\documentclass[a4paper,11pt]{article}
\usepackage[utf8x]{inputenc}
\usepackage[T1]{fontenc}
\usepackage{latexsym,amsfonts,amssymb,mathrsfs,stmaryrd}
\usepackage{amsmath,amsthm,amsrefs}
\usepackage{thmtools}
\usepackage[all]{xy}
\usepackage{color}
\usepackage{enumerate}
\usepackage[pdftex]{graphicx}
\usepackage[pdftex,
colorlinks=true,
linkcolor=blue,
citecolor=green,
hyperindex,
plainpages=false,
bookmarks=true,
bookmarksopen=true,
bookmarksnumbered,]{hyperref}

\theoremstyle{plain}
\newtheorem{theorem}{Theorem}[section]
\newtheorem{corollary}[theorem]{Corollary}
\newtheorem{lemma}[theorem]{Lemma}
\newtheorem{proposition}[theorem]{Proposition}
\newtheorem{remark}[theorem]{Remark}

\theoremstyle{definition}
\newtheorem{definition}[theorem]{Definition}
\theoremstyle{remark}

\newtheorem*{theorem*}{Theorem \ref{mainthm}}

\numberwithin{equation}{section}

\renewcommand{\H}{\mathcal{H}}

\newcommand{\midd}{\;\middle|\;}
\newcommand{\diam}{{\rm diam}}
\newcommand{\dist}{{\rm dist}}
\newcommand{\Lip}{{\rm Lip}}
\newcommand{\id}{{\rm id}}

\begin{document}
\title{H\"older regularity at the boundary of two-dimensional sliding almost minimal sets}
\author{Yangqin FANG}
\newcommand{\Addresses}{{
\bigskip
\footnotesize

Yangqin FANG, \textsc{Laboratoire de Math\'ematiques d'Orsay, 
Universit\'e Paris-Sud (XI) 91405, Orsay Cedex, France}\par\nopagebreak
\textit{E-mail address}: \texttt{yangqin.fang@math.u-psud.fr}

%
}}

\date{}
\maketitle
\begin{abstract}
	In \cite{Taylor:1976}, Jean Taylor has proved a regularity theorem 
	away from boundary for Almgren almost minimal sets of dimensional two in 
	$\mathbb{R}^{3}$. It is quite important for understanding the soap films and
	the solutions of Plateau's problem away from boundary. In this paper, we 
	will give a regularity result on the boundary for two dimensional sliding 
	almost minimal sets in $\mathbb{R}^{3}$. It will be of use for understanding
	their boundary behavior.
\end{abstract}

\section{Introduction}
In \cite{David:2013,David:2012}, Guy David proposed to consider the Plateau
Problem with sliding boundary conditions. That is, given a closed set
$B\subset\mathbb{R}^{n}$, and an initial closed set
$E_0\supset B$, we aim to find a competitor $E$ such that 
$\H^d(E\setminus B)$ attains the infimum among all the competitors of $E_0$, 
where $d$ is an integer between $0$ and $n$. The sliding conditions 
seem very natural to Plateau's problem (or soap films). One of the advantages 
is that it may be easier to prove some regularity at the boundary. 
In fact, paper \cite{David:2014} paves the way to show the regularity. 

In the recent papers \cite{DGM:2014,DDG:2015}, the authors have proposed a direct
approach to the Plateau problem with sliding boundary conditions. Eventually
they proved an existence result. Which is that when $B$ is a closed set with
$\H^d(B)=0$, then there exists (at least) a sliding minimizer for the Plateau
problem with sliding boundary conditions.

The aim of the present paper is to study the regularity of the sliding minimizer.
In \cite{Taylor:1976}, Jean Taylor proved that any $2$-dimensional reduced 
almost minimal set in $\mathbb{R}^{3}$ is locally $C^1$-diffeomorphic to a 
minimal cone. Here we hope to prove a similar regularity result at boundary for 
sliding almost minimal set. That is, at the boundary it is locally 
$C^1$-diffeomophic to a sliding minimal cone. But unfortunately we do not 
prove the $C^1$-diffeomophic equivalence at this time. In this paper, we show
that if $E\subset\mathbb{R}^{3}$ is a reduced $2$-dimensional set with a 
sliding boundary condition given by a smooth $2$-dimensional surface, then 
$E$ is locally biH\"older equivalent to a sliding minimal cone.    

In \cite{David:2009} and \cite{David:2008}, Guy David has given a
new, more detailed, proof of a good part of Jean Taylor's regularity theorem 
for Almgren almost minimal sets of dimensional 2 in $\mathbb{R}^{3}$, and
generalized it to $\mathbb{R}^{n}$. At the same time, he proved a theorem
of almost monotonicity of density for almost minimal sets away from the
boundary. In fact, his proof of H\"older regularity relies on
the theorem of almost monotonicity of density and a Reifenberg parameterization.
In \cite{David:2014}, he established a very similar result as in
\cite{David:2009}, a theorem of almost monotonicity of density at the boundary
for sliding almost minimal sets. This allow us to prove the H\"older regularity 
of these sets on the boundary in some case. At the end of the paper, we will 
discuss how to use regularity result blow to prove some existence results.

Let us begin with some notation and definitions. A gauge function is a 
nondecreasing function $h:[0,\infty]\to[0,\infty]$ with $\lim_{t\to 0} h(t)=0$.
Let $\delta>0$ and an open set $U\subset\mathbb{R}^{n}$ be given.
A $\delta$-deformation in $U$ is a family of maps 
$\{ \varphi_t \}_{0\leq t\leq 1}$ from $U$ into itself such that 
\[
\varphi_1\text{ is Lipschitz and }\varphi_0=\id_{U},
\]
the function 
\[
[0,1]\times U\to U, (t,x)\mapsto \varphi_{t}(x)
\]
is continuous, $\widehat{W}$ is relatively compact in $U$ and
$\diam(\widehat{W})<\delta$, where
\begin{equation}\label{eq:0}
	\widehat{W}=\bigcup_{t\in [0,1]}\left( W_{t}\cup \varphi_t(W_t) \right), \ 
	W_t=\{ x\in U;\varphi_t(x)\neq x \}.
\end{equation}

We say that a relatively closed $d$-dimensional set $E\subset U$ is
$(U,h)$-almost-minimal if it satisfies
\[
\H^d(E\cap W_1)\leq \H^d(\varphi(E\cap W_1))+h(\delta)\delta^d,
\]
for any $\delta$-deformation $\{ \varphi_t \}_{0\leq t\leq 1}$. In
\cite{Taylor:1976}, Jean Taylor proved that if $U$ is an open set in
$\mathbb{R}^{3}$, $E$ is a reduced $(U,h)$-almost-minimal set and $h(r)\leq
cr^{\alpha}$, then for any $x\in U$, there is a small neighborhood of $x$ 
contained in $U$ and in this neighborhood, $E$ is $C^1$ diffeomorphic to a
$2$-dimensional minimal cone, while $2$-dimensional minimal cones are planes,
cones of type $\mathbb{Y}$ and cones of type $\mathbb{T}$. 

In this paper, we concentrate on boundary regularity, and always consider the
following sliding boundary conditions. Let $\Omega\subset\mathbb{R}^{n}$ be a
closed domain in $\mathbb{R}^{n}$. Let $L_1$ be a closed sets (it will be
consider as the sliding boundary).
\begin{definition}
	Let $U$ be an open set. For $\delta>0$, we say that a one parameter family 
	$\{ \varphi_t \}_{0\leq t\leq 1}$ of maps from $U$ into itself is a
	$\delta$-sliding-deformation in $U$, if it satisfies the following properties:
	$\varphi_0=\id_{U}$, $\varphi_1$ is Lipschitz,
	$(t,x)\mapsto \varphi_t(x)$ is continuous on $[0,1]\times U$,
	$\varphi_t(x)\in L_1$ for any $x\in L_1$ and any $t\in [0,1]$, 
	$\widehat{W}$ is relatively compact in $U$ and $\diam(\widehat{W})<\delta$.
\end{definition}

Let $E\subset \Omega$ be closed in $\Omega$; we say that a closed subset
$F\subset\Omega$ is a competitor of $E$ in $U$, if $F=\varphi_1(E)$ 
for some sliding deformation $\{ \varphi_t \}_{0\leq t \leq 1}$ in $U$.

\begin{definition}
	Let $E\subset\Omega$ be closed in $U$. We say that $E$ is
	$(U,h)$-sliding-almost-minimal, 
	if for each $\delta>0$ and each $\delta$-sliding-deformation 
	$\{\varphi_t\}_{0\leq t\leq 1}$, we have 
	\begin{equation}\label{eq:almostminimal}
		\H^d(E\cap W_{1})\leq\H^d(\varphi_{1}(E\cap
		W_{1}))+h(\delta)\delta^{d},
	\end{equation}
	where $W_1=\{ x\in U;\varphi_{1}(x)\neq x \}$.

	We say that $E$ is an $A_{+}$-sliding-almost-minimal set in $U$ if under the
	same circumstances,
	\[
	\H^d(E\cap W_1)\leq (1+h(\delta))\H^d(\varphi_1(E\cap W_1)).
	\]
\end{definition}

In the definition of $(U,h)$-sliding-almost-minimal set, we can replace
inequality \eqref{eq:almostminimal} by the inequality 
\[
\H^d(E\setminus\varphi_1(E))\leq \H^d(\varphi_1(E)\setminus
E)+h(\delta)\delta^d,
\]
at least if that $L_1$ is not too bed, see \cite{David:2014}.

When $\Omega$, $L_1$ and gauge function $h$ are clear, and $U=\mathbb{R}^{n}$.
For simplicity, we may say that an $(U,h)$-sliding-almost-minimal set $E$ is 
sliding almost minimal (in $\Omega$ with sliding boundary $L_1$). It quite easy
to see that for any $(U,h)$-sliding-almost-minimal set $E$,
$E\setminus L_1$ is $(U\setminus L_1,h)$-almost-minimal. 

We say that $E$ is (sliding) minimal in $U$ if
it is (sliding) almost minimal with gauge function $h=0$, that is, 
\[
\H^d(E\cap W_1)\leq \H^d(\varphi_1(E\cap W_1))
\]
or
\[
\H^d(E\setminus \varphi_1(E))\leq \H^d(\varphi_1(E)\setminus E)
\]
for any (sliding) deformation $\{ \varphi_t \}_{0\leq t\leq 1}$ in $U$.

We say that a $d$-dimensional set $E$ is reduced if $E=E^{\ast}$, where
\[
E^{\ast}=\{ x\in E\mid \H^d(E\cap B(x,r))>0 \text{ for every }r>0\}.
\]
We can prove that 
\[
\H^d(E\setminus E^{\ast})=0
\]
and that $E^{\ast}$ is also (sliding) almost minimal when $E$ is (sliding)
almost minimal, see for instance \cite{David:2009, David:2014}. In this paper,
we always assume that a sliding almost minimal set is reduced.

For any set $E$, any point $x\in E$ and any radius $r>0$,
we set 
\[
\theta_E(x,r)=\frac{\H^d(E\cap B(x,r))}{\omega_d r^d},
\]
where $\omega_d$ denote the Hausdorff measure of $d$-dimensional unity ball.
If the limit 
\[
\lim_{r\to 0}\theta_E(x,r)
\]
exists, we will denote it by $\theta_E(x)$, and call it the density
of $E$ at the point $x$. When $E$ is given, and there is no danger of confusion,
we may drop the subscript $E$ and denote it by $\theta(x)$. A property of almost
monotonicity of density for sliding almost minimal set will be often used.
That is, Proposition 5.27 in \cite{David:2009} and Theorem 28.7 in
\cite{David:2014}. We now put them together, it can be stated rough as follows:
\begin{equation}\label{eq:AMDS}
	\theta_E(x,r)e^{\lambda A(r)} \text{ is a nondecreasing function of }r,
\end{equation}
when $r$ small, where $E$ is a sliding almost minimal set with gauge function
$h$, and $A(r)=\int_{0}^{r}h(2t)\frac{dt}{t}$. Let's refer to
\cite{David:2014} and \cite{David:2009} for more detailed statement.
We get that from \eqref{eq:AMDS} that when $E$ is a sliding almost minimal set,
for any $x\in E$, the density $\theta_E(x)$ exists. 

A blow-up limit of a set $E$ at $x\in E$ is any closed set in $\mathbb{R}^{n}$
that can be obtained as the limit of a sequence $\{r_k^{-1}(E-x)\}$ with
$\lim_{k\to\infty}r_k=0$.

A set $E$ in $\mathbb{R}^{n}$ is called a cone centered at origin $0$ if for 
any $x\in E$ and any $t\geq 0$, $tx\in E$. In general, a cone is a translation 
of a cone centered at origin.

Suppose that $E$ is a sliding almost minimal set, and $x\in E$. If $x$ is not
contained in the sliding boundary, then any blow-up limit of $E$ at $x$ is a
minimal cone in $\mathbb{R}^{n}$, see \cite{David:2009}; if $x$ is in the
sliding boundary, then any blow-up limit of $E$ at $x$ is a sliding minimal
cone, see \cite{David:2014}. We also refer to \cite{David:2009} and
\cite{David:2014} for the basic properties of blow-up limits.

The main theorem of the paper is following:
\begin{theorem*}
	Let $\Sigma\subset\mathbb{R}^{3}$ be a connected closed set such
	that the boundary $\partial\Sigma$ is a two-dimensional $C^1$ submanifold. 
	Suppose that $x$ is a point in $\partial\Sigma$, $U$ is a neighborhood of
	$x$, $E\subset \Sigma$ is an $(U,h)$-sliding-almost-minimal set with sliding
	boundary $\partial\Sigma$ and $E\supset \partial\Sigma$. Then for each small
	$\tau>0$, we can find a radius $\rho>0$, a sliding minimal cone $Z$ in 
	$\Omega$ with sliding boundary $L_1$ and a biH\"older map 
	$\phi: B(x,3\rho/2)\cap \Omega\to B(x,2\rho)\cap \Sigma$ such that
	\[
		\begin{gathered}
			\phi(z)\in \partial\Sigma \text{ for } z\in L_1,\ 
			\| \phi-\id \|_{\infty}\leq 3\tau,\\
			C\left\vert z-y \right\vert^{1+\tau}\leq \left\vert
			\phi(z)-\phi(y) \right\vert\leq C^{-1}\left\vert z-y
			\right\vert^{\frac{1}{1+\tau}} ,\\
			B(x,\rho)\cap \Sigma\subset \phi\left( B\left( x,\frac{3\rho}{2} \right)\cap
			\Omega \right)\subset B(x,2\rho)\cap \Sigma,\\
			E\cap B(x,\rho)\subset \phi\left(Z\cap B\left( x,\frac{3\rho}{2}
			\right)\right)\subset E\cap B(x,2\rho).
		\end{gathered}
	\]
	where $\Omega$ is a half space and $L_1$ is the boundary of $\Omega$.
\end{theorem*}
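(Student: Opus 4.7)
The plan is to adapt Guy David's interior regularity argument from \cite{David:2009} to the sliding boundary setting, using the boundary almost monotonicity of density from \cite{David:2014} in place of its interior analogue. The argument splits into four steps: straightening the $C^1$ boundary, extracting a sliding tangent cone, showing that $E$ is Hausdorff-close to this cone at every small scale, and gluing local comparisons together by a Reifenberg-type iteration that respects the boundary. Pulling back by the straightening diffeomorphism at the end produces the map $\phi$ of the statement.

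First, using the $C^1$ regularity of $\partial\Sigma$, I would construct a local $C^1$ diffeomorphism $\Psi$ sending a neighborhood of $x$ in $\Sigma$ to one in a half-space $\Omega$, with $\Psi(\partial\Sigma)=L_1=\partial\Omega$. Since $D\Psi$ is continuous and $D\Psi(x)$ can be taken orthogonal, the pushforward $\Psi(E)$ is sliding almost minimal in $\Omega$ with boundary $L_1$ for a slightly enlarged but still Dini-integrable gauge function, the distortion being controlled by the modulus of continuity of $D\Psi$ at $x$. In these flat coordinates \eqref{eq:AMDS} ensures that $\theta_E(x)$ exists, and the usual compactness theorem for sliding almost minimal sets (see \cite{David:2014}) yields a sliding minimal cone $Z\subset\Omega$ as a blow-up limit of $E$ at $x$ whose density at the origin equals $\theta_E(x)$.

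The central quantitative step is to upgrade this subsequential convergence to uniform approximation by the single cone $Z$ at every small scale. A compactness-contradiction argument, analogous to the one in \cite{David:2009}, shows that if the normalized Hausdorff distance $d_{x,r}(E,x+Z)$ did not tend to zero as $r\to 0$, one could extract a second blow-up limit $Z'\neq Z$ realizing the same density $\theta_E(x)$; since the list of sliding minimal cones in a half-space of $\mathbb{R}^3$ with a prescribed density is discrete (inherited from Taylor's classification plus the additional boundary types), only finitely many candidate cones can arise at $x$ and a further refinement pins down $Z$. This produces a modulus $\eta(r)\to 0$, controlled by $h$ and by $\tau$, such that $d_{x,r}(E,x+Z)\le \eta(r)$ for all $r<\rho_0$.

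Finally, I would build $\phi$ by a Reifenberg-type iteration. At each dyadic scale $r_k=2^{-k}\rho$, the closeness estimate gives a small perturbation $\psi_k$ of the identity, supported in $B(x,2r_k)$ and of size $O(\eta(r_k)r_k)$, that pushes $Z\cap B(x,r_k)$ towards $E\cap B(x,r_k)$. Setting $\phi:=\Psi^{-1}\circ\lim_{k\to\infty}\psi_0\circ\psi_1\circ\cdots\circ\psi_k$ yields the bi-Hölder equivalence by the standard Reifenberg estimates, with exponent $1/(1+\tau)$ as soon as $\eta(r)$ decays fast enough, which can be arranged by choosing $\rho$ small in terms of $\tau$. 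The main obstacle is that each $\psi_k$ must preserve the sliding plane $L_1$, which is not automatic in the interior Reifenberg construction; the remedy is to carry out all local projections equivariantly under the reflection across $L_1$, or equivalently inside the closed half-space $\Omega$, so that $\psi_k(L_1)\subset L_1$ at every stage and consequently $\phi(L_1)\subset \partial\Sigma$.
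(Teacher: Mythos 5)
Your overall architecture (blow-up cone, quantitative closeness at small scales, Reifenberg iteration made equivariant under reflection across $L_1$, straightening of $\partial\Sigma$ by a $C^1$ chart) matches the paper's in outline, but two of your steps have genuine gaps.

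First, your ``central quantitative step'' asserts that $d_{x,r}(E,x+Z)\to 0$ for a \emph{single} cone $Z$, deduced by compactness from the claim that the list of sliding minimal cones with a prescribed density is discrete. That claim is false: all cones of type $\mathbb{Y}_{+}$ centered at $x$ (and likewise $L_1\cup\mathbb{Y}_{+}$) have the same density and form a continuous one-parameter family of rotations about the axis through $x$ perpendicular to $L_1$. Discreteness of the possible density \emph{values}, together with connectedness of the set of blow-up limits, fixes only the \emph{type} of the cone, not the cone itself; uniqueness of the blow-up is not available at this level of generality and is not what the paper proves. The paper instead invokes David's Proposition 30.19 (its Lemma \ref{mainlemma}) to get, for each scale $\rho$, a possibly different cone $Z_x^{\rho}$ within $\tau$ of $E$, together with the crucial measure estimate $\vert\H^2(E\cap B(y,t))-\H^2(Z_x^{\rho}\cap B(y,t))\vert\le\tau\rho^2$; the single cone $Z$ in the conclusion is then produced by the Reifenberg theorem itself, not by a uniqueness argument.

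Second, and more seriously, your iteration only uses closeness to a cone at the center $x$. The Reifenberg-type theorem (Corollary \ref{co:PT} in the paper) requires a two-parameter family of approximations: for \emph{every} $y\in E\cap B(x,3r)$ and \emph{every} scale $t$, a minimal cone (of Taylor type when $t<\dist(y,L_1)$, a sliding minimal cone when $t\ge\dist(y,L_1)$) within $\varepsilon$ of $E$ in $B(y,t)$. Establishing this at nearby boundary points and in the transition regime $t\approx\dist(y,L_1)$ is the bulk of the paper's work: one must first rule out the degenerate blow-up $L_1$ at nearby points (Lemma \ref{le:D1}), pass to $F=\overline{E\setminus L_1}$ and show it is still sliding almost minimal (Lemma \ref{le:removeboundary}), control how interior points of $E$ sit over $E\cap L_1$ (Lemma \ref{le:BDAP}), and then run a density case analysis at every nearby point (Lemmas \ref{lemmaplane}--\ref{lemmay2}), using the measure estimates of Lemma \ref{mainlemma} to pin down the density, hence the cone type, at those points. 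None of this appears in your proposal, and without it the perturbations $\psi_k$ cannot be constructed. You also take for granted the classification of two-dimensional sliding minimal cones containing $L_1$ (the paper's Theorem \ref{thm:MCCB}), which is itself one of the main results being proved rather than a citable fact. The reflection trick for preserving $L_1$ in the Reifenberg construction, and the final transfer through the $C^1$ chart, are in the right spirit and agree with the paper.
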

The list of $2$-dimensional sliding minimal cones which contain $L_1$ is not
complicated, which is following: $L_1$, cones $L_1\cup Z$, where $Z$ is a cone
of type $\mathbb{P}_{+}$ or $\mathbb{Y}_{+}$. See Section \ref{se:2dimcones}
for precise definitions and see Theorem \ref{thm:MCCB} for a precise
statement.

It seems to be a reasonable
condition for soap film that $E\supset \partial\Sigma$. In soap film
experiments, if we dip a shape of wire into some soapy water, when we pull it
out we shall obtain a surface created by the soap film. The wire is considered
as the sliding boundary, and the surface is consider as a sliding almost
minimal set. Actually, this surface seems to contain the wire. Thus the 
assumption $E\supset\partial\Sigma$ seems natural to the author. 

It would be also very interesting to consider the regularity at the boundary
of sliding almost minimal sets which do not necessarily contain the boundary. 
But unfortunately, without the assumption $E\supset
\partial\Sigma$, we do not have a satisfactory result. Because in this case, the
blow-up limits of $E$ at a point $x\in E\cap\partial\Sigma$ could be cones of
type $\mathbb{T}_{+}$ or cones of type $\mathbb{V}$. See Section
\ref{se:2dimcones} for a precise definitions of these cones. When a blow-up 
limit is a cone of type $\mathbb{V}$, we will meet trouble (Figure \ref{fig:buv} is
an example of potential soap film for which regularity seems difficult to
prove).
\begin{figure}[!htb]
	\centering
	\includegraphics[width=5in]{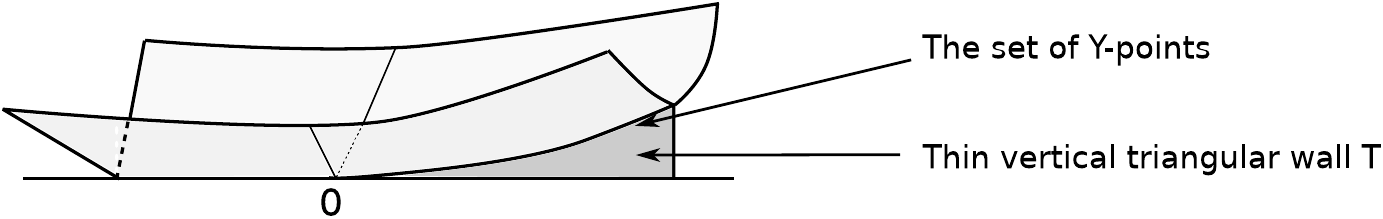}
	\caption{blow-up limit at $0$ is a cone of type $\mathbb{V}$}
	\label{fig:buv}
\end{figure}

\section{One dimensional sliding minimal sets in a half
plane}\label{se:1dimcones}
In this section we discuss one dimensional sliding minimal sets in a half plane. 
We discuss the one dimensional case, because it is very easy, and the list of 
one dimensional sliding cones will be used to classify the two dimensional 
sliding minimal cones in a half space. For simplicity, we assume that
\begin{equation}\label{eq:domainboundary}
	\begin{gathered}
		\Omega=\{ (x,y)\in \mathbb{R}^{2}\mid y\geq 0 \},\\
		L_1=\{ (x,0)\in\mathbb{R}^{2}\mid x\in \mathbb{R} \}.\\
	\end{gathered}
\end{equation}

For any $t\in\mathbb{R}$, and any $\alpha\in (0,\frac{\pi}{2})$, we set 
\[
P_t=\{ (t,y)\mid y\geq 0 \}\] 
and 
\[
V_{\alpha,t}=\{ (x,y): y=\left\vert x\tan\alpha+t \right\vert\}.
\] 
It is very easy to see that the set $P_t$ and $P_t\cup L_1$ are sliding
minimal. It is also not hard to see that $V_{\alpha,t}$ is minimal if and
only if $0<\alpha\leq\frac{\pi}{6}$.

\begin{lemma}\label{listofonedimensionalminimalcone}
	Let $\Omega$ and $L_1$ be as in \eqref{eq:domainboundary}. Suppose that 
	$E$ is a minimal cone in $\Omega$ with sliding boundary $L_1$ centered at 
	$0$. Then $E$ is one of $L_1$, $P_0$, $P_0\cup L_1$ and $V_{\alpha,0}$ for
	some $\alpha\in (0,\frac{\pi}{6})$.
\end{lemma}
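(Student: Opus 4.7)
The plan is to classify $E$ by first determining its ray structure at $0$, then using a first-order balance along $L_1$ to restrict the allowed angle sets, and finally eliminating the residual non-minimal configurations with explicit competitors.

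Since $E$ is a cone at $0$ contained in $\Omega$, it must be a union of closed rays from $0$. Away from $L_1$, the set $E\setminus L_1$ is a $1$-dimensional interior minimal cone in $\mathbb{R}^2$, which forces it to consist of disjoint open half-lines emanating from $0$ (interior $1$-dimensional minimal cones are lines, rays, and $\mathbb{Y}$'s, and a cone centered at $0$ can only branch at $0$). The intersection $E\cap L_1$ away from $0$ lies locally on a line in $L_1$, so it reduces to some subset of the two $L_1$-rays. Almost-monotonicity of density at the boundary (Theorem 28.7 of \cite{David:2014}) bounds $\theta_E(0)$, and hence the number of rays, giving a finite angle set $S\subset[0,\pi]$.

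I then apply a family of horizontal shear sliding deformations $\varphi^s(x,y)=(x+s\chi(x,y),y)$ supported near $0$, where $\chi$ is a smooth bump with $\chi(0)=1$. Since $y$ is preserved, $\varphi^s(L_1)\subset L_1$, so these are valid sliding deformations. Sliding minimality applied to $\pm s$ forces the first derivative of $\mathcal{H}^1$ of the image at $s=0$ to vanish; computing this variation ray by ray (and noting that two coexisting $L_1$-rays cancel each other) yields the balance
\[
\sum_{\theta\in S}\cos\theta = 0.
\]
Enumerating the solutions in $[0,\pi]$ gives the candidate configurations $L_1$, $P_0$, $P_0\cup L_1$, $V_{\alpha,0}$ for $\alpha\in(0,\pi/2)$, together with ``decorated'' candidates such as $L_1\cup V_{\alpha,0}$ and configurations with three or more upper rays.

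For the $V_{\alpha,0}$ family I use a Y-competitor that lifts the junction at $0$ to $(0,h)$ with $h\ge 0$ while fixing the far endpoints; the image has length $h+2\sqrt{R^2\cos^2\alpha+(R\sin\alpha-h)^2}$, and optimizing in $h$ shows the minimum beats $2R$ exactly when $\alpha>\pi/6$. Hence $V_{\alpha,0}$ is minimal iff $\alpha\le\pi/6$. To eliminate the decorated configurations such as $L_1\cup V_{\alpha,0}$, I use a collapse deformation $\varphi_1(x,y)=(x,(1-\eta(x,y))y)$ with $\eta\equiv 1$ on $B(0,R)$ and dropping sharply to $0$ across a shell of width $\varepsilon$. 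This fixes $L_1$ pointwise and projects each upper ray inside $B(0,R)$ vertically onto $L_1$, with near-vertical connector curves of total length $\approx 2R\sin\alpha$ in the shell; the resulting image $\varphi_1(E)$ has $\mathcal{H}^1\approx 2R(1+\sin\alpha)$ inside $B(0,R+\varepsilon)$, strictly less than the original $\mathcal{H}^1(E)\approx 4R$ whenever $\alpha<\pi/2$. Using the equivalent set-difference formulation of sliding minimality, valid for smooth $L_1$ per the remark after \eqref{eq:almostminimal}, this violates minimality. The remaining cases $L_1$ and $P_0\cup L_1$ are verified minimal directly: any sliding competitor must cover $L_1\cap B(0,R)$ by continuity and boundary-fixing of $\varphi_1|_{L_1}$, and any continuous image of $P_0$ from $L_1$ to $(0,R)$ has $\mathcal{H}^1\ge R$ by projecting to the $y$-axis.

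The main obstacle is ruling out $L_1\cup V_{\alpha,0}$ for small $\alpha$: the first-order balance is satisfied and the Y-competitor that keeps $L_1$ fixed produces exactly the same length as the original, failing to detect non-minimality. The resolution requires the collapse competitor above, which exploits $L_1\subset E$ to absorb the upper rays ``for free''; the resulting saving is detected only through the set-difference formulation of sliding minimality.
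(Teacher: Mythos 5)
Your overall architecture is genuinely different from the paper's: you derive the tangential stationarity condition $\sum_{\theta\in S}\cos\theta=0$ from horizontal shears preserving $L_1$, then clean up with explicit competitors, whereas the paper runs a direct case analysis on $\#\bigl(E\cap\partial B(0,1)\bigr)$ with reflection and collapse competitors. Your Y-lift computation for $V_{\alpha,0}$ (forcing $\alpha\le\pi/6$) and your vertical-collapse competitor for $L_1\cup V_{\alpha,0}$ (which only produces a saving in the set-difference formulation, because the projected horizontal segments are absorbed by $L_1\subset E$) are both correct, and in fact parallel what the paper does in its Cases 2 and 4; the paper's competitors there rely on exactly the same set-difference accounting.

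There is, however, a genuine gap: the cosine balance alone does not reduce to your candidate list, and your two competitor constructions do not cover what is left over. For instance, three rays in the open upper half-plane at angles $\pi/3$, $\pi/2$, $2\pi/3$ (cosines $1/2$, $0$, $-1/2$) satisfy the balance, as does one boundary ray plus two upper rays with $1+\cos\theta_1+\cos\theta_2=0$. These configurations contain neither all of $L_1$ (so the collapse competitor gains nothing: the projected horizontal segments are not absorbed, and $\cos\theta+\sin\theta\ge 1$) nor a symmetric pair of rays (so the Y-lift as you set it up does not apply). You list "configurations with three or more upper rays" among the candidates but never eliminate them. The missing ingredient is the $2\pi/3$ angular separation between any two rays of $E$ not contained in $L_1$ --- the paper's very first step, quoted from Lemma 10.2 in \cite{David:2009}; it remains valid at the origin because the competitor merging two rays at mutual angle $<2\pi/3$ into a Y whose Fermat junction lies in the open upper half-plane keeps $0$ fixed, hence is a legitimate sliding deformation. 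With that separation at most two interior rays can coexist and your enumeration closes. Two smaller points: the claim that $V_{\alpha,0}$ is minimal \emph{if and only if} $\alpha\le\pi/6$ overstates what your single competitor proves (only the "only if" direction follows, and only that direction is needed); and your conclusion reaches the closed endpoint $\alpha\le\pi/6$, which agrees with the paper's proof though not with the open interval in its statement.
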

\begin{proof}
	Let $K=E\cap \partial B(0,1)$. We note that $K$ is a finite set because otherwise 
	$\H^1(E\cap B(0,1))=\infty$. Write $K=\{ a_1,\cdots, a_n \}$, and denote by
	$l_i$ the ray form $0$ through the point $a_i$. Suppose
	$a_i,a_j\in \Omega\setminus L_1$, $i\neq j$. Similarly to (10.3) in Lemma 10.2
	in \cite{David:2009}, we can get that 
	\[
	{ \rm Angle }(l_i,l_j)\geq \frac{2\pi}{3}.
	\]
	Therefore, there are at most four point in $K$.

	Case 1, if there is only one point in $K$, i.e. $K=\{ a_1 \}$. It is easy to
	see that $a_1\neq (1,0)$ and $a_1\neq (-1,0)$. If $a_1=(0,1)$, it is very easy
	to see that $E$ is minimal. If $a_1\neq (0,1)$, we put $a_1=(x,y)$, then 
	\[
	E'=\{ (x,ty)\mid 0\leq t \leq 1 \}\cup \{ (tx,ty)\mid t\geq 1 \}
	\]
	is a competitor of $E$, and $\H^1(E')<\H^1(E)$. Then $E$ could not be minimal.
	In this case, $E=P_0$ is a ray which is perpendicular to $L_1$.

	Case 2, there are two points in $K$, i.e. $K=\{ a_1,a_2 \}$. If $a_1=(-1,0)$
	and $a_2=(1,0)$, then $E=L_1$. If $a_1=(-1,0)$ and $a_2\neq (1,0)$, then
	\[
	E''=\left(E\setminus B(0,1)\right)\cup [a_1,a_2]
	\]
	is a competitor of $E$ and $\H^1(E'')<\H^1(E)$. Then $E$ could not be minimal.
	If $a_2=(1,0)$ and $a_1\neq (-1,0)$, for the same reason as before, $E$ is
	not minimal. If $a_1,a_2\not\in \{ (-1,0),(1,0) \}$, we put 
	$a_1=(\cos\alpha_1,\sin\alpha_1)$, $a_2=(\cos\alpha_2,\sin\alpha_2)$ and 
	$\widetilde{a}_2=(\cos\alpha_2,-\sin\alpha_2)$ with $0<\alpha_2<\alpha_1<\pi$,
	then 
	\[
	\H^1([a_1,0]\cap [0,a_2])\geq \H^1([a_1,\widetilde{a}_2]),	
	\]
	and with equality if and only if $\alpha_1+\alpha_2=\pi$. It means that when
	$a_2\neq (-\cos\alpha_1,\sin\alpha_1)$, $E$ could not be minimal. We now
	suppose that $a_2= (-\cos\alpha_1,\sin\alpha_1)$. Then $E=V_{\alpha_1,0}$,
	and $0<\alpha_1\leq \frac{\pi}{6}$ because $E$ is minimal.

	Case 3, there are three point in $K$. Write $K=\{ a_1,a_2,a_3 \}$,
	$a_1=(x_1,y_1)$, $a_2=(x_2,y_2)$ and $a_3=(x_3,y_3)$, $x_1<x_2<x_3$. If
	$x_1\neq -1$ and $x_3\neq 1$, then 
	\begin{equation}\label{eq:05}
		\mbox{Angle}(l_1,l_2)\geq \frac{2\pi}{3}
		\mbox{ and } \mbox{Angle}(l_2,l_3)\geq \frac{2\pi}{3};
	\end{equation}
	that is impossible. If $x_1=-1$ and $x_3\neq 1$, then 
	\[
	\mbox{Angle}(l_2,l_3)\geq \frac{2\pi}{3},
	\]
	thus $-1<x_2<-\frac{\sqrt{3}}{2}$. We can see that 
	\[
	E'''=\left( E\setminus B(0,1) \right)\cup [0,a_1]\cup [(x_2,0),a_2]\cup [0,a_3]
	\]
	is a competitor of $E$, and $\H^1(E''')<\H^1(E)$, thus $E$
	could not be minimal. Similarly, we can see that we cannot have $x_1\neq 1$
	and $x_3=-1$. We now suppose that $x_1=-1$ and $x_3=1$, i.e. $a_1=(-1,0)$ and
	$a_3=(1,0)$. If $x_2\neq 0$, then 
	\[
	E''''=L_1\cup \{ (x_2,ty_2)\mid 0\leq t\leq 1 \}\cup \{ (tx_2,ty_2)\mid t\geq 1 \}
	\]
	is a competitor of $E$, and $\H^1(E'''')<\H^1(E)$, $E$ is not
	minimal, impossible! If $x_2=0$, then $a_2=(0,1)$. That is, $E=P_0\cup L_1$.

	Case 4, there are four point in $K$. If there are at least three point in
	$\partial B(0,1)\cap \Omega\setminus L_1$, similarly to \eqref{eq:05}, that is
	impossible. Thus there at most two point in 
	$\partial B(0,1)\cap \Omega\setminus L_1$, so there are exactly two point in
	$\partial B(0,1)\cap \Omega\setminus L_1$ and exactly two point in $\partial
	B(0,1)\cap \Omega\cap L_1$. We put $K=\{ a_1,a_2,a_3,a_4 \}$, $a_1=(-1,0)$, 
	$a_2=(x_2,y_2)$, $a_3=(x_3,y_3)$, $a_4=(1,0)$, $\widetilde{a}_{2}=(x_2,0)$ and
	$\widetilde{a}_{3}=(x_3,0)$. Then  
	\[
	\H^1([a_2,0]\cap [0,a_3])> \H^1([a_2,\widetilde{a}_2])+\H^1([a_3,\widetilde{a}_3]),
	\]
	which means that $E$ could not be minimal.
\end{proof}

\begin{proposition}\label{listofonedimensionalminimalset}
	Let $\Omega$ and $L_1$ be as in \eqref{eq:domainboundary}. Suppose that $E$ is a 
	sliding minimal set in $\Omega$ with sliding boundary $L_1$, and $E\supset L_1$. 
	Then either $E=L_1$ or $E=P_t\cup L_1$ for some $t\in \mathbb{R}$. 
\end{proposition}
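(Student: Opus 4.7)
My plan is to combine the monotonicity of density with the cone-classification of Lemma~\ref{listofonedimensionalminimalcone}. By monotonicity, $\theta_E(0,\infty) := \lim_{r\to\infty}\theta_E(0,r)$ exists, and any Hausdorff limit of $E/R$ as $R\to\infty$ is a sliding minimal cone centered at the origin and containing $L_1$. By the Lemma such cones are exactly $L_1$ and $L_1\cup P_0$, so $\theta_E(0,\infty)\in\{1,3/2\}$; moreover, comparing balls centered at $0$ and at $x$, one gets $\theta_E(x,\infty)=\theta_E(0,\infty)$ for every $x$.

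In the case $\theta_E(0,\infty)=1$, monotonicity gives $\theta_E(x,r)\leq 1$ everywhere, while $L_1\subset E$ gives $\theta_E(x,r)\geq 1$ for every $x\in L_1$. Hence $\theta_E(x,r)=1$ for every $x \in L_1$ and every $r > 0$, which forces $\H^1((E\setminus L_1)\cap B(x,r))=0$ for $x\in L_1$; covering $\Omega$ by such balls and using reducedness, $E=L_1$.

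In the case $\theta_E(0,\infty)=3/2$, the plan is to produce a point $s\in L_1$ with $\theta_E(s)=3/2$. Then $\theta_E(s,r)$ is trapped between $\theta_E(s)$ and $\theta_E(s,\infty)$, both equal to $3/2$, so it is constant in $r$; the equality case of the monotonicity formula forces $E$ to be a cone centered at $s$, and the Lemma gives $E = L_1\cup P_s$ (the option $E=L_1$ is ruled out since now $F := E\setminus L_1\neq\emptyset$). Setting $t$ equal to the first coordinate of $s$ finishes the proof.

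To produce such $s$, I would first check $\bar F\cap L_1\neq\emptyset$: otherwise $F$ would be a nontrivial classical $1$-dimensional Almgren minimal set wholly in the open half-plane, but the only global $1$-dim minimal sets fitting there are horizontal lines (a $\mathbb{Y}$-cone always has a downward ray leaving the half-plane), and $L_1$ union a horizontal line has blow-down density $1$, contradicting case B. For any $s\in\bar F\cap L_1$, the local structure theorem for $1$-dim minimal sets shows $F$ near $s$ is a finite union of straight segments; some segment must have $s$ as an endpoint (other segments are at positive distance from $s$). The tangent direction $v$ of such a segment at $s$ has $v_2>0$ since the segment lies in the open half-plane; the segment contributes a ray in the blow-up cone at $s$, which must be $L_1$ or $L_1\cup P_s$ by the Lemma, and the only admissible direction with $v_2>0$ in those cones is the vertical $(0,1)$. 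So the segment is vertical, the blow-up equals $L_1\cup P_s$, and $\theta_E(s)=3/2$. The main obstacle is this structural step, particularly ruling out floating components of $F$ in case B and pinning the tangent at $s$ to be exactly vertical; both rely on classification results beyond the Lemma itself.
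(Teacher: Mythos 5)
Your overall skeleton (blow-down to a sliding minimal cone containing $L_1$, classification of such cones via Lemma \ref{listofonedimensionalminimalcone}, then a monotonicity squeeze at a well-chosen boundary point) is genuinely different from the paper's argument, which instead combines the large-scale cone approximation of Proposition 30.3 in \cite{David:2014} with the Eilenberg/coarea inequality to produce arbitrarily large radii $s$ with $\#\bigl(E\cap\partial B(0,s)\bigr)\leq 3$, and then identifies $E\cap B(0,s)$ directly by explicit competitor constructions. Your Case A ($\theta_E(0,\infty)=1$) is correct and clean. The problem is Case B, specifically the production of a point $s\in L_1$ with $\theta_E(s)=\frac{3}{2}$; as you yourself note, this is where the work is, and the sketch does not close it.

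Two sub-steps fail as written. First, to rule out $\overline{F}\cap L_1=\emptyset$ you appeal to a classification of ``global'' one-dimensional minimal sets sitting in the open half-plane, but $F=\overline{E\setminus L_1}$ is only known to be minimal \emph{in the open half-plane}, which is strictly weaker than being a global Almgren minimal set of $\mathbb{R}^2$ (for instance, an open vertical ray is minimal in $\{y>0\}$ and is neither a line nor a $\mathbb{Y}$); moreover $\overline F\cap L_1=\emptyset$ does not give $\dist(F,L_1)>0$, and even for the model case $F=\{y=1\}$ your stated contradiction is off: the blow-down of $L_1\cup\{y=1\}$ has density $2$, not $1$ (the limit \emph{set} is $L_1$, but densities pass to the limit only via the measure convergence for minimal sets, which you do not invoke). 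Ruling out such floating pieces really requires either a competitor (project the line onto $L_1$) or the semicontinuity of measure under blow-down. Second, the assertion that near $s\in\overline F\cap L_1$ the set $F$ is a finite union of segments, one of which has $s$ as an endpoint and the rest at positive distance from $s$, presupposes exactly the boundary structure you are trying to prove; interior regularity gives no control on how segments of $F$ accumulate at a point of $L_1$. A repairable route is: for $z\in F$ close to $s$, local Ahlfors regularity of $F$ gives $\theta_E(s,2|z-s|)\geq 1+c$, hence $\theta_E(s)>1$ by monotonicity, and the blow-up classification (Lemma \ref{listofonedimensionalminimalcone} plus $L_1\subset E$) then forces $\theta_E(s)=\frac32$; but this still leaves the first gap, and your final step also needs the equality case of the monotonicity formula at a boundary point (constant density implies $E$ is a cone), i.e.\ Theorem 28.4 in \cite{David:2014}, which should be cited explicitly.
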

\begin{proof}
	For $r>0$, we put $E_r=\frac{1}{r}E$, $A_r=\Omega\cap \overline{B(0,r)}$ 
	and $S_r=\Omega\cap \partial B(0,r)$.

	We claim that there exists a sequence $\{ r_n \}$, such that $r_n\to \infty$
	and there are at most three point in $E\cap S_{r_n}$. 

	Since $E$ is sliding minimal, we have that $\theta_{E}(0,r)$ is nondecreasing and
	bounded, see \cite[Theorem 28.4]{David:2014}. Thus for any $\varepsilon>0$, 
	we can find $r_{\varepsilon}>0$ such 
	that $\theta_{E}(0,r)\geq \theta_{E}(0,\infty)-\varepsilon$ for $r\geq
	r_\varepsilon$, where we denote $\theta_{E}(0,\infty)=\lim\limits_{r\to 
	\infty}\theta_{E}(0,r)$. We can easily see that
	$\theta_{E_r}(0,t)=\theta_{E}(0,rt)$. If we take $r>2r_{\varepsilon}$, then 
	\[
	\theta_{E_r}(0,t)=\theta_{E}(0,rt)\geq
	\theta_{E}(0,\infty)-\varepsilon=\theta_{E_r}(0,\infty)-\varepsilon,\
	\forall t\geq \frac{1}{2}.
	\]

	We now let $\tau$ with $0<\tau<\frac{1}{2}$ and $\varepsilon$ be as in 
	Proposition 30.3 in \cite{David:2014}. We take $t_0>2$ and apply Proposition 
	30.3 in \cite{David:2014}, and get that there is a minimal cone $T$ centered
	at $0$ such that 
	\begin{gather}
		\dist(y,T)\leq \tau t_0,\ \mbox{ for } y\in E_r\cap
		B(0,t_0-\tau)\setminus B\left(0,\frac{1}{2}+\tau\right),\notag\\
		\dist(z,E_r)\leq \tau t_0,\ \mbox{ for } z\in T\cap
		B(0,t_0-\tau)\setminus B\left(0,\frac{1}{2}+\tau\right),\notag\\
		\left\vert \H^1(E_r\cap B(y,u))-\H^1(T\cap B(y,u)) \right\vert\leq \tau
		t_0\label{subeq:one}\\ 
		\mbox{for any }B(y,u)\subset B(0,t_0-\tau)\setminus
		B\left(0,\frac{1}{2}+\tau\right),\mbox{ and }\notag\\
		\left\vert \H^1(E_r\cap B(0,t))-\H^1(T\cap B(0,t)) \right\vert\leq \tau t,\
		\forall \frac{1}{2}+\tau\leq t\leq t_0-\tau.\label{subeq:two}
	\end{gather}

	If we put $N(t)=\#\left( \partial B(0,t)\cap E \right)$, then by Lemma 8.10
	in \cite{David:2009} or Theorem 3.2.22 in \cite{Federer:1969},
	\[
	\frac{1}{s}\int_{0}^{s}N(t)dt\leq \frac{1}{s}\H^1(E\cap
	B(0,s)).
	\]
	Combining this with \eqref{subeq:two}, we can get
	that, for $\left( \frac{1}{2}+\tau \right)r\leq s\leq (t_0-\tau)r $, 
	\[
	\frac{1}{s}\int_{0}^{s}N(t)dt \leq \frac{1}{s}\H^1(T\cap B(0,s))+\tau\leq
	3+\tau.
	\]
	But $t_0$ can be chosen arbitrarily large, thus we can find a sequence
	$\{s_n\}_{n=1}^{\infty}$ such that $s_n\to \infty$ and $N(s_n)<4$. Since
	$L_1\subset E$, we have that $N(s)\geq 2$ for any $s>0$, thus $2\leq
	N(s_n)\leq 3$.

	If $N(s)=2$ for some $s>0$, by minimality of $E$, we can get that
	\[
	E\cap B(0,s)=L\cap B(0,s).
	\]

	If $N(s)=3$ for some $s>0$, we suppose that 
	\[E\cap \partial B(0,s)=\{ (-s,0),X_s,(s,0) \},\ X=(x_s,y_s).\]
	If the points $X_s$ and $0$ are not in the same component of $E\cap
	\overline{B(0,s)}$, then by minimality of
	$E$, we can see that $X_s$ is the only point in the component of $E\cap
	\overline{B(0,s)}$ which contains the point $X_s$, and 
	\[
	E\cap B(0,s)=L\cap B(0,s).
	\]
	If the points $X_s$ and $0$ are in the same component of $E\cap
	\overline{B(0,s)}$, then there is a path in $E\cap \overline{B(0,s)}$ from 
	$X_s$ to $0$, we denote it by $\gamma:[0,1]\to E\cap \overline{B(0,s)}$ with
	$\gamma(0)=X_s$ and $\gamma(1)=0$. Let 
	\[
	v_0=\inf\{ v\in [0,1]\mid \gamma(v)\in L_1 \}.
	\]
	Then 
	\[
	E'=[X_s,\gamma(v_0)]\cup \left( E\setminus B(0,s) \right)\cup \left( L_1\cap
	B(0,s) \right)
	\]
	and 
	\[
	E''=[(x_s,y_s),(x_s,0)]\cup \left( E\setminus B(0,s) \right)\cup \left( L_1\cap
	B(0,s) \right)
	\]
	are competitors of $E$, and 
	\[
	\H^1(E''\cap B(0,s))\leq \H^1(E'\cap B(0,s))\leq \H^1(E\cap B(0,s)).
	\]
	By minimality of $E$, we get that $E''=E'=E$.
	We put $\gamma(v_0)=(t_s,0)$, then
	\[
	E\cap B(0,s)=(P_{t_s}\cup L_1)\cap B(0,s).
	\]

	If there exists a sequence $\{ n_{k} \}_{k=1}^{\infty}$ such that
	$N(s_{n_k})=2$, then 
	\[
	E\cap B(0,s_{n_k})=L_1\cap B(0,s_{n_k})
	\]
	for any $k\geq 1$; thus we get that $E=L_1$.

	If there exist an integer $n_0\geq 1$ such that $N(s_n)=3$ for $n\geq n_0$,
	then
	\[
	E\cap B(0,s_n)=(P_{t_{s_n}}\cup L_1) \cap B(0,s_n),
	\]
	and $t_{s_n}=t_{s_{n_0}}$ for any $n\geq n_0$. By putting
	$t=t_{s_{n_0}}$, we get that $E=P_t\cup L_1$.
\end{proof}

\section{Two dimensional minimal cone with sliding
boundary}\label{se:2dimcones}

In this section we consider a simple case in $\mathbb{R}^{3}$: our domain 
$\Omega$ is a half space, and the boundary $L_1$ is the plane which is the 
boundary of $\Omega$. In the domain $\Omega$, we will see what does a sliding
minimal cone look like. For simplicity, we assume that 
\begin{equation}\label{simple}
	\begin{gathered}
		\Omega  =\{ x=(x_1,x_2,x_3)\in\mathbb{R}^{3}\mid x_3 \geq 0 \},\\
		L_{1} =\{ x=(x_1,x_2,x_3)\in\mathbb{R}^{3}\mid x_3 = 0 \}.
	\end{gathered} 
\end{equation}

Let us refer to paper \cite{David:2009} for the definition of cones of type
$\mathbb{Y}$ and $\mathbb{T}$.
We say that a cone $Z\subset\Omega$ is of type $\mathbb{P}_{+}$, if $Z$ 
is a closed half plane which is perpendicular to $L_1$ and through $0$, 
i.e. the intersection of $\Omega$ with a plane which is through $0$ and meets
$L_1$ perpendicularly; similarly we say that a cone $Z\subset\Omega$ is of type
$\mathbb{Y}_{+}$ if it is a intersection of $\Omega$ with a cone in
$\mathbb{R}^{3}$ of type $\mathbb{Y}$which is perpendicular to $L_1$. Recall
that a cone $Z$ in $\mathbb{R}^{3}$ of type $\mathbb{Y}$ is the union of there
half planes bounded by a line $\ell$, called the spine of $Z$. Here we say 
that a cone of type $\mathbb{Y}$ is perpendicular to $L_1$, if the spine of 
the cone is perpendicular to $L_1$. We will check that cones of type
$\mathbb{P}_{+}$ or $\mathbb{Y}_{+}$ are sliding minimal.

Let $Z$ be a cone of type $\mathbb{T}$, we say that $Z$ is perpendicular to
$L_1$ if the center of $Z$ locates at the origin and 
$Z\cap \overline{\Omega^c}$ is a cone of type $\mathbb{Y}_{+}$ in the 
domain $\overline{\Omega^c}$.

We say that a cone $Z\subset\Omega$ is of type $\mathbb{T}_{+}$ if it
is the intersection of $\Omega$ with a cone of type $\mathbb{T}$ which is
perpendicular to $L_1$. In this paper, we do not discuss whether or not a cone
of type $\mathbb{T}_{+}$ is sliding minimal.

A cone $Z\subset \Omega$ is called of type $\mathbb{V}$ is it can be written
as $Z=\mathscr{R}(\mathbb{R}\times V_{\alpha,0})$ where $\mathscr{R}$ is a 
rotation which maps $L_1$ into $L_1$, $V_{\alpha,0}$ is cone in a half plane 
defined as in Section \ref{se:1dimcones} and $0<\alpha<\frac{\pi}{2}$. 

\begin{lemma} \label{le:PLMC}
	Let $\Omega$, $L_{1}$ be as in \eqref{simple}. If $Z$ is a cone of type
	$\mathbb{P}_{+}$ or $\mathbb{Y}_{+}$, then $Z$ is a sliding minimal cone.
	If $Z=Z'\cup L_1$ and $Z'$ is a sliding minimal cone of type $\mathbb{P}_{+}$ 
	or $\mathbb{Y}_{+}$, then $Z$ is also a sliding minimal cone.
\end{lemma}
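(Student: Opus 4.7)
The plan is to use the reflection $\sigma$ across $L_1$ for both claims. Given a sliding deformation $\{\varphi_t\}$ in an open set $U$, I extend it to a deformation $\{\tilde\varphi_t\}$ on $\tilde U = U \cup \sigma(U)$ by setting $\tilde\varphi_t(x) = \sigma(\varphi_t(\sigma(x)))$ for $x\in\sigma(U)$ and $\tilde\varphi_t = \varphi_t$ on $U$. The sliding condition $\varphi_t(L_1)\subset L_1$ makes this extension well-defined and continuous on $L_1 \cap U$, so $\{\tilde\varphi_t\}$ is a bona-fide Almgren deformation of $\mathbb{R}^3$, whose non-identity set is $\tilde W_1 = W_1 \cup \sigma(W_1)$. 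One similarly sets $\tilde Z = Z \cup \sigma(Z)$.

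\emph{Part (i).} When $Z$ is of type $\mathbb{P}_+$ or $\mathbb{Y}_+$, the symmetric set $\tilde Z$ is, respectively, a full two-plane or a classical $\mathbb{Y}$-cone in $\mathbb{R}^3$, and is therefore Almgren minimal. Applying Almgren minimality of $\tilde Z$ to $\tilde\varphi_1$ yields
\[
\H^2(\tilde Z \cap \tilde W_1)\le \H^2\bigl(\tilde\varphi_1(\tilde Z \cap \tilde W_1)\bigr).
\]
Since $\tilde Z \cap L_1$ has $\H^2$-measure zero (a single line in the $\mathbb{P}_+$ case, three rays in the $\mathbb{Y}_+$ case), a brief symmetry computation gives $\H^2(\tilde Z \cap \tilde W_1) = 2\,\H^2(Z\cap W_1)$, and the general bound $\H^2(A \cup \sigma(A)) \le 2\H^2(A)$ applied to $A = \varphi_1(Z\cap W_1)$ shows $\H^2(\tilde\varphi_1(\tilde Z\cap\tilde W_1)) \le 2\H^2(\varphi_1(Z\cap W_1))$. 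Dividing by two gives the sliding-minimality inequality for $Z$.

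\emph{Part (ii).} For $Z = Z'\cup L_1$ the reflected set $\tilde Z = \tilde{Z'}\cup L_1$ is not Almgren minimal (two perpendicular planes, or $\mathbb{Y}$ plus a perpendicular plane), so the reflection trick cannot be invoked on $\tilde Z$ directly. Instead I would combine Part~(i) with the observation that $\varphi_1|_{L_1}:L_1\to L_1$ is surjective: extended by the identity outside $L_1\cap U$, it is a proper self-map of $L_1 = \mathbb{R}^2$ equal to the identity at infinity, hence of degree one. This gives $\varphi_1(L_1\cap W_1)\supset L_1\cap W_1$, so $\varphi_1(Z\cap W_1) \supset L_1\cap W_1$, and after splitting the desired inequality $\H^2(Z\cap W_1)\le \H^2(\varphi_1(Z\cap W_1))$ into its on- and off-$L_1$ parts the task reduces to
\[
\H^2\bigl(\varphi_1(Z'\cap W_1)\setminus L_1\bigr)\ge \H^2(Z'\cap W_1).
\]

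The main obstacle is closing this reduced inequality. The reflection trick of Part~(i) applied only to $\tilde{Z'}$ yields the weaker bound
\[
\H^2(Z'\cap W_1)\le \H^2\bigl(\varphi_1(Z'\cap W_1)\setminus L_1\bigr) + \tfrac{1}{2}\,\H^2\bigl(\varphi_1(Z'\cap W_1)\cap L_1\bigr),
\]
so a residual half-measure of the piece of $\varphi_1(Z')$ that lands on $L_1$ has to be absorbed. I would handle this by first treating sliding deformations for which $\varphi_1((Z'\setminus L_1)\cap W_1)\cap L_1$ has null $\H^2$-measure, where the residual term vanishes and Part~(i) alone suffices, and then reducing the general case by perturbing $\varphi_1$ slightly in the direction normal to $L_1$ on the preimage $\varphi_1^{-1}(L_1)\setminus L_1$, passing to the limit via lower semicontinuity of $\H^2$ on rectifiable sets.
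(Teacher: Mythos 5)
Your Part (i) is essentially the paper's argument: reflect the deformation and the cone across $L_1$, observe that $Z\cup\sigma(Z)$ is a plane or a genuine $\mathbb{Y}$-cone, hence Almgren minimal, and unfold the resulting inequality. That part is fine, and your reduction of Part (ii) via the degree-one argument ($\varphi_1|_{L_1}$ is proper and equal to the identity at infinity, so $\varphi_1(L_1\cap W_1)\supset L_1\cap W_1$) correctly isolates the key inequality $\H^2\bigl(\varphi_1(Z'\cap W_1)\setminus L_1\bigr)\geq \H^2(Z'\cap W_1)$, which is indeed what has to be proved.

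The gap is in your plan for closing that inequality. The perturbation argument does not do what you need. Suppose the worst case actually occurs: $\varphi_1$ folds a positive-measure piece of $Z'$ onto $L_1$ in such a way that $\varphi_1(Z'\cap W_1)\cap L_1\subset\varphi_1(L_1\cap W_1)$, so this piece is entirely absorbed by the image of $L_1$ and contributes nothing to $\H^2(\varphi_1(Z\cap W_1))$. If you now lift $\varphi_1(Z')$ off $L_1$ by $\varepsilon$, the lifted piece reappears with its full (positive, $\varepsilon$-independent) measure, so the perturbed competitor is strictly worse than the original by an amount that does not tend to zero; proving the minimality inequality against the perturbed competitors says nothing about the original one. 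The semicontinuity step also points the wrong way: from $\H^2(F_\varepsilon)\geq c$ for the perturbed images you would need an \emph{upper} semicontinuity statement to conclude $\H^2(F_0)\geq c$, whereas lower semicontinuity gives $\H^2(F_0)\leq\liminf\H^2(F_\varepsilon)$. So the residual term $\tfrac12\H^2(\varphi_1(Z'\cap W_1)\cap L_1)$ is never absorbed, and the reflection trick alone genuinely cannot see why folding $Z'$ onto $L_1$ is not profitable.

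The paper avoids reflection entirely in Part (ii) and uses a slicing (integral-geometric) argument instead. Let $\pi(x_1,x_2,x_3)=x_3$ and let $E$ be a competitor agreeing with $Z$ outside $B(0,r/2)$. By the coarea inequality (Theorem 3.2.22 in Federer), since $apJ_m\pi\leq 1$ on the rectifiable set $E$,
\[
\H^2(E\cap B(0,r)\setminus L_1)\;\geq\;\int_0^r \H^1\bigl(E\cap B(0,r)\cap\pi^{-1}(y)\bigr)\,dy .
\]
For each height $y>0$ the slice $Z\cap\pi^{-1}(y)=Z'\cap\pi^{-1}(y)$ is a line or a one-dimensional $Y$, hence minimal in the plane $\pi^{-1}(y)$, and the slice of $E$ agrees with it outside a ball and connects the same points, so $\H^1(E\cap B(0,r)\cap\pi^{-1}(y))\geq\H^1(Z\cap B(0,r)\cap\pi^{-1}(y))$. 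Integrating in $y$ gives exactly the reduced inequality you were after. The point is that the horizontal slices at height $y>0$ never meet $L_1$, so the argument is structurally immune to the folding problem: a piece of $\varphi_1(Z')$ squashed onto $L_1$ simply disappears from every slice at positive height, and the slicewise one-dimensional minimality forces the competitor to keep enough area strictly above $L_1$. If you want to salvage your approach, this slicing step (or a calibration by the vertical direction, which is what it amounts to) is the missing ingredient.
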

\begin{proof}
	Suppose that $Z$ is of type $\mathbb{P}_{+}$ or
	$\mathbb{Y}_{+}$, which is not sliding minimal. Then there is a 
	competitor of $Z$, say $E$, such that 
	\[
	\H^2(E\setminus Z)<\H^2(Z\setminus E).
	\]
	Let $\sigma:\mathbb{R}^{3}\to\mathbb{R}^{3}$ be the reflection with respect
	to the plane $L_1$. That is, for any $(x_1,x_2,x_3)\in \mathbb{R}^{3}$,
	$\sigma(x_1,x_2,x_3)=(x_1,x_2,-x_3)$.
	Then $\widetilde{E}=E\cup \sigma(E)$ is a competitor of $\widetilde{Z}=Z\cup
	\sigma(Z)$, and 
	\[
	\H^2(\widetilde{E}\setminus \widetilde{Z})<\H^2(\widetilde{Z}\setminus
	\widetilde{E}).
	\]
	But we know that $\widetilde{Z}$ is a plane or a cone of type $\mathbb{Y}$,
	which is minimal in $\mathbb{R}^{3}$, that gives a contradiction.

	Now suppose that $Z=Z'\cup L_1$, where $Z'$ is cone of type $\mathbb{P}_{+}$ or
	$\mathbb{Y}_{+}$. Let $E$ be any competitor of $Z$. Suppose that $E$
	coincide with $Z$ out of the ball $B(0,r/2)$.
	Let $\pi:\mathbb{R}^{3}\to\mathbb{R}$ be the function defined by
	$\pi(x_1,x_2,x_3)=x_3$. By using Lemma 8.10 in \cite{David:2009} or Theorem
	3.2.22 in \cite{Federer:1969}, we get that 
	\[
	\int_{E\cap B(0,r)}apJ_{m}\pi(z)d\H^2(z)=\int_{y\in\mathbb{R}}\int_{z\in
	\pi^{-1}(y)}1_{B(0,r)\cap E}(z)d\H^1(z)d\H^1(y),
	\]
	where $apJ_m\pi(x)$ is the approximate Jacobian, see \cite{Federer:1969}.
	We can check that $apJ_{m}\pi(z)\leq 1$ for any $z\in E$. Thus 
	\[
	\H^2(E\cap B(0,r)\setminus L_1)\geq \int_{0}^{r}\int_{z\in
	\pi^{-1}(y)}1_{E\cap B(0,r)}(z)d\H^1(z)d\H^1(y).
	\]
	For any $0<y<r$, $\pi^{-1}(y)$ is a plane, and $Z\cap \pi^{-1}(y)$ is a line
	or a $Y$ in this plane, so it is minimal in the plane. But $E\cap
	\pi^{-1}(y)$ coincide with $Z\cap \pi^{-1}(y)$ out of the ball $B(0,1)$, and 
	it is not hard to check that $E\cap \pi^{-1}(y)$ is connected. Thus
	\begin{align*}
		\int_{z\in \pi^{-1}(y)}1_{E\cap B(0,r)}(z)d\H^1(z)&	=\H^1(E\cap B(0,1)\cap
		\pi^{-1}(y))\\
		&\geq \H^1(Z\cap B(0,1)\cap \pi^{-1}(y))\\
		&=\int_{z\in \pi^{-1}(y)}1_{Z\cap B(0,r)}(z)d\H^1(z),
	\end{align*}
	hence 
	\[
	\H^2(E\cap B(0,r)\setminus L_1)\geq \int_{0}^{r}\int_{z\in
	\pi^{-1}(y)}1_{Z\cap B(0,r)}(z)d\H^1(z)d\H^1(y).
	\]
	Since $Z=Z'\cap L_1$, and $Z'$ is a cone of type $\mathbb{P}_{+}$ or
	$\mathbb{Y}_{+}$, we have that 
	\[
	\H^1(Z\cap B(0,r)\setminus L_1)=\int_{0}^{r}\int_{z\in
	\pi^{-1}(y)}1_{Z\cap B(0,r)}(z)d\H^1(z)d\H^1(y).
	\]
	We get that 
	\[
	\H^2(E\cap B(0,r)\setminus L_1)\geq \H^1(Z\cap B(0,r)\setminus L_1),
	\]
	thus
	\[
	\H^2(E\setminus Z)\leq \H^2(Z\setminus E),
	\]
	and $Z$ is minimal.
\end{proof}

Let $Q$ be any convex polyheddron, $x$ be a point in the interior of $Q$. If
$F\subset Q$ is a compact set with $x\not\in F$, then we can find a Lipschitz
map
\begin{equation}\label{eq:radialprojection}
	\Pi_{Q,x}:\mathbb{R}^{3}\to\mathbb{R}^{3}
\end{equation}
such that 
\begin{equation}\label{eq:radialprojection2}
	\Pi_{Q,x}\vert_{Q^c}=\id_{Q^c},\ \Pi_{Q,x}(E)\subset\partial Q.
\end{equation}
Indeed, we take a very small ball $B(x,r)$ such that $B(x,r)\cap F=\emptyset$,
and consider the map $\varphi:\mathbb{R}^{3}\setminus
B(x,r)\to\mathbb{R}^{3}$ defined by 
\[
	\varphi(y)=\begin{cases}
		y,&y\in Q^c;\\
		\{ ty+(1-t)x\mid t\geq 0 \}\cap \partial Q,&x\in Q\setminus B(x,r).
	\end{cases}
\]
$\varphi$ is Lipschitz on $\mathbb{R}^{3}\setminus B(x,r)$. By the
Kirszbraun's theorem \cite[2.10.43]{Federer:1969}, we can find a Lipschitz
map $\Pi_{Q,x}:\mathbb{R}^{3}\to\mathbb{R}^{3}$ such that 
\[
	\Pi_{Q,x}\vert_{\mathbb{R}^{3}\setminus B(x,r)}=\varphi.
\]

\begin{lemma}
	Let $\Omega$, $L_{1}$ be as in \eqref{simple}. If $Z'$ is a cone of type
	$\mathbb{T}_{+}$, then the cone $Z=L_1\cup Z'$ is not minimal.
\end{lemma}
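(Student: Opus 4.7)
The plan is to prove that $Z = L_1 \cup Z'$ is not sliding minimal by constructing an explicit sliding-deformation whose image has strictly smaller $\H^2$-measure. The geometric setup: orienting the underlying $\mathbb{T}$-cone $T$ so that one vertex of the underlying regular tetrahedron sits at $(0,0,-1)$ and the other three symmetrically at height $1/3$, the cone $Z' = T \cap \Omega$ decomposes into three upward-tilted rays (to the three upper tetrahedral vertices), three planar faces between pairs of these rays (each a sector of tetrahedral angle $\arccos(-1/3)$), and three vertical half-sectors meeting $L_1$ perpendicularly along three rays at azimuths $2\pi/3$ apart. A direct calculation gives
\[
\H^2(Z\cap B(0,r)) = (\pi + C)\,r^2,\qquad C = \tfrac{3}{2}\bigl(\arccos(-1/3) + \arccos(2\sqrt{2}/3)\bigr) > 0,
\]
so that the density of $Z$ at the origin (approximately $2.07$) strictly exceeds the density $7/4$ of the sliding-minimal cone $L_1 \cup \mathbb{Y}_+$ (whose minimality is Lemma \ref{le:PLMC}). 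This density gap is what the competitor will realize.

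For the deformation, pick $0 < \rho < \sigma$ and a smooth radial bump $\psi\colon[0,\infty)\to[0,1]$ with $\psi\equiv 1$ on $[0,\rho]$ and $\psi\equiv 0$ on $[\sigma,\infty)$, and define the pushdown
\[
\varphi_t(x_1,x_2,x_3) = \bigl(x_1,\,x_2,\,x_3\bigl(1 - t\,\psi(|x|)\bigr)\bigr).
\]
One checks readily that this is a valid sliding-deformation: $\varphi_0 = \id$, $\varphi_1$ is Lipschitz, $(t,x)\mapsto\varphi_t(x)$ is continuous, $\varphi_t(L_1)\subset L_1$ (the third coordinate stays zero when it starts there), $\varphi_t(\Omega)\subset\Omega$ (since $1-t\psi \geq 0$), and the support is relatively compact in $B(0,\sigma)$. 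Inside $B(0,\rho)$ the map sends $(x_1,x_2,x_3)$ to $(x_1,x_2,0)\in L_1$, so $\varphi_1(Z\cap B(0,\rho)) \subset L_1$: the entire $Z'$-contribution inside the inner ball is absorbed into $L_1$, producing a gross saving of $\H^2(Z'\cap B(0,\rho)) = C\rho^2$.

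The crux is then to control $\H^2\bigl(\varphi_1(Z')\cap (B(0,\sigma)\setminus B(0,\rho))\bigr)$ in the transition annulus. At each point of $Z'$ the differential $D\varphi_1$ acts on the tangent plane with unit normal $\vec n$ by scaling area by $|\mathrm{cof}(D\varphi_1)\vec n|$, which expands as $\sqrt{g^2 + (1-g^2)n_3^2} \leq 1$ from the pure vertical compression $g=1-\psi$, plus a radial-perturbation term of size $\lesssim x_3\,g'(|x|)$ with $g' \sim (\sigma-\rho)^{-1}$. The main obstacle is precisely that this correction is pointwise unbounded as $\sigma-\rho\to 0$, so a crude Lipschitz bound is useless; one needs an integrated case-by-case estimate, separately for the three vertical half-sectors (where $n_3=0$, so only the correction contributes but through a small $x_3$-range) and for the three tilted upper faces (where $n_3^2 = 2/3$, so the pure-compression factor is genuinely $<1$). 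Optimizing for instance at $\sigma = 2\rho$, one verifies that the excess image-area in the annulus is strictly less than $C\rho^2$, so the saving dominates. Should the integrated estimate prove too delicate, a backup uses the radial projection $\Pi_{Q,x_0}$ from \eqref{eq:radialprojection}--\eqref{eq:radialprojection2}, taking $x_0$ on the positive $x_3$-axis (which one verifies meets $Z'$ only at the origin, since no planar face or vertical sector of $Z'$ contains that axis away from $0$) and $Q$ a polyhedral tent based in $L_1$; the sliding condition is then automatic because $\Pi_{Q,x_0}$ fixes $L_1\cap\partial Q$ pointwise.
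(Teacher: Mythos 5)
Your competitor is geometrically viable, but the proof as written has a genuine gap at exactly the point you flag yourself: the assertion that ``the excess image-area in the annulus is strictly less than $C\rho^2$'' \emph{is} the theorem, and you neither carry out the integrated Jacobian estimate nor the backup projection. The gap is not cosmetic. As you note, a pointwise bound on the area-scaling factor fails: on the tilted upper faces the correction term $\ell\,|g'|\,|\nabla r|$ (with $\ell\le r/\sqrt3$ and $|g'|\sim(\sigma-\rho)^{-1}$) is of order $1$ when $\sigma=2\rho$, and a crude estimate of the image over the shadow of the three upper faces (whose total shadow over the annulus is the full annular disk, of area $3\pi\rho^2$) gives an excess that can exceed $C\rho^2\approx 3.38\,\rho^2$. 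So the conclusion really does hinge on an integration that you have not performed. The underlying inequality is true: in the sharp-cutoff limit $\sigma\to\rho$ the annulus image degenerates to the ``curtain'' of vertical segments joining $Z'\cap\partial B(0,\rho)$ to its projection onto $L_1$, whose area is at most
\[
\rho^2\int_{K'}u_3\,d\H^1(u)
=\rho^2\Bigl(3\bigl(1-\tfrac{2\sqrt2}{3}\bigr)+3\cdot\tfrac{2\sqrt2}{3}\Bigr)=3\rho^2
<C\rho^2\approx 3.3756\,\rho^2,
\]
where $K'=Z'\cap\partial B(0,1)$ (three great-circle arcs of length $\arccos(-1/3)$ between the upper tetrahedron vertices, contributing $2\sqrt2/3$ each to $\int u_3$, and three arcs of length $\arcsin(1/3)$ down to $L_1$, contributing $1-2\sqrt2/3$ each). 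But to turn this into a proof you must either justify passing to that limit inside an honest sliding deformation, or do the finite-$(\sigma-\rho)$ estimate face by face (the integration by parts $\int_\rho^\sigma r^2g'\,dr=\sigma^2-2\int_\rho^\sigma rg\,dr$ settles the three vertical half-sectors for every nondecreasing $g$, but the upper faces still require work). Until one of these is done, no competitor of smaller measure has been exhibited.

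The paper takes the route of your ``backup'' and avoids all limiting and Jacobian considerations: it forms the triangular prism $C$ with lower vertices $A_i$ on the traces $Z'\cap L_1$ and upper vertices $B_i$ at the three upper tetrahedron directions, pushes $Z\cap C$ onto $\partial C$ by the radial projection $\Pi_{C,x_0}$ from the interior point $x_0=(0,0,1/4)$, and compares two finite sums of areas of triangles and rectangles, with difference $\tfrac{4\sqrt2-2\sqrt6}{3}>0$; the sliding condition holds because the base face lies in $L_1$ and is fixed. I would recommend executing that version rather than the flattening: it is elementary, entirely explicit, and closes the argument in a few lines of arithmetic.
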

Here we do not want to talk about whether or not a cone of type
$\mathbb{T}_{+}$ is minimal, it is not so obvious. Recall that a cone of
type $\mathbb{T}$ has six faces, that meet by sets of three and with
$120^{\circ}$ angles along four edges (half lines emanating from the center).
\begin{proof}
	We put $O=(0,0,0)$,
	$A_1=(\frac{2\sqrt{2}}{3},0,0)$,
	$A_2=(-\frac{\sqrt{2}}{3},\frac{\sqrt{6}}{3},0)$, 
	$A_3=(-\frac{\sqrt{2}}{3},-\frac{\sqrt{6}}{3},0)$,
	$B_1=(\frac{2\sqrt{2}}{3},0,\frac{1}{3})$,
	$B_2=(-\frac{\sqrt{2}}{3},\frac{\sqrt{6}}{3},\frac{1}{3})$, 
	$B_3=(-\frac{\sqrt{2}}{3},-\frac{\sqrt{6}}{3},\frac{1}{3})$. 
	We denote by $C$ the triangular prism	$A_1A_2A_3B_1B_2B_3$, by $\Gamma$ the
	union of eight edges of $C$. Without loss of generality, we assume that 
	\[
	Z=\bigcup_{t\geq 0}t\Gamma.
	\]
	We denote by $F_0$, $F_1$, $F_2$ and $F_3$ the faces $A_1A_2A_3$,
	$A_3A_1B_1B_3$, $A_1A_2B_2B_1$ and $A_2A_3B_3B_2$ of the prism $C$ respectively.
	Consider $\tilde{Z}=(Z\setminus C) \cup F_0\cup F_1\cup F_2\cup F_3$. We
	will show that $\tilde{Z}$ is a competitor of $Z$. 

	We take $x_0=(0,0,\frac{1}{4})$, then $x_0$ is in the interior of the
	triangular prism. We take a Lipschitz map $\Pi_{C,x_0}$ as in
	\eqref{eq:radialprojection}.	Then $\tilde{Z}=\Pi_{C,x_0}(Z)$ is a
	competitor of $Z$.

	Since $Z\cap C$ consists of faces (triangles) $A_1A_2A_3$, $OA_1B_1$,
	$OA_2B_2$, $OA_3B_3$, $OB_1B_2$, $OB_2B_3$ and $OB_3B_1$.	By a simple	
	calculation, we can get that 
	\[
	\H^2(Z\cap C)=\frac{4\sqrt{2}+\sqrt{3}}{3}.
	\]
	Similarly, $\title{Z}\cap C$ consists of faces $F_0$, $F_1$, $F_2$ and $F_3$,
	thus
	\[
	\H^2(\tilde{Z}\cap C)=\frac{2\sqrt{6}+\sqrt{3}}{3}.
	\]
	Therefore 
	\[
	\H^2(\tilde{Z}\setminus Z)<\H^2(Z\setminus\tilde{Z}),
	\]
	and $Z$ is not minimal.
\end{proof}

\begin{definition}[{\cite[Definition 2.1]{Morgan:1994}}]
	Let $B_0$ be a closed subset of $\mathbb{R}^{n}$. Let $\delta,c,\alpha>0$ be
	given. We say that a nonempty bounded subset $S\subset \mathbb{R}^{n}\setminus B_0$
	is $d$-dimensional $({\bf M},cr^\alpha,\delta)$-minimal relative to $B_0$ if 
	\[
	\H^d(S)<\infty,\ S={\rm supp}(\H^d\llcorner S)\setminus B_0,
	\]
	and 
	\[
	\H^d(S\cap W)\leq (1+cr^\alpha)\H^d(\varphi(S\cap W))
	\]
	whenever $\varphi:\mathbb{R}^{n}\to \mathbb{R}^{n}$ is Lipschitz with
	$\diam(W\cup \varphi(W))=r<\delta$ and $\dist(W\cup \varphi(W),B_0)>0$, where
	$W=\{ z\in\mathbb{R}^{n}\mid \varphi(z)\neq z \}$.
\end{definition}

When $B_0=L_1$, $h(r)=cr^{\alpha}$, $E\subset \Omega$ is a reduced
bounded $(U,h)$-$A_{+}$-sliding-almost-minimal set, then it is very easy to
see that $E$ is also $({\bf M},cr^\alpha,\delta)$-minimal relative to $B_0$.
With help of this property, we can use a result of Morgan \cite[Regularity
Theorem 3.8]{Morgan:1994}, which is stated as follows.
\begin{theorem}\label{thm:regularityonsphere}
	Fix $\delta,c,\alpha>0$. Let $B_0$ be a closed subset of $\mathbb{R}^{n}$.
	Let $S$ be a one-dimensional $({\bf M},cr^\alpha,\delta)$-minimal set with
	respect to $B_0$. Then $S$ consists of $C^{1,\alpha/2}$ curves that can only
	meet in three at isolated points of $\mathbb{R}^{n}\setminus B_0$ and with
	$120^\circ$ angles .
\end{theorem}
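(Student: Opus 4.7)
The plan is to combine almost-monotonicity of the one-dimensional density with a blow-up classification and an $\varepsilon$-regularity argument. First, for $x\in S\setminus B_0$ and $r$ so small that $B(x,r)\cap B_0=\emptyset$, I would show that $r\mapsto \theta(x,r)e^{\lambda r^{\alpha}}$ is non-decreasing for a suitable $\lambda$, by comparing $S\cap B(x,r)$ with the cone $\Pi_{B(x,r),x}(S)$ constructed as in \eqref{eq:radialprojection} and using the $({\bf M},cr^{\alpha},\delta)$ inequality. This ensures that the density $\theta(x)=\lim_{r\to 0}\theta(x,r)$ exists, and it gives relative compactness of the blow-up sequences $r^{-1}(S-x)$; any limit is a one-dimensional minimal cone in $\mathbb{R}^{n}$.

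Next, I would classify those limits. For a one-dimensional minimal cone $K$ centered at $0$, the set $K\cap\partial B(0,1)$ is finite (else $\H^{1}(K\cap B(0,1))=\infty$), and a Steiner-replacement argument of the same flavour as Lemma \ref{listofonedimensionalminimalcone} shows that any two of the constituent rays meet at angle $\geq 2\pi/3$. In $\mathbb{R}^{n}$ this forces $\#(K\cap\partial B(0,1))\leq 3$, with equality only for three coplanar rays meeting at exactly $120^{\circ}$. Hence $\theta(x)\in\{1,3/2\}$, and every tangent cone of $S$ is either a line or a $Y$.

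With the classification in hand, I would prove an $\varepsilon$-regularity statement by contradiction and compactness: given $\eta>0$, there is $\varepsilon>0$ such that if $B(x,r)\cap B_0=\emptyset$, $r^{\alpha}<\varepsilon$ and $\theta(x,r)<1+\eta$, then $S\cap B(x,r/2)$ is a Lipschitz graph of slope $\leq\eta$ over the best-approximating line; otherwise a rescaling limit would be a one-dimensional minimal cone with density in $[1,1+\eta]$ that is not a line, contradicting the classification. The same compactness trick yields the analogous statement near a $Y$-point with density close to $3/2$, and, combined with the strict gap between the two density values, it shows that $Y$-points are isolated in $S\setminus B_0$ and that three $C^0$ arcs abut each of them.

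To promote "Lipschitz graph" to $C^{1,\alpha/2}$, I would iterate a one-step excess-decay lemma. Let $E(x,r)$ denote the $L^{2}$-tilt-excess of $S$ from its best approximating line (or $Y$) in $B(x,r)$. Testing the $({\bf M},cr^{\alpha},\delta)$ inequality against a linear/harmonic graph competitor built on top of that approximation yields $E(x,r/2)\leq \tfrac{1}{2}E(x,r)+Cr^{\alpha}$; iterating over dyadic scales gives $E(x,r)\lesssim r^{\alpha}$, and since the angular deviation of the tangent direction is controlled by $\sqrt{E(x,r)}$, the tangent oscillates like $r^{\alpha/2}$, which is precisely the $C^{1,\alpha/2}$ modulus. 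The main obstacle is this last step: the exponent $\alpha/2$ (rather than $\alpha$) is forced by the quadratic dependence of the excess on the tilt against a linear-in-$r^{\alpha}$ gauge error, and at $Y$-singularities the iteration has to be carried out in a frame that rotates with the approximating $Y$ so that the three $C^{1,\alpha/2}$ arcs meet at the vertex with the required $120^{\circ}$ angles.
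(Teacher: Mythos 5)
This theorem is not proved in the paper at all: it is quoted from Morgan, \cite[Regularity Theorem 3.8]{Morgan:1994}, and the paper's only contribution at this point is the remark just above the statement that a reduced bounded $A_{+}$-sliding-almost-minimal set is $({\bf M},cr^{\alpha},\delta)$-minimal relative to $B_0=L_1$, so that Morgan's result can be invoked. Your sketch is therefore not competing with an argument in the paper; what it does is reconstruct, in outline, essentially the standard proof (which is also Morgan's): almost-monotonicity of the density with the gauge correction, classification of one-dimensional blow-up limits as lines and $Y$'s (your inner-product computation showing that at most three unit vectors in $\mathbb{R}^{n}$ can be pairwise at angle $\geq 2\pi/3$, with equality forcing a coplanar $120^{\circ}$ configuration, is the right one), an $\varepsilon$-regularity step exploiting the density gap between $1$ and $3/2$, and a flatness estimate whose square root produces the exponent $\alpha/2$.

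Two steps would need repair before this became a complete proof. First, in the compactness argument you assert that a limit of rescalings of a contradicting sequence ``would be a one-dimensional minimal cone with density in $[1,1+\eta]$''; such a limit is a minimal \emph{set}, not automatically a cone, and to conclude you must use the pinching of the density ratio across scales (almost-monotonicity together with $\theta(x,r)<1+\eta$ and $\theta\geq 1$ at every point of the support) to show that the limit, or a further blow-up of it, is a line. Second, the $L^{2}$-tilt-excess iteration with the one-step decay $E(x,r/2)\leq\frac{1}{2}E(x,r)+Cr^{\alpha}$ is both unsubstantiated as stated (it is unclear why a harmonic-graph competitor yields the factor $\frac{1}{2}$ for a one-dimensional set) and unnecessary: once the set in $B(x,r)$ is known to be a single arc joining two boundary points, comparing it with the chord gives $\H^1(\mathrm{arc})\leq(1+cr^{\alpha})\left\vert a-b\right\vert$, while the elementary length inequality $\H^1(\mathrm{arc})\geq\left(\left\vert a-b\right\vert^{2}+4d^{2}\right)^{1/2}$ for the maximal deviation $d$ from the chord yields $d\leq C\left\vert a-b\right\vert^{1+\alpha/2}$ directly; this one-step chord comparison is where $\alpha/2$ genuinely comes from, and it also hands you the $C^{1,\alpha/2}$ modulus without any excess-decay iteration. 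Your heuristic for the exponent (quadratic dependence of the length excess on the deviation against a gauge error linear in $r^{\alpha}$) is exactly right, so the sketch identifies the correct mechanism even though the machinery you propose to implement it is not the efficient one. The treatment of $Y$-points by isolating them via the density gap and running the arc analysis on each of the three branches is consistent with how the literature handles them.
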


Let $E$ be a sliding minimal cone in $\Omega$.  Set $K=\partial B(0,1)\cap
E$, $S=K\setminus L_1$. We want to show that $S$ is $({\bf
M},cr^\alpha,\delta)$-minimal with respect to $B_0=L_1$ for some
$\alpha,c,\delta>0$.

\begin{proposition}\label{prop:regularityonshpere}
	Let $\Omega,L_1$ be as in \eqref{simple}, $B_0=L_1$. Let $E$ be a reduced
	sliding minimal cone in $\Omega$, $K=\partial B(0,1)\cap E$. 
	If $K\setminus L_1\neq \emptyset$, then $K$ is 
	$A_{+}$-sliding-almost-minimal for some gauge function $h$ such that
	$h(r)=cr$ for $r<\frac{1}{100}$.
\end{proposition}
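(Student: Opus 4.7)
The plan is to transfer the sliding minimality of the two-dimensional cone $E$ in $\Omega$ (with sliding boundary $L_{1}$) down to an $A_{+}$-sliding-almost-minimality of its one-dimensional link $K$ on the hemisphere $S^{2}_{+}=\partial B(0,1)\cap\Omega$ (with sliding boundary the equator $L_{1}\cap\partial B(0,1)$). The key idea is to lift any sliding deformation of $K$ on $S^{2}_{+}$ to a sliding deformation of $E$ in $\Omega$ via radial coning with a trapezoidal radial cutoff, and then to exploit the cone structure $E=\bigcup_{t\geq 0}tK$ to compare the resulting Hausdorff measures.

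Given a Lipschitz sliding deformation $\psi:S^{2}_{+}\to S^{2}_{+}$ with support $W$ of diameter $r<1/100$ preserving the equator, I would fix a trapezoidal cutoff $\lambda:[0,\infty)\to[0,1]$ that vanishes outside $[1/2,2]$, equals $1$ on $[3/4,5/4]$, and is linear on each of $[1/2,3/4]$ and $[5/4,2]$. Set
\[
	\Phi(x)=|x|\,\gamma_{x/|x|}(\lambda(|x|)),
\]
where $\gamma_{v}:[0,1]\to S^{2}_{+}$ is the shortest spherical geodesic from $v$ to $\psi(v)$. Since $\psi$ preserves the equator and for $r$ small such short geodesics remain in $S^{2}_{+}$ (and on the equator when both endpoints lie there), $\Phi(L_{1})\subset L_{1}$. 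The map $\Phi$ is Lipschitz, equals $\id$ outside the bounded annular sector $\{1/2\leq|x|\leq 2\}\cap\{x/|x|\in W\}$, and the homotopy $\Phi_{s}(x)=|x|\,\gamma_{x/|x|}(s\lambda(|x|))$ yields a bona fide sliding deformation of $E$ in $\Omega$.

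Sliding minimality of $E$ then reads $\H^{2}(E\cap W^{\Phi})\leq\H^{2}(\Phi(E\cap W^{\Phi}))$. By polar coordinates and the cone structure, the left side equals $\int_{1/2}^{2} t\,dt\cdot\H^{1}(K\cap W)=\tfrac{15}{8}\H^{1}(K\cap W)$. On the right side, the central piece $\{3/4\leq|x|\leq 5/4\}$ where $\lambda\equiv 1$ is exactly the cone over $\psi(K\cap W)$, contributing $\tfrac{1}{2}\H^{1}(\psi(K\cap W))$. The two bridge pieces over the transition annuli are estimated from the parametrization $F(t,v)=t\gamma_{v}(\lambda(t))$: since $|\dot\gamma_{v}|\leq Cr$ (the geodesic length is bounded by the displacement $|\psi(v)-v|\leq r$) and $|\lambda'|$ is bounded, the $t$-tangent of $F$ has norm $1+O(r)$, so each bridge area is bounded by $(1+Cr)$ times the cone area of $K\cap W$ over the corresponding annulus. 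Substituting and rearranging yields $\H^{1}(K\cap W)\leq(1+cr)\H^{1}(\psi(K\cap W))$ with an absolute constant $c$, which is precisely the desired gauge $h(r)=cr$.

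The main obstacle is controlling the $v$-Jacobian of the bridge parametrization, namely the integral $\int_{K\cap W}|d_{v}\gamma_{v}(\lambda(t))|\,d\H^{1}(v)$, since the Lipschitz constant of $\psi$ along $K$ is not a priori bounded in terms of $r$. The standard remedy is to reduce without loss of generality to the case where $\psi$ is injective on $K\cap W$ (otherwise replace $\psi$ by a 1-to-1 ``straightening'' whose image length is no larger, producing a competitor at least as good), whence $\int_{K\cap W}|d_{v}\gamma_{v}(\lambda(t))|\,d\H^{1}$ coincides with $\H^{1}(\gamma_{\cdot}(\lambda(t))(K\cap W))$, and this length interpolates continuously between $\H^{1}(K\cap W)$ and $\H^{1}(\psi(K\cap W))$ under the uniform displacement bound $|\psi(v)-v|\leq r$.
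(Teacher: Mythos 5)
Your overall strategy (cone the spherical deformation radially, with a cutoff producing a central exact cone over $\psi(K\cap W)$ plus two transition ``bridges'', then invoke minimality of $E$) is the natural one, but there is a genuine gap exactly at the point you flag: the bridge area. The area of the image of the transition annuli is bounded by the parametrized area $\int t\,\left\vert \partial_t F\wedge \partial_v F\right\vert$, and the tangential factor $\left\vert d_v\gamma_v(\lambda(t))\right\vert$ is governed by the Lipschitz constant of $\psi$ along $K$, which is finite but completely uncontrolled in terms of $r$; without further input each bridge can have area as large as $\Lip(\psi)^2$ times the cone area, which is useless. Your proposed repair has two unproved steps. First, the reduction to $\psi$ injective on $K\cap W$ is not justified: you would need to replace an arbitrary Lipschitz deformation by an injective one that is still an admissible sliding deformation (Lipschitz, supported in $W$, preserving $L_1$) with image contained in $\psi(K\cap W)$, and no construction is given (a map that folds or wraps $K\cap W$ onto a short arc has no obvious such straightening). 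Second, even granting injectivity, the claim that the length of the intermediate curves $v\mapsto\gamma_v(s)$ interpolates between the endpoint lengths is a Jacobi-field convexity estimate for short geodesics on the sphere that is asserted rather than proved, and it relies on the area-formula identity $\int_{K\cap W}\left\vert D(\psi\vert_K)\right\vert d\H^1=\H^1(\psi(K\cap W))$, which is precisely where injectivity enters. As written, the argument yields at best $\H^1(K\cap W)\leq(1+cr)\int_{K\cap W}\left\vert D(\psi\vert_K)\right\vert d\H^1$, which is weaker than the statement.

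The paper sidesteps this difficulty, and this is the idea you are missing. After coning the (projected) deformation inside $B(0,1)$, it composes with a further Lipschitz map $\widetilde{\pi}$ that radially collapses the whole transition annulus $\{1\leq\left\vert x\right\vert\leq 2\}$ onto the unit sphere. The resulting competitor coincides with the exact cone over $\widetilde{\varphi}(K)$ inside $B(0,1)$, and the entire ``bridge'' is crushed into a subset of $\partial B(0,1)\cap B(x_0,R)$, whose $\H^2$ measure is at most $4\pi R^2$ no matter how wild the tangential behaviour of the deformation is. This gives the additive inequality $\H^1(K\cap W_1)\leq\H^1(\widetilde{\varphi}(K\cap W_1))+8\pi R^2$, and the additive error is then converted into the multiplicative gauge $h(r)=cr$ by the elementary lower bound $\H^1(K\cap W_1)+\H^1(\widetilde{\varphi}(K\cap W_1))\geq R$, where $R=\diam\left((K\cap W_1)\cup\widetilde{\varphi}(K\cap W_1)\right)$. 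If you wish to keep your direct bridge computation you must supply both the injective-replacement lemma and the interpolation estimate; the collapse-onto-the-sphere device is shorter and requires no control on $\Lip(\psi)$.
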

\begin{proof}
	Let $\{\varphi_t\}_{0\leq t\leq 1}$ be a deformation with 
	$\diam(\widehat{W})=r<\frac{1}{100}$, where $\widehat{W}$ as in
	\eqref{eq:0}. If $\widehat{W}\cap K=\emptyset$, we have nothing to prove. 
	We now suppose that $\widehat{W}\cap K\neq \emptyset$; we can find a point 
	$x_0\in S$, such that $\widehat{W}\subset B(x_0,r)$.

	We consider the Lipschitz function $\phi:\mathbb{R}\to [0,1]$ defined by
	\[
	\phi(t)=\begin{cases}
		0,&t\leq \frac{1}{4}\\
		4(t-\frac{1}{4}),&\frac{1}{4}<t\leq \frac{1}{2}\\
		1,&\frac{1}{2}<t\leq 2\\
		-4(t-2)+1,&2<t\leq \frac{9}{4}\\
		0,&t>\frac{9}{4}.
	\end{cases}
	\]

	We consider $\pi:\mathbb{R}^{n}\to \mathbb{R}^{n}$ defined by
	\[
	\pi(x)=\left( 1-\phi(\left\vert x \right\vert)
	\right)x+\phi(\left\vert x \right\vert)\frac{x}{\left\vert x \right\vert};
	\]
	when $\left\vert x \right\vert\leq \frac{1}{4}$ or $\left\vert x
	\right\vert\geq \frac{9}{4}$, $\pi(x)=x$; when $\frac{1}{2}\leq
	\left\vert x \right\vert\leq 2$, $\pi(x)=\frac{x}{\left\vert x
	\right\vert}$. Also $\pi$ is a Lipschitz map with
	\begin{equation}\label{eq:Lipschitzconstant}
		\Lip\left( \pi\vert_{B(x_0,r)} \right)\leq \frac{1}{1-r}.
	\end{equation}

	We put $\widetilde{\varphi}=\pi\circ\varphi_1$; then 
	\[
	\widetilde{\varphi}(\partial B(0,1)\cap \Omega)\subset \partial B(0,1)\cap
	\Omega.
	\]

	For $\varepsilon>0$ small, we consider the Lipschitz map $\psi_{\varepsilon}$
	defined by 
	\[
	\psi_{\varepsilon}(x)=\left( 1-\widetilde{\phi}_{\varepsilon}(\left\vert x \right\vert)
	\right)x+\widetilde{\phi}_{\varepsilon}(\left\vert x \right\vert)\left\vert x
	\right\vert\widetilde{\varphi}\left(\frac{x}{\left\vert x \right\vert}\right),
	\]
	where $\widetilde{\phi}_{\varepsilon}:\mathbb{R}\to [0,1]$ given by
	\[
	\widetilde{\phi}_{\varepsilon}(t)=\begin{cases}
		1,&t\leq 1\\
		-\frac{1}{\varepsilon}(t-1)+1,& 1<t\leq 1+\varepsilon\\
		0,&t\geq 1+\varepsilon.
	\end{cases}
	\]
	It is clear that $\psi_{\varepsilon}(x)=x$ for $\left\vert x \right\vert\geq
	1+\varepsilon$, $\psi_{\varepsilon}(x)=\left\vert x \right\vert
	\widetilde{\varphi}\left( \frac{x}{\left\vert x \right\vert} \right)$ for 
	$\left\vert x \right\vert\leq 1$.

	We consider the map $\widetilde{\pi}:\mathbb{R}^{3}\to\mathbb{R}^{3}$ given by
	\[
	\widetilde{\pi}(x)=\begin{cases}
		x,&0\leq \left\vert x \right\vert\leq 1\\
		\frac{x}{\left\vert x \right\vert}, &1<\left\vert x \right\vert\leq 2\\
		(2\left\vert x \right\vert-3)\frac{x}{\left\vert x \right\vert},&
		2<x\leq 3\\
		x,&\left\vert x \right\vert\geq 3,
	\end{cases}
	\]
	it is Lipschitz, thus the map
	$\widetilde{\psi_{\varepsilon}}:=\widetilde{\pi}\circ\psi_{\varepsilon}$ is
	also Lipschitz. It is easy to see that 
	$\widetilde{E}:=\widetilde{\psi}_{\varepsilon}(E)$ is a competitor of $E$,
	because that $\{ \varphi_t' \}_{0\leq t\leq 1}$ defined by 
	$\varphi_t'=(1-t)\id+t\widetilde{\psi_{\varepsilon}}$ is a deformation.
	We will compare $\widetilde{E}$ with $E$. Since $E$ is a cone,
	$\widetilde{\pi}(x)$ lie in the line through $0$ and $x$, $\widetilde{\pi}$
	is the radial projection into the sphere $\partial B(0,1)$ on the annulus 
	$1\leq \left\vert x \right\vert\leq 2$, and $\widetilde{\psi}_{\varepsilon}$ is
	identity out of the ball $B(0,1+\varepsilon)$, we can get that $\widetilde{E}$
	and $E$ coincide out of the ball $\overline{B(0,1)}$. Since $E$ is minimal,
	we have that 
	\begin{equation}\label{eq:comparemeasure}
		\H^{2}(E\cap \overline{B(0,1)})\leq \H^{2}(\widetilde{E}\cap
		\overline{B(0,1)}).
	\end{equation}
	Recall that $\widehat{W}\subset B(x_0,r)$; if we put 
	\begin{equation}\label{eq:assumption}
		R=\diam((W_1\cap K)\cup \widetilde{\varphi}(W_1\cap K)),
	\end{equation}
	where $W_1=\{ x\mid \varphi_1(x)\neq x \}$, then $R\leq r$, thus
	on the sphere $\partial B(0,1)$, we can see that $\widetilde{E}$
	and $E$ coincide out of $B(x_0,R)$, thus we can easily get that 
	\begin{equation}\label{eq:measureonsphere}
		\H^2(\widetilde{E}\cap \partial B(0,1))=\H^2(\widetilde{E}\cap \partial B(0,1)\cap
		B(x_0,R))\leq 4\pi R^2.
	\end{equation}
	Applying Theorem 3.2.22 in \cite{Federer:1969}, we have 
	\begin{equation}\label{eq:calculation}
		\begin{aligned}
			\H^2(E\cap \overline{B(0,1)})&=\H^2(E\cap B(0,1))+\H^2(E\cap \partial
			B(0,1))\\
			&=\int_{0}^{1}\H^1(E\cap \partial B(0,t))d t\\
			&=\left( \int_{0}^{1}tdt \right)\H^1(E\cap \partial B(0,1))\\
			&=\frac{1}{2}\H^1(E\cap \partial B(0,1)).
		\end{aligned}
	\end{equation}
	By the construction of $\widetilde{E}$, we know that $\widetilde{E}$ coincide with a
	cone in the ball $B(0,1)$, thus the same reason as above, we have 
	\[
	\H^2(\widetilde{E}\cap
	\overline{B(0,1)})=\frac{1}{2}\H^1(\widetilde{\varphi}(E\cap
	B(0,1)))+\H^2(\widetilde{E}\cap \partial B(0,1)).
	\]
	We combine this with \eqref{eq:comparemeasure}, \eqref{eq:measureonsphere}
	and \eqref{eq:calculation}, and get that 
	\[
	\H^1(K)\leq \H^1(\widetilde{\varphi}(K))+8\pi R^2.
	\]
	By our construction of $\widetilde{\varphi}$, we have that for any $x\in\partial
	B(0,1)$, if $\varphi_1(x)=x$, i.e., $x\not\in W_1$, then $\widetilde{\varphi}(x)=x$, 
	thus $K\setminus W_1=\widetilde{\varphi}(K\setminus W_1)$. 
	We get that
	\begin{equation}\label{eq:moreprecise}
		\begin{aligned}
			\H^1(K\cap W_1)&=\H^1(K)-\H^1(K\setminus W_1)\\
			&\leq \H^1(\widetilde{\varphi}(K))-\H^1(K\setminus W_1)+8\pi R^2\\
			&\leq \H^1(\widetilde{\varphi}(K\cap W_1))+8\pi R^2.
		\end{aligned}
	\end{equation}
	Since $R=\diam((W_1\cap K)\cup \widetilde{\varphi}(W_1\cap K))$,
	we can show that 
	\[
	\H^1(K\cap W_1)+\H^1(\widetilde{\varphi}(K\cap W_1))\geq R,
	\]
	combine this with \eqref{eq:moreprecise}, and get that 
	\[
	\H^1(K\cap W_1)\leq \frac{1+8\pi R}{1-8\pi R}\H^1(\widetilde{\varphi}(K\cap
	W_1))\leq (1+100R)\H^1(\widetilde{\varphi}(K\cap W_1)),
	\]
	and by \eqref{eq:assumption} and \eqref{eq:Lipschitzconstant}, 
	\[
	\H^1(K\cap W_1)\leq \frac{1+100R}{1-r}\H^1(\varphi_1(K\cap W_1))\leq
	(1+200r)\H^1(\varphi_1(K\cap W_1)),
	\]
	the result immediately follows.
\end{proof}

\begin{proposition}\label{prop:onsphere}
	Let $\Omega$, $L_{1}$ be as in \eqref{simple}. Let $E\subset \Omega$ be a 
	minimal cone, and set $K=E\cap \partial B(0,1)$. Then $K$ consists of arcs 
	$C_{i}$ of great circles. These arcs can only meet at their extremities.
	For each extremity, if it is not in $L_1$, then it is a common extremity of
	exactly three arcs which meet with $120^{\circ}$ angles.
\end{proposition}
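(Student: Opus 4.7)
My plan is to reduce the claim to a one-dimensional almost-minimality statement on the sphere, apply Morgan's Theorem \ref{thm:regularityonsphere} to obtain smooth arcs meeting in threes with $120^\circ$ angles, and then upgrade these arcs to arcs of great circles by invoking the cone structure of $E$.

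First I would apply Proposition \ref{prop:regularityonshpere}, which tells us that $K$ is $A_{+}$-sliding-almost-minimal with gauge $h(r)=cr$ for $r<1/100$. If $\varphi:\mathbb{R}^3\to\mathbb{R}^3$ is any Lipschitz map with $W\cup\varphi(W)$ lying at positive distance from $L_1$ and of diameter $r<\delta$, then the straight-line homotopy $\{(1-t)\id+t\varphi\}_{0\le t\le 1}$ is automatically a sliding deformation, so
\[
\H^1(S\cap W)\leq (1+cr)\H^1(\varphi(S\cap W)),
\]
where $S:=K\setminus L_1$. This is exactly the statement that $S$ is a one-dimensional $({\bf M},cr,\delta)$-minimal set relative to $B_0=L_1$; working in a local chart on $\partial B(0,1)$ lets us cast this in Euclidean form, with only an inessential change in the constant coming from the bounded curvature of the sphere.

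Theorem \ref{thm:regularityonsphere} then gives that $S$ is a union of $C^{1,1/2}$ curves which may meet only at isolated points of $\partial B(0,1)\setminus L_1$, and at each such meeting point exactly three arcs come together with $120^\circ$ angles. Taking closures in the sphere, any remaining endpoints of these arcs must lie in $K\cap L_1$, which is precisely what the proposition permits. To upgrade a $C^{1,1/2}$ arc $\gamma$ to an arc of a great circle, I would invoke the cone structure of $E$: on a small conic neighbourhood of $\gamma$ staying away from both $0$ and $L_1$, $E$ coincides with the smooth two-dimensional cone $C_\gamma=\{t\gamma(s):t>0,\ s\in I\}$, and by sliding minimality of $E$ this cone is an Almgren minimal surface in an open subset of $\mathbb{R}^3\setminus L_1$. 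But a smooth cone over a spherical curve $\gamma$ has vanishing mean curvature if and only if $\gamma$ is a geodesic of $\partial B(0,1)$, hence an arc of a great circle.

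The main technical hurdle is packaging the $A_{+}$-sliding-almost-minimality of $K$ on the sphere into exactly the Euclidean $({\bf M},cr,\delta)$-minimality required by Morgan's theorem; once the reduction to a local Euclidean chart is made, the interior regularity statement together with the geodesic characterization of smooth minimal cones finishes the argument essentially automatically.
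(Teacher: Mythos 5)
Your argument is correct and follows essentially the same route as the paper: Proposition \ref{prop:regularityonshpere} gives the $A_{+}$-sliding-almost-minimality of $K$, which is converted into Morgan's $({\bf M},cr,\delta)$-minimality of $S=K\setminus L_1$ relative to $L_1$ so that Theorem \ref{thm:regularityonsphere} yields $C^{1,1/2}$ arcs meeting in threes at $120^{\circ}$ away from $L_1$, and then the cone structure of $E$ upgrades each arc to an arc of a great circle. The only cosmetic difference is that the paper delegates this last step to the proof of Proposition 14.1 in \cite{David:2009}, whereas you carry out the stationarity/geodesic argument directly; the content is the same.
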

A point in $K\setminus L_1$ is called to be a $Y$-point if it is a common extremity
of exactly three curves which meet with $120^{\circ}$ angles. 
\begin{proof}
	Applying Theorem \ref{thm:regularityonsphere} and Proposition
	\ref{prop:regularityonshpere},
	we can get $K$ consists of $C^{1,1/2}$ curves, these curves only
	meet at their extremities. For each extremity, if it is not in $L_1$, then
	it is a common extremity of exactly three curves which meet with
	$120^{\circ}$. For any point $x$ in the interior of such a curves $C_j$, by
	the same proof as in \cite[Proposition 14.1]{David:2009}, we can get that
	there is a neighborhood $U_x$ such that $U_x\cap C_j$ is an arc of great
	circles. From this, we can immediately deduce the result. 
\end{proof}

\begin{lemma}\label{le:BPC}
	Let $\Omega,L_1, E, K$ be as in the proposition above. For any $x\in L_1$, we
	denote by $\Omega_x$ the half plane through $0$ which is perpendicular to
	$L_1$ and the straight line joining $x$ and $0$. Then, for any point $x\in
	K\cap L_1$, any blow-up limit of $K$ at $x$ is a sliding minimal cone in
	$\Omega_x$ with sliding boundary $L_x=\Omega_x\cap L_1$.
\end{lemma}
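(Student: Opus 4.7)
My plan is to run the standard blow-up argument for sliding almost minimal sets, using Proposition~\ref{prop:regularityonshpere} to supply the almost-minimality of $K$, and then to use the fact that $K\subset\partial B(0,1)$ to locate the limit inside the tangent half-plane $\Omega_x$.

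Fix a sequence $r_n\to 0^{+}$ and set $K_n:=r_n^{-1}(K-x)$. Since $x\in L_1$ and both $\Omega$ and $L_1$ are translates of linear subspaces through $x$, we have $r_n^{-1}(\Omega-x)=\Omega$ and $r_n^{-1}(L_1-x)=L_1$, so the ambient half-space and sliding boundary are preserved under blow-up. By Proposition~\ref{prop:regularityonshpere}, $K$ is $A_+$-sliding-almost-minimal with gauge $h(r)=cr$ for $r<\tfrac{1}{100}$, so $K_n$ is $A_+$-sliding-almost-minimal with gauge $h_n(r)=c\,r_n r$, which tends to $0$ locally uniformly. The standard closure and compactness theory for sliding almost minimal sets (see \cite{David:2014}) then yields, after passing to a subsequence, a local Hausdorff limit $K_\infty\subset\Omega$ that is sliding minimal in $\Omega$ with boundary $L_1$. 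The almost monotonicity \eqref{eq:AMDS} forces $\theta_{K_\infty}(0,\cdot)$ to be constant, so $K_\infty$ is a cone centered at $0$. Moreover, since $K\subset\partial B(0,1)$, the rescaled spheres $r_n^{-1}(\partial B(0,1)-x)$ converge in local Hausdorff distance to the tangent plane $x^{\perp}$, and hence $K_\infty\subset x^{\perp}\cap\Omega=\Omega_x$, while $L_1\cap\Omega_x=L_x$.

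It then remains to upgrade ``sliding minimal in $\Omega$ with boundary $L_1$'' to ``sliding minimal in $\Omega_x$ with boundary $L_x$''. Given any sliding $\delta$-deformation $\{\psi_t\}$ in a bounded open subset $V\subset\Omega_x$ respecting $L_x$, I would extend it by taking the product with the identity in the direction of $x$: writing $\mathbb{R}^3=x^{\perp}\oplus\mathbb{R}x$ and $y=y'+sx$, set $\widetilde\psi_t(y):=\psi_t(y')+sx$, cut off in the $s$-variable to keep $\widehat W$ relatively compact. If $y\in L_1$ then $y_3=0$ forces $y'\in L_x$, so $\psi_t(y')\in L_x$ and hence $\widetilde\psi_t(y)\in L_x+\mathbb{R}x\subset L_1$; thus $\widetilde\psi$ is a sliding deformation in $\Omega$ respecting $L_1$. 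Because $K_\infty\subset\Omega_x$, the non-identity set of $\widetilde\psi$ on $K_\infty$ coincides with that of $\psi$, and the sliding minimality of $K_\infty$ in $\Omega$ transfers to give the desired inequality for $\psi$, proving the lemma.

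The main obstacle I anticipate is the stability-of-minimality step in passing from the $K_n$ to $K_\infty$: one has to realise competitors for $K_\infty$ as approximate competitors for $K_n$, picking up an error coming from the curvature of the sphere. This is the analogue, in our spherically constrained setting, of the flat closure theorem of \cite{David:2014}; the curvature error is $O(r_n)$ and is absorbed by the vanishing gauge $h_n$.
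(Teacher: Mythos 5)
Your argument is correct, but it takes a genuinely different route from the paper's, so a comparison is in order. The paper never treats $K$ as an almost minimal set in its own right at this point: it blows up the two-dimensional cone $E$ at $x$ (not $K$), invokes Theorem 24.13 of \cite{David:2014} to say the limit $F$ is a sliding minimal cone in $\Omega$, uses the fact that $E$ is a cone through $x\neq 0$ to write $F=D\times F^{\sharp}$ with $D=\mathbb{R}x$ and $F^{\sharp}=F\cap\Omega_x$ sliding minimal in $\Omega_x$, and then identifies the blow-up $Z$ of $K$ with $F^{\sharp}$ by an explicit computation showing that any $z_k\in E$ with $(z_k-x)/r_k\to z\in\Omega_x\setminus\{0\}$ satisfies $(\vert z_k\vert-1)/r_k\to 0$, so that the points $z_k/\vert z_k\vert\in K$ converge to the same $z$. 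You instead apply the blow-up machinery directly to $K$, which is legitimate because Proposition \ref{prop:regularityonshpere} (proved independently of this lemma, so there is no circularity) makes $K$ a one-dimensional $A_{+}$-sliding-almost-minimal set with linear gauge; rescaling turns the gauge into $cr_n r\to 0$, the closure theorem and almost monotonicity give a sliding minimal cone $K_\infty$ in $\Omega$ with boundary $L_1$, the inclusion $K\subset\partial B(0,1)$ localizes $K_\infty$ to $x^{\perp}\cap\Omega=\Omega_x$, and the cut-off cylinder extension of competitors transfers minimality from $(\Omega,L_1)$ to $(\Omega_x,L_x)$. Your last step is in fact lighter than the paper's corresponding step: slicing the product $D\times F^{\sharp}$ requires a Fubini-type argument, whereas your $K_\infty$ lies entirely in the slice, so $\widetilde\psi_1(K_\infty)=\psi_1(K_\infty)$ and the two $\H^1$ inequalities coincide verbatim. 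The price is that you need the full limit theorem applied to the sequence $K_n$ rather than only the packaged blow-up statement for $E$, plus the input of Proposition \ref{prop:regularityonshpere}, which tacitly assumes $K\setminus L_1\neq\emptyset$ (the degenerate case $E\subset L_1$, where $E=L_1$ and the blow-up of $K$ at $x$ is $L_x$, must be dispatched separately, though it is trivial). Finally, the ``curvature error'' you worry about in the closure step is a non-issue for exactly the reason you give: the curvature of the sphere was already absorbed into the gauge $h(r)=cr$ when Proposition \ref{prop:regularityonshpere} was proved, so the $K_n$ are almost minimal in the flat half-space and the standard theorem applies without modification.
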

The proof of this Lemma is almost the same as in the first part of the proof 
of Theorem 8.23 in \cite{David:2009}.
\begin{proof}
	Without loss of generality, we assume that $x=(1,0,0)$. Then $\Omega_x=\{
	(0,x_2,x_3)\mid x_2\in\mathbb{R},x_3\geq 0 \}$, $L_x=\{ (0,x_2,0)\mid x_2\in
	\mathbb{R}\}$. Let $r_k>0$, $r_k\to 0$. Suppose that 
	\[
	\frac{1}{r_k}(K-x)\to Z
	\]
	and 
	\[
	\frac{1}{r_k}(E-x)\to F.
	\]
	It is quite easy to see that $Z\subset \Omega_x$ and $Z\subset F$.
	Theorem 24.13 in \cite{David:2014} says that $F$ is a sliding minimal cone
	in $\Omega$ with sliding boundary $L_1$. We denote by $D$ the line though
	the points $0$ and $x$. As in \cite{David:2009}, page 140, we can get that 
	\[
	F=D\times F^{\sharp}\text{ where } F^{\sharp}=F\cap \Omega_x.
	\]
	Similarly, we can get $F^{\sharp}$ is a sliding minimal cone in $\Omega_x$
	with sliding boundary $L_x$. Let us check that $Z=F^{\sharp}$. It suffices to
	show that $F^{\sharp}\subset Z$, since we already know that 
	$Z\subset F^{\sharp}$. We take any $z\in F^{\sharp}$, then there exists a
	sequence $z_k\in E$ such that 
	\begin{equation}\label{eq:BPC5}
	\frac{z_k-x}{r_k}\to z.
	\end{equation}
	Since $z\in \Omega_x$, and $\Omega_x$ is perpendicular to the line $D$ which
	pass through the points $0$ and $x$, we get that the angles between the line
	$D$ and the segments which join the pints $x$ and $z_k$ tend to
	$\frac{\pi}{2}$, i.e. 
	\[
		\theta_k={\rm Angle}(z_k-x,D)\to \frac{\pi}{2}.
	\]
	If $z=0$, by \eqref{eq:BPC5}, we get that 
	\begin{equation}\label{eq:BPC6}
	\frac{\left\vert z_k\right\vert-1}{r_k}\to 0.
	\end{equation}
	If $z\neq 0$, we will show that
	\begin{equation}\label{eq:BPC7}
	\frac{\left\vert z_k\right\vert-1}{\left\vert
	z_k-x\right\vert}\to 0.
	\end{equation}
	We put $\gamma_k={\rm Angle}(z_k,x)$. Then $\gamma_k\to 0$. Since $z\neq 0$,
	we have that $|z_k-x|\neq 0$ and $\gamma_k\neq 0$ for $k$ large. We consider
	the triangle formed by the vertices $0$, $x$ and $z_k$. We get that 
	$|z_k-x|\geq |z_k|\sin\gamma_k$. Thus 
	\[
		\left\vert \frac{\cos \gamma_k -1}{|z_k-x|} \right\vert\leq
		\frac{1-\cos\gamma_k}{|z_k|\sin\gamma_k}\to 0.
	\]
	Since 
	\[
		\langle x,z_k-x \rangle=|x||z_k-x|\cos\theta_k=|z_k-x|\cos\theta_k
	\]
	and 
	\[
			\langle x,z_k-x \rangle=\langle x,z_k \rangle-|x|^2=|z_k|\cos\gamma_k-1,
	\]
	we get that 
	\[
		|z_k|\cos\gamma_k=1+|z_k-x|\cos\theta_k.
	\]
	Hence 
	\[
		\frac{|z_k|-1}{|z_k-x|}=\frac{\cos\theta_k}{\cos\gamma_k}+
		\frac{1-\cos\gamma_k}{|z_k-x|\cos\gamma_k}\to 0.
	\]

	In the case $z\neq 0$, we get, from \eqref{eq:BPC5} and \eqref{eq:BPC7}, that
	\begin{equation}\label{eq:BPC9}
	\frac{\left\vert z_k\right\vert-1}{r_k}\to 0.
	\end{equation}
	Thus, from \eqref{eq:BPC6} and \eqref{eq:BPC9}, we get that
	\[
	\frac{1}{r_k}\left( \frac{z_k}{\left\vert z_k\right\vert}
	-x\right)=\frac{1-\left\vert z_k\right\vert}{r_k}\cdot\frac{z_k}{\left\vert
	z_k\right\vert}+\frac{z_k-x}{r_k}\to z.
	\]
	But we know that $\frac{z_k}{\left\vert z_k\right\vert}\in K$, thus $z\in Z$.

\end{proof}

\begin{lemma}\label{lemma:boundaryofcone}
	Let $\Omega,L_1, E$ be as in the proposition above, $S=K\setminus L_1$. 
	For any $x\in \overline{S}\cap L_1$, there is a radius $r>0$ such that there 
	is no $Y$-point in $S\cap B(x,r)$. Moreover, if a blow-up limit of $K$ at 
	$x$ is a cone $V_{\beta,0}$ for some $\beta\in (0,\frac{\pi}{6}]$, then 
	$K\cap B(x,r)$ is a union of two arcs of great circles meeting at $x$;
	in the other cases, $S\cap B(x,r)$ is an arc of great circle which perpendicular
	to $L_1$.
\end{lemma}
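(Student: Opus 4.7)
The strategy is to transfer the classification of one-dimensional sliding minimal cones in the half-plane $\Omega_x$ (Lemma~\ref{listofonedimensionalminimalcone}) to blow-ups of $K$ at $x$ via Lemma~\ref{le:BPC}, and then read off the local structure of $K$ near $x$ using the $C^{1,1/2}$ regularity from Proposition~\ref{prop:onsphere}.

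First, I would rule out Y-points accumulating at $x$. Suppose toward contradiction that Y-points $y_n\in S$ converge to $x$. Set $r_n=|y_n-x|$ and extract a Hausdorff-convergent subsequence $K_n := r_n^{-1}(K-x) \to Z$. By Lemmas~\ref{le:BPC} and~\ref{listofonedimensionalminimalcone}, $Z \in \{L_x,\ P_0,\ P_0\cup L_x,\ V_{\beta,0}\}$. After a further subsequence, the unit vectors $y_n^* := r_n^{-1}(y_n-x)$ converge to some $y^*$ with $|y^*|=1$. Near $y_n$ the set $K$ consists of three $C^{1,1/2}$ great circle arcs meeting at $120^\circ$; since the great-circle curvature is bounded, rescaling a fixed neighborhood of $y_n$ by $r_n^{-1}$ flattens these arcs, and their union passes to the Hausdorff limit as three straight rays issuing from $y^*$ at $120^\circ$ inside $Z$. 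But none of the admissible cones has a triple junction at a point different from the origin, contradiction.

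Next, fix $r>0$ small enough that $S\cap B(x,r)$ contains no Y-point. By Proposition~\ref{prop:onsphere}, $K\cap B(x,r)$ consists of finitely many great circle arc germs whose only possible endpoints inside $B(x,r)$ lie on $L_1$, together with possibly a piece of the equator $L_1\cap \partial B(0,1)$ around $x$. Let $\eta_1,\dots,\eta_m$ be the arc germs of $S$ having $x$ as an endpoint, and let $v_1,\dots,v_m$ be their unit tangent vectors at $x$; a direct check on the unit sphere shows $v_i\in \Omega_x\setminus L_x$, because an arc tangent to $L_x$ at $x$ would lie in the equator and hence in $L_1$. Since each $\eta_i$ is $C^1$ at $x$, the blow-up of $K$ at $x$ is unique and equals
\[
Z_x \;=\; \bigcup_{i=1}^m \{t v_i : t\geq 0\} \;\cup\; \Lambda,
\]
where $\Lambda=L_x$ if the equator is locally contained in $K$ and $\Lambda=\emptyset$ otherwise. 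Matching $Z_x$ with the classification in Lemma~\ref{listofonedimensionalminimalcone}, and using that $x\in \overline{S}$ to exclude $Z_x=L_x$: if $Z_x=V_{\beta,0}$ for some $\beta\in (0,\pi/6]$, then necessarily $\Lambda=\emptyset$ and $m=2$ with $v_1,v_2$ the two arms of the $V$, so $K\cap B(x,r)=\eta_1\cup\eta_2$ is the union of two great circle arcs meeting at $x$; otherwise $Z_x\in\{P_0,\ P_0\cup L_x\}$, forcing $m=1$ and $v_1\perp L_x$, so $S\cap B(x,r)=\eta_1$ is a single great circle arc perpendicular to $L_1$ at $x$. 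Shrinking $r$ further if needed to isolate the germs completes the proof.

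The main obstacle is the first step: justifying that the local $120^\circ$ structure at each $y_n$ really survives the Hausdorff blow-up as a genuine triple junction at $y^*\neq 0$ inside $Z$. This relies on the uniform $C^{1,1/2}$ regularity from Theorem~\ref{thm:regularityonsphere} together with the uniform curvature bound for great circle arcs on the unit sphere, which together guarantee that the rescaled arcs at $y_n$ converge to three straight rays at $y^*$ meeting at $120^\circ$, a configuration that the classification then forbids away from the origin.
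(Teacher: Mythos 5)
Your second step follows essentially the paper's route (classify the blow-up of $K$ at $x$ via Lemma~\ref{le:BPC} and Lemma~\ref{listofonedimensionalminimalcone}, then read off the local picture from the arc structure of Proposition~\ref{prop:onsphere}), but your first step takes a genuinely different route from the paper, and that is where the argument has a real gap. The paper does not argue by compactness at all: it proves that $S$ contains only \emph{finitely many} $Y$-points in total, by showing that each component $A$ of $\mathbb{S}^{+}\setminus S$ is convex and using Gauss--Bonnet to bound the number of its corners and then the number of components by area. You instead blow up along a sequence of $Y$-points $y_n\to x$ and claim that the $120^{\circ}$ junction at $y_n$ ``passes to the Hausdorff limit as three straight rays issuing from $y^{*}$.'' This is exactly the step that fails as written: the three great-circle arcs emanating from $y_n$ carry no lower bound on their length in terms of $r_n=|y_n-x|$ (other $Y$-points could lie at distance $o(r_n)$ from $y_n$), so after rescaling by $r_n^{-1}$ the whole junction may collapse to the single point $y^{*}$, and Hausdorff convergence by itself does not prevent a degenerating network from converging to a straight segment. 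The uniform $C^{1,1/2}$ regularity and the curvature bound for great circles control the shape of each arc but say nothing about its length, so they do not close this gap, even though you correctly identify it as the main obstacle. The natural repair is measure-theoretic rather than geometric: $\theta_K(y_n)=\frac{3}{2}$ at a $Y$-point, and for blow-up limits of almost minimal sets one has convergence of $\H^1$ and hence upper semicontinuity of density along the sequence, so $\theta_Z(y^{*})\geq\frac{3}{2}$; but every cone in the list $\{L_x,\,P_0,\,P_0\cup L_x,\,V_{\beta,0}\}$ has density $1$ (or $0$) at every point other than the origin, a contradiction. Either supply that density argument or adopt the paper's Gauss--Bonnet count.

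A smaller unaddressed point in your second step: you identify the blow-up of $K$ at $x$ with $\bigcup_{i}\{tv_i:t\geq 0\}\cup\Lambda$, which presumes that $K\cap B(x,r)$ is exhausted by finitely many arc germs ending at $x$ plus a piece of the equator. Proposition~\ref{prop:onsphere} alone does not exclude infinitely many short arcs of $S$ accumulating at $x$ without containing $x$; ruling this out again requires comparing $K$ with its blow-up limit at small scales (or the global finiteness of the set of $Y$-points). The paper is also terse here, but since your conclusion rests entirely on this identification, you should justify why no part of $K$ near $x$ is missed.
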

\begin{proof}
	We will prove that there ais only finite number of $Y$ points in $S$.
	We denote	$\mathbb{S}^{+}=\{ (x_1,x_2,x_3)\in \mathbb{S}^2 \mid x_3>0\}$. 
	Let $A$ be a connected component of $\mathbb{S}^{+}\setminus S$,
	$\overline{A}$ be the closure of $A$
	
	If $\overline{A}\cap L_1=\emptyset$, then $\overline{A}$ is a convex. Indeed,
	we get, from Proposition \ref{prop:onsphere}, that each $Y$-point in $S$ 
	must connect three arcs, these three arcs meet with $120^{\circ}$. Thus at
	each corner of $\partial A$, the interior angle of $\partial A$ at this point 
	must have be $120^{\circ}$, and $\overline{A}$ must be convex. 
	
	Now, if $\overline{A}\cap L_1\neq \emptyset$, $\overline{A}$ is also convex. 
	For the same reason, if the vertex of a corner of $\partial A$ is contained
	in $\mathbb{S}^{+}$, then the interior angle of $\partial A$ at this point 
	must have be $120^{\circ}$. If the vertex of a corner of $\partial A$ is
	contained in $L_1$, then the the interior angle of $\partial A$ at this
	point is no more than $180^{\circ}$. Thus $\overline{A}$ must be convex.

	The number corners in
	$\partial A$ must be finite. Indeed, there are at most four corners which 
	touch the boundary $L_1$, because $A$ is convex. If there are infintely many
	corners in $\partial A$, then we can very easily to find $8$ corners, saying
	at points $B_1,B_2,\ldots, B_8$, such that these $8$ points are
	contained in $S$, and the geodesic connecting $B_i$ and $B_{i+1}$ is contained in
	$\partial A$, $i=1,2,\ldots,7$. We now consider the convex spherical polygon
	$B_1B_2\cdots B_8$. By using the Gauss-Bonnet theorem, for example see 
	\cite[Theorem \uppercase\expandafter{\romannumeral5}.2.7]{Chavel:2006}, we get that 
	\begin{equation}\label{eq:BC2}
		\alpha_1+\alpha_2+\frac{\pi}{3}\times 6+{\rm Area}(\overline{A})=2\pi,
	\end{equation}
	where $\alpha_1$ and $\alpha_2$ are the exterior angle of the
	corners of $\partial \overline{A}$ at point $B_1$ and $B_8$ respectivly. But
	that is impossible, the equation \eqref{eq:BC2} gives an absurdity. 

	If $\overline{A}$ are contained in $\mathbb{S}^{+}$, we assume that
	$\partial A$ has $n$ cornes, then Gauss-Bonnet theorem says that 
	\[
	\frac{n\pi}{3}+{\rm Area}(\overline{A})=2\pi,
	\]
	thus $n<6$, and ${\rm Area}\geq \frac{\pi}{3}$. Since the totall area of
	$\Omega\cap \partial B(0,1)$ is $\pi$, there are at most $6$ such connected
	components. Thus there is only a finite number of $Y$-point in
	$S$; otherwise, it should be infintely many connected component of 
	$\mathbb{S}^{+}\setminus S$ such that its corners does not touch $L_1$.

	Since there is only a finite number of $Y$-point in
	$S$, we get that for any $x\in \overline{S}\cap L_1$, there is a
	radius $r_x>0$ such that there is no $Y$-point in $S\cap B(x,r_x)$.

	Since $K$ is sliding almost minimal, any blow-up limit of $K$ at $x$ is a
	sliding minimal cone, denote by $Z$, and 
	\[
	\theta_K(x)=\H^1(Z\cap B(0,1)).
	\]
	If $Z$ is a cone like $V_{\beta,0}$ for some $\beta\in (0,\frac{\pi}{6}]$,
	then $K\cap B(0,r_x)$ must be two arcs, each of these two arcs is a part of 
	a great cicle, and these two arcs meet at $x$ with angle $\pi-2\beta$.
	If $Z$ is a half line perpendicular to $L_1$, then $K\cap B(0,r_x)$ is an
	arc which is a part of a great cicle, perpendicular to $L_1$ and through
	$x$. If $Z$ is the union of a line in $L_1$ and a half line which is
	perpendicular to $L_1$, then $K$ is the union of $B(0,r_x)\cap \{
	(x_1,x_2,0)\mid x_1^2+x_2^2=1 \}$ and an arc which is a part of a great
	cicle, perpendicular to $L_1$ and through $x$.
\end{proof}

\begin{lemma}\label{le:PPC}
	Let $\Omega$, $L_1$ be as in \eqref{simple}. Let $E\subset \Omega$ be a 
	sliding minimal cone, $K=E\cap\partial B(0,1)$, $S=K\setminus L_1$. Suppose
	that for each $x\in \overline{S}\cap L_1$, there is a radius $r>0$ such that
	$B(x,r)\cap S$ is an arc of a great circle which is perpendicular to $L_1$.
	Then there are only there possible kinds of $S$, that is,
	$\overline{S}=Z\cap \partial B(0,1)$, where $Z$ is a 
	sliding minimal cone of type of one of $\mathbb{P}_{+}$, 
	$\mathbb{Y}_{+}$ and $\mathbb{T}_{+}$. And hence, $E=Z$ or
	$E=Z'\cup L_1$ where $Z'$ is a cone of type $\mathbb{P}_{+}$ or
	$\mathbb{Y}_{+}$. 
\end{lemma}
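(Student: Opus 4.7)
The natural strategy is to double the configuration across $L_1$ and reduce to the classification of $2$-dimensional Almgren minimal cones in $\mathbb{R}^{3}$. Write $\sigma$ for the reflection through $L_1$ and set $\widetilde{S}:=\overline{S}\cup\sigma(\overline{S})\subset\partial B(0,1)$. Proposition \ref{prop:onsphere} already gives that, away from $L_1$, $S$ consists of arcs of great circles meeting only at $Y$-points with $120^{\circ}$ angles; the standing hypothesis says that at each $x\in\overline{S}\cap L_1$ the set $S$ is a single arc of a great circle hitting $L_1$ orthogonally. The two arcs at such an $x$ (coming from $S$ and $\sigma(S)$) therefore glue into one smooth geodesic arc through $x$. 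Hence $\widetilde{S}$ is a $\sigma$-invariant closed subset of the full sphere whose every point is either interior to a smooth great-circle arc or a $Y$-junction of three such arcs at $120^{\circ}$.

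I would next show that the cone $\widetilde{Z}:=\bigcup_{t\geq 0}t\widetilde{S}$ is Almgren minimal in $\mathbb{R}^{3}$, by running the symmetrization argument of Lemma \ref{le:PLMC} in reverse: given any competitor $F$ of $\widetilde{Z}$, one folds via $\pi(x_1,x_2,x_3)=(x_1,x_2,|x_3|)$ to produce a $\sigma$-invariant candidate $\pi(F)\cup\sigma(\pi(F))$ whose measure does not exceed that of $F$ and whose restriction to $\Omega$ is a sliding competitor of $E$ (or of $E\setminus L_1$ when $L_1\subset E$), so that sliding minimality of $E$ forces $\mathcal{H}^{2}(\widetilde{Z}\cap W)\leq\mathcal{H}^{2}(F)$. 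By Jean Taylor's classification of $2$-dimensional minimal cones in $\mathbb{R}^{3}$, $\widetilde{Z}$ is then a plane, a cone of type $\mathbb{Y}$, or a cone of type $\mathbb{T}$; the $\sigma$-invariance combined with the perpendicularity hypothesis pins down its axis/spine to be orthogonal to $L_1$, since any other $\sigma$-invariant orientation (for example a $\mathbb{Y}$ with spine lying inside $L_1$) would produce arcs of $\widetilde{S}$ hitting $L_1$ at an angle different from $\pi/2$, contradicting the hypothesis. Thus $Z:=\widetilde{Z}\cap\Omega$ is of one of the three types $\mathbb{P}_{+}$, $\mathbb{Y}_{+}$, $\mathbb{T}_{+}$ and $\overline{S}=Z\cap\partial B(0,1)$.

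To read off $E$, use that $E$ is a cone and $K=\overline{S}\cup(K\cap L_1)$: the set $K\cap L_1$ is a closed cone in the equator $L_1\cap\partial B(0,1)$, hence by the analogue of Proposition \ref{listofonedimensionalminimalset} either a finite subset of $\overline{S}\cap L_1$ or the entire equator, yielding $E=Z$ or $E=Z\cup L_1$ respectively. The preceding lemma (non-minimality of $L_1\cup\mathbb{T}_{+}$) excludes the pair ``$Z$ of type $\mathbb{T}_{+}$ and $L_1\subset E$,'' leaving exactly the dichotomy stated in the conclusion. The main technical hurdle is the Almgren minimality of $\widetilde{Z}$: one must verify that the folding construction produces a genuine sliding deformation in $\Omega$ with a Lipschitz time-parameterization, and track measures carefully near the equator, handling separately the case $L_1\subset E$, where $K$ contains the equator but $\overline{S}$ and $\widetilde{S}$ do not.
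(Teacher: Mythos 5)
Your reduction to Taylor's classification via doubling has a genuine gap at its central step: the claim that $\widetilde{Z}=Z\cup\sigma(Z)$ is Almgren minimal in $\mathbb{R}^{3}$. The reflection trick only works in the direction used in Lemma \ref{le:PLMC}: when the double is already known to be Almgren minimal, a sliding competitor of the half reflects to an Almgren competitor of the double, and sliding minimality of the half follows. Your converse fails for two concrete reasons. First, if $F=\varphi_1(\widetilde{Z})$ for an arbitrary deformation $\varphi_1$ of $\mathbb{R}^{3}$, the folded map $\pi\circ\varphi_1$ restricted to $\Omega$ is not a sliding deformation: a point of $L_1\cap\widetilde{Z}$ may be sent by $\varphi_1$ off the plane $L_1$, and its fold then lands in the interior of $\Omega$, so the constraint $\varphi_t(L_1)\subset L_1$ is violated and the sliding minimality of $E$ cannot be invoked against $\pi(F)$. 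Second, even setting that aside, the bookkeeping is off by a factor of $2$: an asymmetric competitor can save area on one side of $L_1$ at the expense of the other, and folding only yields an estimate of the form $\H^2(\widetilde{Z}\cap W)=2\H^2(Z\cap W\cap\Omega)\leq 2\H^2(\pi(F\cap W))\leq 2\H^2(F\cap W)$, not the inequality you need. That the implication ``sliding minimal $\Rightarrow$ the double is Almgren minimal'' is genuinely false, and not merely unproved, is illustrated by $Z=\mathbb{R}\times V_{\pi/6,0}$, which Remark \ref{re:listofminimalcones} asserts to be sliding minimal: its double is a union of four half-planes meeting along a line with consecutive angles $\pi/3$ and $2\pi/3$, which is not an Almgren minimal cone. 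So any correct doubling argument would have to exploit the perpendicularity hypothesis already in the proof of minimality of $\widetilde{Z}$, and your sketch does not.

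The paper avoids this entirely and works directly on the hemisphere: it applies the Gauss--Bonnet theorem to each connected component $A$ of $\mathbb{S}^{+}\setminus K$, whose corners have exterior angle $\pi/3$ at $Y$-points and $\pi/2$ where they touch $L_1$ (by Proposition \ref{prop:onsphere} and the standing hypothesis), deduces that every component has area at least $\pi/3$, and then enumerates the possible numbers of $Y$-points in $\mathbb{S}^{+}$. Zero, one and three $Y$-points give $\mathbb{P}_{+}$, $\mathbb{Y}_{+}$ and $\mathbb{T}_{+}$ respectively; two is excluded by elementary spherical geometry; four and five are excluded by exhibiting explicit Lipschitz competitors (radial projections onto faces of a polyhedron) that strictly decrease area; six or more are excluded by the area count. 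If you wish to keep a classification-by-reduction strategy, you must either carry out this kind of direct combinatorial analysis or prove a bona fide reflection principle for sliding minimal cones meeting $L_1$ orthogonally, which is substantially harder than the folding you describe. The final step of your argument (identifying $K\cap L_1$ and excluding the pair $\mathbb{T}_{+}$ with $L_1$) is consistent with the paper, but it rests on the unproven classification of $\overline{S}$.
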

\begin{proof}
	We put $\mathbb{S}^{+}=\Omega\cap \partial B(0,1) \setminus L_1$.  
	Let $A$ be a connected component of $\mathbb{S}^{+}\setminus K$, $\overline{A}$
	be the closure of $A$.  By Proposition \ref{prop:onsphere} and
	Lemma \ref{lemma:boundaryofcone}, the boundary of $\overline{A}$ is a 
	spherical polygon whose sides are geodesics of the unit sphere.
	Using the Gauss-Bonnet theorem, see \cite[Theorem
	\uppercase\expandafter{\romannumeral5}.2.7]{Chavel:2006}, we get that 
	\begin{equation}\label{eq:angles}
		\alpha_1+\alpha_2+\cdots+\alpha_n+{\rm Area}(\overline{A})=2\pi,
	\end{equation}
	where $\alpha_1,\alpha_2,\cdots,\alpha_n$ are the exterior angle of the
	corners of $\partial \overline{A}$. From Lemma \ref{lemma:boundaryofcone}
	and Proposition \ref{prop:onsphere}, we can see that, if a corner touch 
	$L_1$, then, in the situation of this lemma, the corresponding exterior 
	angle must be $\frac{\pi}{2}$; if a corner do not touch $L_1$, the 
	corresponding exterior angle must be always $\frac{\pi}{3}$. It is quite 
	clear that there are at least two corners on $\partial \overline{A}$, and it 
	cannot happen that there only one corner touching $L_1$. 

	We now consider the equation \eqref{eq:angles}. If $n=2$, then the two
	corners must touch $L_1$, thus $\alpha_1=\alpha_2=\frac{\pi}{2}$, and
	${\rm Aera}(\overline{A})=\pi$. $\overline{A}$ is a quarter of unity sphere.

	Let us split $n=3$ into two cases. If there is no corner touching
	$L_1$, then $\alpha_1=\alpha_2=\alpha_3=\frac{\pi}{3}$, and $\overline{A}$
	is an equilateral polar triangle with ${\rm Area}(A)=\pi$. If it has corner
	on $\partial \overline{A}$, then there are at least two, thus
	$\alpha_1=\alpha_2=\frac{\pi}{2}$, $\alpha_3=\frac{\pi}{3}$, and
	$\overline{A}$ is a isosceles polar triangle with 
	${\rm Area}(A)=\frac{2\pi}{3}$. More precisely, the base of $\overline{A}$
	is an arc contained in $L_1\cap \partial B(0,1)$ with length $\frac{2\pi}{3}$, the 
	vertex opposite to the base is the point $(0,0,1)$.

	Similarly, for $n=4$, we can get two kinds of spherical quadrilaterals, one
	is spherical quadrilaterals with equal angles $\frac{2\pi}{3}$ and with area
	$\frac{2\pi}{3}$, another one is spherical quadrilaterals with one side 
	contained in $L_1\cap \partial B(0,1)$ and with area $\frac{\pi}{3}$.

	We can easily see that $n$ can not be larger than $5$; otherwise, we can
	deduce from \eqref{eq:angles} that ${\rm Area}(\overline{A})\leq 0$, which is
	impossible. For the same reason, when $n=5$, there is only one kind of spherical
	pentagons. That is, a spherical with all of corners are contained in
	$\mathbb{S}^{+}$ and with area $\frac{\pi}{3}$.

	Since each connected component of $\mathbb{S}^{+}\cap \partial B(0,1)\setminus K$ 
	has at least area $\frac{\pi}{3}$, and the total area of $\Omega\cap \partial
	B(0,1)$ is $2\pi$, there are at most six connected component. If there is no
	$Y$ point on $\mathbb{S}^{+}$, $E\cap \mathbb{S}^{+}$ must be a half
	circle which is contained in $\Omega\cap \partial B(0,1)$ and perpendicular
	to $L_1$. Thus $E=Z$ or $E=L_1\cup Z$, where $Z$ is a cone of type
	$\mathbb{P}_{+}$, hence $\overline{S}=Z\cap \partial B(0,1)$.

	If there is only one $Y$ point on $\mathbb{S}^{+}$,
	then each connect component of $\mathbb{S}^{+}\cap \partial B(0,1)\setminus K$
	must be a polar triangle with base contained in $L_1\cap \partial B(0,1)$. 
	By our discussion for $n=3$, we get that each such connected component is an
	isosceles polar triangle with area $\frac{2\pi}{3}$. Thus this $Y$ point
	must be $(0,0,1)$, and $E=Z$ or $E=L_1\cup Z$, where $Z$ is a cone of type
	$\mathbb{Y}_{+}$, hence $\overline{S}=Z\cap \partial B(0,1)$.

	If there are two $Y$ points on $\mathbb{S}^{+}$,
	then there are at least two polar triangles with base contained in 
	$L_1\cap \partial B(0,1)$, and the vertices opposite to the bases must be 
	the point $(0,0,1)$, that is impossible.

	If there are three $Y$ points on
	$\mathbb{S}^{+}$, then these three points must be the vertices of a polar 
	triangle which is contained in $\mathbb{S}^{+}$, and this triangle is an
	equilateral polar triangle with area $\pi$. Each side of this triangle is a
	side of spherical quadrilateral, and the opposite side ot this spherical
	quadrilateral is contained in $L_1\cap \partial B(0,1)$. In this case
	$E=Z$ or $E=L_1\cup Z$, where $Z$ is a cone of type $\mathbb{T}_{+}$,
	thus $\overline{S}=Z\cap \partial B(0,1)$.

	Since each polar triangle contained in $\mathbb{S}^{+}$ has area
	$\pi$, there only one such triangle. If there are four $Y$ point in
	$\mathbb{S}^{+}$, then these four point in $E\cap \mathbb{S}^{+}$ must
	form a spherical quadrilateral, and $\mathbb{S}^{+}\setminus K$ consists of
	five region, that is, five connect component, each of them is a spherical
	quadrilateral, one of them is contained in $\mathbb{S}^{+}$, and each of the
	rest quadrilateral must has one side contained in $L_1$. Without loss of
	generality, we suppose that $(1,0,0)\in \overline{S}$, then those four
	$Y$ points are $(\frac{\sqrt{6}}{3},0,\frac{\sqrt{3}}{3})$,
	$(0,\frac{\sqrt{6}}{3},\frac{\sqrt{3}}{3})$,
	$(-\frac{\sqrt{6}}{3},0,\frac{\sqrt{3}}{3})$ and 
	$(0,-\frac{\sqrt{6}}{3},\frac{\sqrt{3}}{3})$, we denote them by $B_1'$,
	$B_2'$, $B_3'$ and $B_4'$ respectively. In this case, we will show
	that is impossible. We put $B_1=(\frac{\sqrt{6}}{3},0,0)$,
	$B_2=(0,\frac{\sqrt{6}}{3},0)$, $B_3=(-\frac{\sqrt{6}}{3},0,0)$ and 
	$B_4=(0,-\frac{\sqrt{6}}{3},0)$, and denote by $C$ the cube
	$B_1B_2B_3B_4B_1'B_2'B_3'B_4'$, by $\Gamma$ the union of the edges of cube 
	$C$. We denote by $F_0$ the face $B_1'B_2'B_3'B_4'$ of the cube $C$, denote by
	$F_1$ and $F_2$ the rectangle $B_1B_3B_3'B_1'$ and $B_2B_4B_4'B_2'$ 
	respectively. 
\begin{figure}[!htb]
	\centering
	\includegraphics[width=3in]{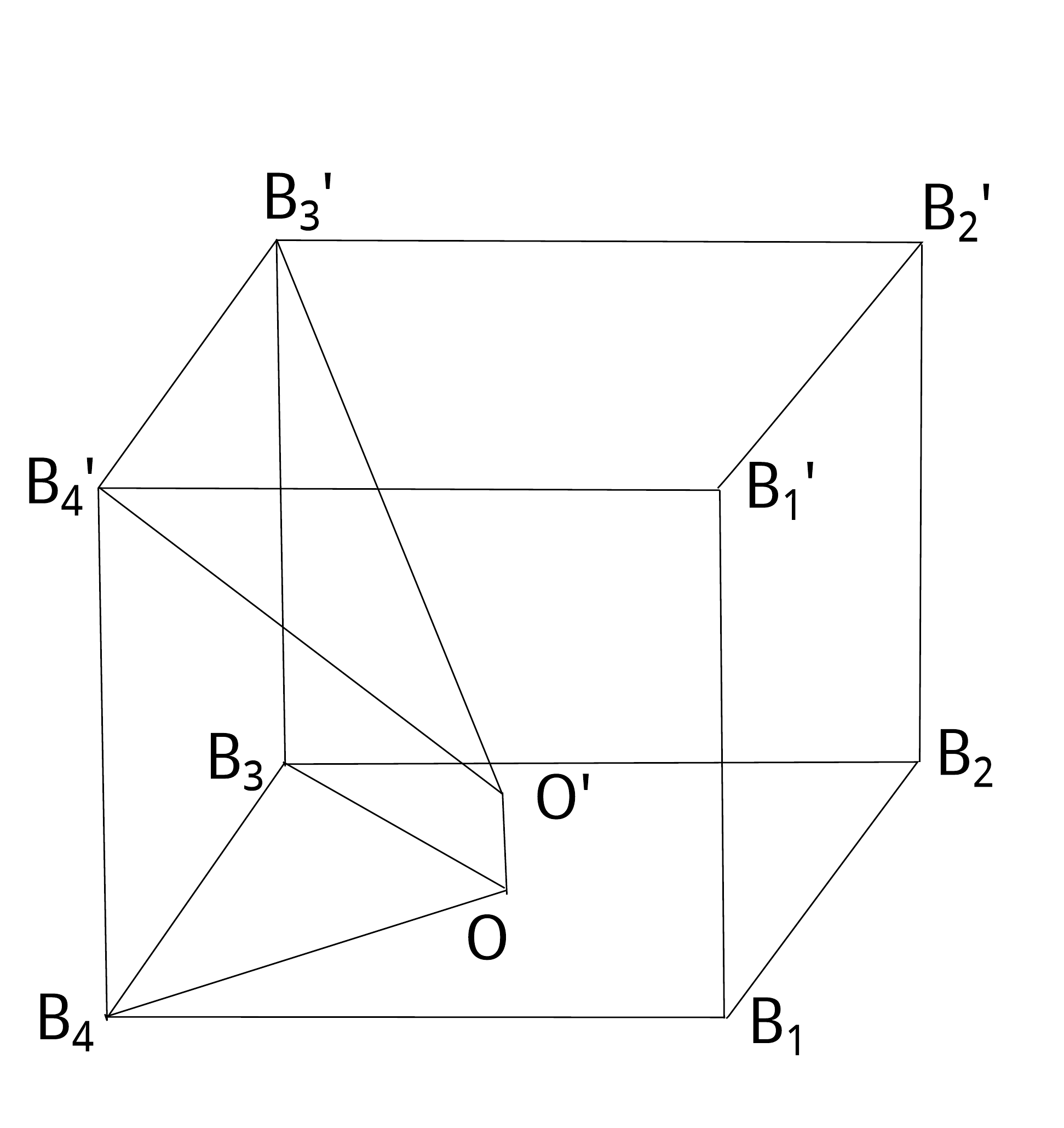}
	\label{fig:polytope}
\end{figure}
	
	We denote by $O$ and
	$O'$ the points $(0,0,0)$ and $(0,0,\frac{\sqrt{3}-\sqrt{2}}{3})$ respectively,
	and denote by $Q_1$, $Q_1$, $Q_3$ and $Q_4$ the polyhedrons 
	$B_1B_2OB_1'B_2'O'$, $B_2B_3OB_2'B_3'O'$, $B_3B_4OB_3'B_4'O'$ and
	$B_4B_1OB_4'B_1'O'$ respectively. We now take points $x_1$, $x_2$, $x_3$ and
	$x_4$ in the interior of pyramids $OB_1B_2B_2'B_1'$, $OB_2B_3B_3'B_2'$,
	$OB_3B_4B_4'B_3'$ and $OB_4B_1B_1'B_4'$ respectively. 

	Let maps $\Pi_{Q_1,x_1}$, $\Pi_{Q_2,x_2}$, $\Pi_{Q_3,x_3}$ and
	$\Pi_{Q_4,x_4}$ be as in \eqref{eq:radialprojection}. We take
	$\psi=\Pi_{Q_4,x_4}\circ\Pi_{Q_3,x_3}\circ\Pi_{Q_2,x_2}\circ\Pi_{Q_1,x_1}$.
	Then $\psi$ is a Lipschitz map. We now consider the competitor 
	$\tilde{E}=\psi(E)$ of $E$.
	
	We denote $B_5=B_1$. By a simple calculation, we can get that 
	\begin{align*}
		\H^2(E\setminus\tilde{E})-\H^2(\tilde{E}\setminus E)&=
		\sum_{i=1}^{4}|OB_i'B_{i+1}'|-\left(\sum_{i=1}^{4}
		O'B_i'B_{i+1}'+\sum_{i=1}^{4}|OO'B_i'| \right)\\
				&= \frac{2\sqrt{2}+4\sqrt{3}-4}{3}>0,
	\end{align*}
	that is contradict to minimality of $E$.
	
	It could not happen that there are at least six $Y$ point in $\mathbb{S}^{+}$,
	because otherwise, there will be at least six spherical quadrilaterals which
	touch $L_1$, but we know that such a quadrilateral has area
	$\frac{\pi}{3}$, and total area of $\mathbb{S}^{+}$ is $2\pi$, that is
	impossible. If there are five $Y$ point in $\mathbb{S}^{+}$, similar to
	above case, these five point form a spherical pentagon. By the same
	techniques used for the above case, we can prove that this is impossible. 
\end{proof}
\begin{lemma}\label{le:PPCV}
	Let $\Omega, L_1$ be as in \eqref{simple}. Let $E\subset \Omega$ be a 
	sliding minimal cone, $K=E\cap\partial B(0,1)$. If there
	exists a point $x_0\in K\cap L_1$ such that a blow-up limit of
	$K$ at $x_0$ is a sliding minimal cone $V_{\alpha,0}$ for some $\alpha\in
(0,\frac{\pi}{6}]$, then $E$ is a cone of type $\mathbb{V}$.
\end{lemma}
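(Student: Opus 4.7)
My plan opens with the local description of $K$ at $x_0$. By Lemma~\ref{lemma:boundaryofcone}, since a blow-up of $K$ at $x_0$ is $V_{\alpha,0}$ with $\alpha\in(0,\pi/6]$, there is a radius $r_0>0$ for which $K\cap B(x_0,r_0)$ is the union of two arcs $C_1,C_2$ of great circles meeting at $x_0$ with interior angle $\pi-2\alpha$, symmetric about the great circle through $\pm x_0$ that is perpendicular to $L_1\cap\partial B(0,1)$. Denote the containing great circles by $\Gamma_1,\Gamma_2$; both pass through $-x_0$, and their halves $\Gamma_i^+$ inside $\mathbb{S}^+:=\Omega\cap\partial B(0,1)$ connect $x_0$ to $-x_0$. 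My goal is to show $K=\Gamma_1^+\cup\Gamma_2^+$, because the cone from $0$ over this union is precisely a cone of type $\mathbb{V}$ with spine the line $D$ through $0$ and $x_0$.

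The argument then proceeds via density. From the product structure derived in the proof of Lemma~\ref{le:BPC}, a blow-up of $E$ at $x_0$ is $D\times V_{\alpha,0}$, itself a sliding minimal cone of type $\mathbb{V}$ with density $1$ at the origin; in particular $\theta_E(x_0)=1$. Since $E$ is minimal (gauge $h\equiv 0$), the map $r\mapsto\theta_E(x_0,r)$ is non-decreasing, with limits $\theta_E(x_0)=1$ as $r\to 0^+$ and $\theta_E(0)$ as $r\to\infty$ (the blow-down at $x_0$ returns the cone $E$, because $E$ is already a cone from $0$). The rigidity case of monotonicity then says that the equality $\theta_E(0)=1$ would force $\theta_E(x_0,r)\equiv 1$, making $E$ a cone from $x_0$ as well. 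Being a cone from both $0$ and $x_0$ implies that $E$ is invariant under translations along $D$, so $E=D\times E_\perp$ with $E_\perp=E\cap\Omega_{x_0}$ a $1$-dimensional sliding minimal cone centered at $0$ in $\Omega_{x_0}$; by Lemma~\ref{listofonedimensionalminimalcone} together with the blow-up identification, $E_\perp=V_{\alpha,0}$, so $E=D\times V_{\alpha,0}$ is of type $\mathbb{V}$. Everything therefore reduces to proving $\H^1(K)=2\pi$, i.e.\ $K=\Gamma_1^+\cup\Gamma_2^+$.

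For this remaining step I would run a Gauss-Bonnet analysis on the connected components of $\mathbb{S}^+\setminus K$, in the spirit of Lemma~\ref{le:PPC}. Proposition~\ref{prop:onsphere} guarantees that $C_1,C_2$ extend along $\Gamma_1,\Gamma_2$ as geodesic arcs until they meet either a $Y$-point in $\mathbb{S}^+$ or another $L_1$-point, and the only admissible $L_1$-point on a great circle through $x_0$ is $-x_0$. At each corner one reads off the exterior angle from Proposition~\ref{prop:onsphere} and Lemma~\ref{lemma:boundaryofcone}; the new ingredient is the $V_{\beta,0}$-type corner on $L_1$, which separates three regions of interior angles $\pi-2\beta,\beta,\beta$ and contributes exterior angles summing to $2\pi$, versus the $\pi$-contribution from a $Y$-point or a perpendicular $L_1$-corner. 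Summing Gauss-Bonnet against $\sum_A{\rm Area}(\overline{A})=2\pi$ should pin down the minimal configuration: no $Y$-point along $C_1,C_2$, no extra components, and a single further $V$-type corner at $-x_0$. The main obstacle is completing this combinatorial classification rigorously, since the new $V$-type corner makes the case analysis substantially richer than in Lemma~\ref{le:PPC}; one must exclude, for example, hypothetical additional $V$-type points elsewhere on $L_1$ and $Y$-points along the continuations of $C_1,C_2$, via Gauss-Bonnet together with Lemma~\ref{listofonedimensionalminimalcone} applied to blow-ups at every conjectured corner on $L_1$.
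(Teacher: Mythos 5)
Your opening paragraph matches the paper's starting point (Lemma \ref{lemma:boundaryofcone} gives the two arcs $C_1,C_2$ at $x_0$), but the rest of the proposal does not close. The density detour in your second paragraph is logically sound yet superfluous: if you actually establish $K=\Gamma_1^+\cup\Gamma_2^+$, the cone over $K$ is already the union of two half planes bounded by $D$ meeting at angle $\pi-2\alpha$, i.e.\ a cone of type $\mathbb{V}$, so no monotonicity, rigidity, or translation-invariance argument is needed. That means the entire content of the lemma sits in your third paragraph, and that is precisely the step you leave open --- you propose a global Gauss--Bonnet bookkeeping over all components of $\mathbb{S}^{+}\setminus K$ and then concede that ``the main obstacle is completing this combinatorial classification rigorously.'' An acknowledged obstacle at the only load-bearing step is a genuine gap, not a sketch of a proof.

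The idea you are missing, and which makes the paper's argument a few lines rather than a rich case analysis, is to work with a \emph{single} component and exploit convexity. Take the component $A$ of $\mathbb{S}^{+}\setminus K$ whose closure has the corner at $x_0$ of interior angle $\alpha$ (one side along $L_1\cap\partial B(0,1)$, the other along $C_1$). The convexity argument from Lemma \ref{lemma:boundaryofcone} (every interior angle of $\partial\overline{A}$ is at most $\pi$) shows $\overline{A}$ is convex, hence contained in the spherical lune of angle $\alpha$ spanned by the two sides at $x_0$, so ${\rm Area}(\overline{A})\leq 2\alpha\leq \pi/3$. At the same time the exterior angle at $x_0$ is $\pi-\alpha\geq 5\pi/6$, each interior $Y$-corner contributes $\pi/3$, and the second $L_1$-corner contributes either $\pi/2$ or at least $5\pi/6$. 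Plugging these two-sided bounds into Gauss--Bonnet for $\overline{A}$ alone forces $n=2$ with the second corner on $L_1$, so $\overline{A}$ is the whole lune; doing this for the components adjacent to $C_1$ and to $C_2$ identifies $K$ and hence $E$. Without the lune containment and the resulting area bound, your global summation has no mechanism to rule out extra $Y$-points on the continuations of $C_1,C_2$ or extra $V$-type corners on $L_1$, which is exactly the difficulty you flag but do not resolve.
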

\begin{proof}
	By Lemma \ref{lemma:boundaryofcone}, there exists a radius $r>0$ such that 
	$S\cap B(x_0,r)$ is a union of two arcs. That is, $E\cap
	B(x_0,r)=Z\cap B(x_0,r)$ where $Z$ is a cone of type $\mathbb{V}$. Without
	loss of generality, we assume $x_0=(1,0,0)$.

	We denote $\mathbb{S}^{+}=\Omega\cap \partial B(0,1) \setminus L_1$.
	Let $A$ be a connected component of $\mathbb{S}^{+}\setminus K$ which contains
	a corner with interior angle $\alpha$. Using the Gauss-Bonnet Theorem, see
	\cite[Theorem \uppercase\expandafter{\romannumeral5}.2.7]{Chavel:2006},
	we get that 
	\begin{equation}\label{eq:angles2}
		\alpha_1+\alpha_2+\cdots+\alpha_n+{\rm Area}(\overline{A})=2\pi,
	\end{equation}
	where $\alpha_1,\alpha_2,\cdots,\alpha_n$ are the exterior angle of the
	corners of $\partial \overline{A}$. We know that there are two corners which 
	touch the boundary $L_1$, assume that $\alpha_1=\pi-\alpha$ and $\alpha_2$ 
	are the corresponding exterior angles. It is quite easy to see that $A$ is
	contained in a spherical lune enclosed by two great circles with angle $\alpha$,
	thus ${\rm Area}(\overline{A})\leq 2\alpha$.

	If $\alpha_2=\frac{\pi}{2}$, then ${\rm Area}(\overline{A})<2\alpha$,
	\[
	(\pi-\alpha)+\frac{\pi}{2}+\frac{\pi}{3}\times (n-2)+{\rm
	Area}(\overline{A})=2\pi,
	\]
	thus
	\[
	\frac{\pi}{2}-\alpha< \frac{n-2}{3}\pi<\frac{\pi}{2}+\alpha.
	\]
	Since $\alpha\in (0,\frac{\pi}{6}]$, we get that $3< n<4$, that is impposible.

	If $\alpha_2\neq \frac{\pi}{2}$, then $\alpha_2\geq \frac{5\pi}{6}$, thus 
	\[
	2\pi=(\pi-\alpha)+\alpha_2+\frac{(n-2)\pi}{3}+{\rm
	Area}(\overline{A})> (\pi-\alpha)+\frac{5\pi}{6}+\frac{(n-2)\pi}{3}.
	\]
	Since $\alpha\leq \frac{\pi}{6}$, we get that $2\leq n< 3$, hence $n=2$. In
	this case, $A$ must be a spherical lune enclosed by two great circles with angle
	$\alpha$, and $E=\mathbb{R}\times V_{\alpha,0}$.
\end{proof}
\begin{theorem}\label{thm:MCCB}
	Let $\Omega$, $L_{1}$ be as in \eqref{simple}. Let $E\subset \Omega$ be a 
	sliding minimal cone. If $L_1\subset E$ and $E\setminus L_1\neq \emptyset$. 
	Then $E=Z\cup L_1$, $Z$ is a cone of type $\mathbb{P}_{+}$ or $\mathbb{Y}_{+}$. 
\end{theorem}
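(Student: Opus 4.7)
The plan is to combine the three preceding structural lemmas with the hypothesis $L_1 \subset E$. Set $K = E \cap \partial B(0,1)$ and $S = K \setminus L_1$. Since $E$ is a cone with $E \setminus L_1 \neq \emptyset$, the set $S$ is nonempty, so it makes sense to ask what $K$ looks like near $\overline{S} \cap L_1$.

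First I rule out blow-up limits of $\mathbb{V}$ type at the sliding boundary. If some blow-up limit of $K$ at a point $x_0 \in \overline{S} \cap L_1$ were a cone $V_{\alpha,0}$ with $\alpha \in (0, \pi/6]$, then Lemma \ref{le:PPCV} would force $E$ itself to be a cone of type $\mathbb{V}$, i.e.\ $E = \mathscr{R}(\mathbb{R} \times V_{\alpha,0})$ for some rotation $\mathscr{R}$ sending $L_1$ into itself. But any such set meets $L_1$ only along a single line through the origin, contradicting $L_1 \subset E$; hence no blow-up limit of $K$ at any point of $\overline{S} \cap L_1$ is of type $V_{\alpha,0}$. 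By the dichotomy in Lemma \ref{lemma:boundaryofcone}, the only remaining possibility is that for every $x \in \overline{S} \cap L_1$ there exists $r > 0$ such that $S \cap B(x,r)$ is an arc of a great circle perpendicular to $L_1$. This is exactly the hypothesis of Lemma \ref{le:PPC}.

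Applying Lemma \ref{le:PPC} then gives that either $E = Z$, with $Z$ of type $\mathbb{P}_+$, $\mathbb{Y}_+$, or $\mathbb{T}_+$, or else $E = Z' \cup L_1$ with $Z'$ of type $\mathbb{P}_+$ or $\mathbb{Y}_+$. The first alternative is incompatible with $L_1 \subset E$, because cones of type $\mathbb{P}_+$, $\mathbb{Y}_+$, and $\mathbb{T}_+$ each intersect $L_1$ in a $1$-dimensional set (a line, a three-ray tripod, and a four-ray star, respectively), so none of them contains the plane $L_1$. Only the second alternative survives, which is the desired conclusion. The main obstacle has already been absorbed into the preceding Gauss--Bonnet case analysis of Lemma \ref{le:PPC} and the blow-up classifications of Lemmas \ref{lemma:boundaryofcone} and \ref{le:PPCV}; with those in place, the present theorem is a short corollary obtained by invoking $L_1 \subset E$ twice, first to exclude type $\mathbb{V}$ at blow-up and then to exclude the ``$E = Z$ without $L_1$'' branch.
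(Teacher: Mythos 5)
Your proposal is correct and follows essentially the same route as the paper: exclude $V_{\alpha,0}$ blow-ups at points of $\overline{S}\cap L_1$ via Lemma \ref{le:PPCV} (the paper also reduces to the type-$\mathbb{V}$ contradiction with $L_1\subset E$, though it leaves the reason implicit), then invoke the dichotomy of Lemma \ref{lemma:boundaryofcone} to land in the hypothesis of Lemma \ref{le:PPC}, and finally use $L_1\subset E$ to discard the branches not of the form $Z'\cup L_1$ with $Z'$ of type $\mathbb{P}_{+}$ or $\mathbb{Y}_{+}$. The only cosmetic difference is that the paper re-cites the non-minimality of $L_1\cup\mathbb{T}_{+}$ at this point, whereas you rely on that exclusion already being built into the conclusion of Lemma \ref{le:PPC}; both are legitimate readings.
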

\begin{proof}
	The result immediately follows from Lemma \ref{lemma:boundaryofcone}, Lemma
	\ref{le:PPC} and Lemma \ref{le:PPCV}. Indeed, by putting $K=E\cap \partial
	B(0,1)$ and $S=K\setminus L_1$, Lemma \ref{le:BPC} says that any blow-up
	limit of $K$ at a point $x\in K\cap L_1$ is a one dimensional sliding 
	minimal cone. By Lemma \ref{listofonedimensionalminimalcone}, there exist
	only there possible cases for such a minimal cone. That is, a half line
	$P_0$ perpendicular to $L_1$, or $P_0$ union the line which pass though $x$
	perpendicular the segment $[0,x]$ and is contained in $L_1$, or a cone
	$V_{\alpha,0}$. If it is a cone $V_{\alpha,0}$, then by Lemma \ref{le:PPCV},
	$E=\mathbb{R}\times V_{\alpha,0}$, which is impossible. For any
	$x\in \overline{S}\cap L_1$, by Lemma \ref{lemma:boundaryofcone}, there 
	exists a radius $r>0$ such that $S\cap B(x,r)$ is an arc of a great cicle 
	which is perpendicular to $L_1$, and by Lemma \ref{le:PPC}, $E=Z\cup L_1$,
	where $Z$ is a cone of type of one of $\mathbb{P}_{+}$,
	$\mathbb{Y}_{+}$ and $\mathbb{T}_{+}$, but for the last case,
	it is impossible, because we know that $Z\cup L_1$ is not minimal when
	$Z$ is of type $\mathbb{T}_{+}$. We get that $E=Z\cup L_1$, where
	$Z$ is a cone of type $\mathbb{P}_{+}$ or $\mathbb{Y}_{+}$.
\end{proof}
\begin{remark}\label{re:listofminimalcones}
	We claim that the list of sliding minimal cones is the following: cones of 
	type $\mathbb{P}_{+}$, cones of type $\mathbb{Y}_{+}$, the plane $L_1$, 
	cones like $\mathscr{R}(\mathbb{R}\times V_{\alpha,0})$ with $0<\alpha\leq \frac{\pi}{6}$, 
	cones $L_1\cup Z$ where $Z$ are cones of type $\mathbb{P}_{+}$ or
	$\mathbb{Y}_{+}$, and cones type $\mathbb{T}_{+}$. 
\end{remark}
We did not prove that a cone of type $\mathbb{T}_{+}$ is sliding minimal.
Indeed it can probably be proved by using calibration, but this may take us 
too much time, we do not want to do it here. It is also not too hard to check
that a cone like $\mathbb{R}\times V_{\alpha,0}$ is sliding minimal if and only 
if $0<\alpha\leq \frac{\pi}{6}$. One of possible ways to do this is to sue 
almost the same technique as in Lemma \ref{le:PLMC}, but again we omit it. In 
fact, we do not need know whether or not a cone of type $\mathbb{T}_{+}$ or 
like $\mathbb{R}\times V_{\alpha,0}$ is minimal in this paper. For the rest in
the list, we know from Lemma \ref{lemma:boundaryofcone}, Lemma \ref{le:PPC} and 
Lemma \ref{le:PPCV} that they are sliding minimal.

\section{Reifenberg's theorem}
We want to use a result, Theorem 2.2 in \cite{DDT:2008}. But here 
we are in the half space, the theorem can not be used directly, it 
should be adapted a little bit. 

Let $n$ and $d$ be two integers with $2\leq d<n$. We take 
\begin{equation}\label{eq:ndimhalf}
	\begin{gathered}
		\Omega_n=\{ (x_1,x_2,\cdots,x_n)\in\mathbb{R}^{n}\mid x_n\geq 0 \},\\
		L_1=\{ (x_1,x_2,\cdots,x_n)\in\mathbb{R}^{n}\mid x_n=0 \}.\\
	\end{gathered}
\end{equation}
We let $\sigma$ be the reflection with respect to $L_1$. That is, the function
$\mathbb{R}^{n}\to\mathbb{R}^{n}$ defined by
\[
	\sigma(x_1,\cdots,x_{n-1},x_n)=(x_1,\cdots,x_{n-1},-x_n).
\]

Let $TG$ be a class of sets defined as in \cite[p.6]{DDT:2008}, which consists
of 3 kinds of cones (centered at any point in $\mathbb{R}^{n}$) of dimension 
$d$ in $\mathbb{R}^{n}$. In particular, if $n=3$, $d=2$,
$TG$ consists of planes, cones which are the union of three half planes bounded 
by a line while the angle between any two half planes is larger than a constant
$\tau_0>0$ (they look like cones of type $\mathbb{Y}$), and cones which are
union of several faces that meet only by sets of three and with angles between
two adjacent faces and angles between the spines larger than a constant 
$\tau_0>0$ (cones of type $\mathbb{T}$ and cones look like of type $\mathbb{T}$ 
are such cones; cone $Z\cup \sigma(Z)$ is also a such cone, where $Z$ is a cone 
of type $\mathbb{Y}_{+}$ or $\mathbb{T}_{+}$. Of course, far more than these cones).

For any $x\in\mathbb{R}^{n}$, $r>0$, we will consider $d_{x,r}$ a variant of
the Hausdorff distance on closed sets, which is defined by 
\[
d_{x,r}(E,F)=\frac{1}{r}\max\left\{ \sup_{z\in E\cap
B(x,r)}\dist(z,F),\sup_{z\in F\cap B(x,r)}\dist(z,E) \right\}.
\]
If $E$ and $F$ are two cones centered at $x$, then $d_{x,r}(E,F)=d_{x,1}(E,F)$
for any $r>0$.
\begin{theorem}
	Let $E\subset\mathbb{R}^{n}$ be a compact set that contains origin with
	$\sigma(E)=E$, and suppose that for each $x\in E\cap B(0,2)$ and each ball 
	$B(x,r)\subset B(0,2)$, we can find $Z(x,r)\in TG$ that contains $x$, such that
	\[
	d_{x,r}(E,Z(x,r))\leq \varepsilon.
	\]
	Suppose, additionally, that $Z(\sigma(x),r)=\sigma(Z(x,r))$. 
	If $\varepsilon>0$ is small enough, depending only on $n$, $d$ and $\tau_0$,
	then there exist a cone $Z\in TG$ centered at origin and a mapping 
	$f:B(0,3/2)\to B(0,2)$ with the following properties:
	\begin{equation}\label{eq:PT1}
		\begin{gathered}
			\sigma(Z)=Z,\ f\circ\sigma=\sigma\circ f,\ \| f-\id \|_{\infty}\leq \alpha,\\
			(1+\alpha)^{-1}\left\vert x-y \right\vert^{1+\alpha}\leq \left\vert
			f(x)-f(y) \right\vert\leq (1+\alpha)\left\vert x-y
			\right\vert^{\frac{1}{1+\alpha}},\\
			B(0,1)\subset f\left( B\left( 0,\frac{3}{2} \right) \right)\subset
			B(0,2),\\
			E\cap B(0,1)\subset f\left(Z\cap B\left( 0,\frac{3}{2}
			\right)\right)\subset E\cap B(0,2),
		\end{gathered}
	\end{equation}
	where $\alpha$ only depends on only $n$, $d$, $\tau_0$ and $\varepsilon$,
	and $\tau_0$ is defined as in \cite[(2.7) and (2,8)]{DDT:2008}.
\end{theorem}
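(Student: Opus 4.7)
The plan is to reduce the statement to Theorem 2.2 of \cite{DDT:2008} and then upgrade its conclusion so that it commutes with the reflection $\sigma$. The unsymmetric part---existence of a cone $Z\in TG$ centered at $0$ and a bi-Hölder map $f: B(0,3/2)\to B(0,2)$ satisfying the size, approximation and inclusion estimates in \eqref{eq:PT1}---is exactly what the DDT theorem delivers when fed the data $\{Z(x,r)\}$. The only extra content to produce is the pair of symmetry identities $\sigma(Z)=Z$ and $f\circ\sigma=\sigma\circ f$.

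To obtain these, I would re-open the construction of \cite{DDT:2008}, which proceeds by choosing at each dyadic scale $r_k$ a maximal $r_k$-separated net $\{x_{k,j}\}\subset E$, assigning to each net point an approximating cone $Z_{k,j}\in TG$ drawn from the given family, and producing $f$ as an infinite composition of local affine interpolations glued through a smooth partition of unity subordinate to balls $B(x_{k,j}, C' r_k)$. Because $\sigma(E)=E$, a greedy pair-by-pair selection yields nets that split into fixed points lying in $L_1$ and genuine pairs $\{x_{k,j},\sigma(x_{k,j})\}$, with the separation and maximality properties preserved. The hypothesis $Z(\sigma(x),r)=\sigma(Z(x,r))$ then forces the family of local approximating cones to be $\sigma$-invariant as a whole; at net points on $L_1$ the chosen cone is itself $\sigma$-invariant. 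A $\sigma$-symmetric partition of unity is produced by averaging $\theta_{k,j}$ with $\theta_{k,j'}\circ\sigma$, where $x_{k,j'}=\sigma(x_{k,j})$.

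Under these choices the local affine gluing maps $A_{k,j}$ of \cite{DDT:2008} satisfy $\sigma\circ A_{k,j}\circ\sigma = A_{k,j'}$, so every finite composition $f_N$ in the DDT iteration obeys $f_N\circ\sigma = \sigma\circ f_N$. The $\sigma$-equivariance passes to the uniform limit $f=\lim_{N\to\infty}f_N$, and the limit cone $Z$---obtained as a pointed Hausdorff limit at the origin of data that is $\sigma$-symmetric by construction---inherits $\sigma(Z)=Z$. All the bi-Hölder, proximity and inclusion estimates in \eqref{eq:PT1} are copied from \cite{DDT:2008} without change, since the symmetrization does not degrade any of the quantitative bounds driving them.

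The main obstacle is the equivariant bookkeeping in the middle step: one must verify that the net, the approximating-cone assignment and the partition of unity can be made simultaneously $\sigma$-equivariant while retaining the separation, overlap-multiplicity and Lipschitz controls needed for the DDT estimates. A subtle point is that when $x_{k,j}\in L_1$, the hypothesis forces $\sigma(Z(x_{k,j},r))=Z(x_{k,j},r)$, so the chosen cone must itself be reflection-invariant; this should cause no trouble because the sets in the class $TG$ (in the dimensions considered) admit such symmetric members. Once this bookkeeping is achieved, the remainder of the argument is a verbatim repetition of the proof of \cite[Theorem 2.2]{DDT:2008}.
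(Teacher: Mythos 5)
Your proposal is correct and follows essentially the same route as the paper: both re-open the covering/partition-of-unity construction of \cite{DDT:2008}, replace the nets at each scale by $\sigma$-symmetric ones (the paper does this by picking a maximal net in the closed half-space $\Omega_n$ and adjoining its reflection, which is one concrete realization of your ``fixed points plus pairs'' selection), use the hypothesis $Z(\sigma(x),r)=\sigma(Z(x,r))$ for the cones, take a $\sigma$-symmetric partition of unity, and then propagate the identity $\sigma\circ f_N=f_N\circ\sigma$ through the iteration to the limit map and the limit cone. The quantitative conclusions are, as you say, inherited verbatim from \cite{DDT:2008}.
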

\begin{proof}
	The proof is essential the same as in \cite{DDT:2008}, we only need do a
	little change. Here we use same notation as in \cite{DDT:2008}. We firstly
	remark that $\sigma(E_i)=E_i$, $i=1,2$ or $3$, where $E_i$ are defined
	as in \cite[p.11, p.12]{DDT:2008}. 

	Next, we modify a little the construction of a good covering of $E$ at
	scale $2^{-n}$, that is in Section 5 in 
	\cite[Covering and partition of unity]{DDT:2008}. The first step is just same the
	as the original construction; if the condition (4.36) in \cite{DDT:2008} holds,
	we cover $E_3\cap B(0,199/100)=\{ 0 \}$ with the ball $B_{i_0}=B(0,2^{-n-20})$, 
	and set $I_3=\{ i_0 \}$; if the condition (4.35) in \cite{DDT:2008} holds, 
	we take $I_{3}=\emptyset$ and choose no ball. In the second step, for the 
	construction of a covering of 
	\[
	E_{2}'=E_2\cap B(0,198/100)\setminus \frac{7}{4}B_{i_0},
	\]
	we modify a little the original construction to adapt to our case. We put
	$E_2''=E_2'\cap \Omega_n$, then $E_2'=E_2''\cap \sigma(E_2'')$. Select a
	maximal subset $X_2''$ of $E_2''$, with the property that different
	points of $X_2''$ have distances at least $2^{-n-40}$. We put $X_2=X_2''\cup
	\sigma(X_2'')$, and for accounting reasons, we suppose that 
	$X_2''=\{ x_i \}_{i\in I_2''}$, $I_2''\cap I_3=\emptyset$, and that
	$X_2'=\sigma(X_2'')=\{ x_i \}_{i\in I_2'}$, $I_2'\cap (I_2''\cup
	I_3)=\emptyset$. Let $I_2=I_2'\cup I_2''$, $X_2=X_2'\cup X_2''$. We put
	$r_i=2^{-n-40}$ and $B_i=B(x_i,r_i)$ for $i\in I_2$. We can see that the 
	balls $\overline{B_i}$, $i\in I_2$, cover $E_2'$. In the third step, we put 
	\[
	V_2=\frac{15}{8}B_{i_0}\cup \bigcup_{i\in I_2}\frac{7}{4}B_i \text{ and }
	E'=E_1\cap B\left( 0,\frac{197}{100} \right)\setminus V_2.
	\]
	Similarly to the above step, put $E_1''=E_1'\cap \Omega_n$, and select a
	maximal subset $X_1''$ of $E_1''$, with the property that different points
	of $X_1''$ have distances at least $2^{-n-60}$, and then suppose that 
	$X_1''=\{ x_i \}_{i\in I_1''}$ with $I_1\cap (I_2\cup I_3)=\emptyset$, and 
	that $X_1'=\sigma(X_1'')=\{ x_i \}_{i\in I_1'}$ with $I_1'\cap (I_1''\cup
	I_2\cup I_3)$. Set $I_1=I_1'\cup I_1''$,
	and $B_i=B(x_i,2^{-n-60})$ for $i\in I_1$. It is
	very easy to see that the balls $\overline{B_i}$, $i\in I_1$, cover
	$E_{1}'$. For the fourth and last step of the construction of the
	covering, we put 
	\[
	V_1=\frac{31}{16}B_{i_0}\cup \bigcup_{i\in I_2}\frac{15}{8}B_i\cup
	\bigcup_{i\in I_1}\frac{7}{4}B_i \text{ and } E_0'=\mathbb{R}^{3}\setminus
	V_1.
	\]
	We put $E_0''=E_0'\cap \Omega_n$, and pick a maximal subset $X_0''$ of
	$E_0''$, such that different points of $X_0''$ have distance at least
	$2^{-n-80}$, and then suppose that $X_0''=\{ x_i \}_{i\in I_0''}$ with 
	$I_0''\cap (I_1\cup I_2\cup I_3)=\emptyset$, and that $X_0'=\{ x_i \}_{i\in
	I_0'}$ with $I_0'\cap (I_0''\cup I_1\cup I_2\cup I_3)=\emptyset$.
	Set $I_0=I_0'\cup I_0''$, and $B_{i}=B(x_i,2^{-n-80})$ for $i\in I_0$, then
	the balls $\overline{B_i}$, $i\in I_0$, cover $E_0'$.

	For the selection of a partition of unity in equation (5.10) in
	\cite{DDT:2008}, we choose the $\widetilde{\theta}_i$ as the translation and
	dilation of a same model $\theta$, where $\theta$ is a smooth function such
	that $\theta(x)=1$ in $B(0,2)$, $\theta(x)=0$ out of $B(0,3)$, $0\leq
	\theta(x)\leq 1$ everywhere, and $\sigma\circ\theta=\theta\circ\sigma$. The
	rest of proof will be the same as in \cite{DDT:2008}.

	We now verify that $\sigma\circ f^{\ast}=f^{\ast}\circ\sigma$. It is clear 
	that $\sigma\circ f_0^{\ast}=f_0^{\ast}\circ \sigma$, 
	$\sigma\circ f_0=f_0\circ\sigma$,
	$\sigma\circ\psi_{i_0}^{\ast}=\psi_{i_0}^{\ast}\circ\sigma$, and
	$\sigma\circ\psi_{i_0}=\psi_{i_0}\circ\sigma$. By our construction of $X_0$,
	$X_1$ and $X_2$, we can see that 
	\begin{align*}
		g_n^{\ast}(x)&=\sum_{i\in I_n}\theta_i(x)\psi_i^{\ast}(x)\\
		&=\theta_{i_0}(x)\psi_{i_0}^{\ast}(x)+
		\sum_{i\in I_0'\cup I_1'\cup I_2'}\Big(\theta_i(x)\psi_i^{\ast}(x)+
		\theta_i(\sigma(x))\psi_i^{\ast}(\sigma(x))\Big),\\
	\end{align*}
	thus $\sigma\circ g_n^{\ast}=g_n^{\ast}\circ\sigma$. By induction on $n$, we
	can get that $\sigma\circ f_n^{\ast}=f_n^{\ast}\circ\sigma$ for all $n\geq 0$.
	$f^{\ast}$ is the limit of the sequence $f_n^{\ast}$, thus $\sigma\circ
	f^{\ast}=f^{\ast}\circ \sigma$. 
	
	Finally, by the same proof as above, we can prove that
	$\sigma\circ f= f\circ \sigma$.
\end{proof}

\begin{corollary}\label{co:PT}
	For each small $\tau>0$, we can find $\varepsilon>0$, that depends only on
	$n$, $\tau$ and $\tau_0$ such that if $E\subset \Omega$ is a closed set,
	$0\in E$ and $r>0$ are such that for $y\in E\cap B(0,3r)$ and $0<t\leq 3r$, we
	can find $Z(y,t)$, which is a minimal cone in $\mathbb{R}^{3}$ 
	when $0<t<\dist(y, L_1)$ and a sliding minimal cone in $\Omega$ with boundary 
	$L_1$ when $\dist(y,L_1)\leq t\leq 3r$, such that 
	\[
	d_{y,t}(E, Z(y,t))\leq \varepsilon,
	\]
	and in addition $Z(0,3r)$ is sliding minimal cone centered at $0$, then there
	is a sliding minimal cone centered at origin and a mapping 
	$f:B(0,3r/2)\cap \Omega\to B(0,2r)\cap\Omega$ with the following properties:
	\begin{equation}\label{eq:PT2}
		\begin{gathered}
			f(x)\in L_1\text{ for }x\in L_1\text{ and } \| f-\id \|_{\infty}\leq
			\tau,\text{ and}\\
			(1+\tau)^{-1}\left\vert x-y \right\vert^{1+\tau}\leq \left\vert
			f(x)-f(y) \right\vert\leq (1+\tau)\left\vert x-y
			\right\vert^{\frac{1}{1+\tau}},\\
			B(0,r)\cap\Omega\subset f\left( B\left( 0,\frac{3r}{2} \right)\cap
			\Omega \right)\subset B(0,2r)\cap \Omega,\\
			E\cap B(0,r)\subset f\left(Z\cap B\left( 0,\frac{3r}{2}
			\right)\right)\subset E\cap B(0,2r).
		\end{gathered}
	\end{equation}

\end{corollary}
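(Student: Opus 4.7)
The plan is to deduce this corollary from the preceding adapted Reifenberg theorem via the reflection trick. After the harmless rescaling $x \mapsto x/r$ we may assume $r=1$, so the hypothesis supplies approximating cones on $B(0,3)$ and the theorem will produce $\tilde{f}$ on $B(0,3/2)$. Set $\widetilde{E} = E \cup \sigma(E)$, a closed $\sigma$-symmetric set containing origin, and define approximating cones $\widetilde{Z}(y,t)$ on $\widetilde{E}$ as follows: for $y\in E$ with $t<\dist(y,L_1)$, take $\widetilde{Z}(y,t)=Z(y,t)$ (a classical minimal cone in $\mathbb{R}^3$); for $y\in E$ with $t\geq\dist(y,L_1)$, take $\widetilde{Z}(y,t)=Z(y,t)\cup\sigma(Z(y,t))$; for $y\in\sigma(E)\setminus E$, propagate by symmetry via $\widetilde{Z}(y,t)=\sigma(\widetilde{Z}(\sigma(y),t))$. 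Each $\widetilde{Z}(y,t)$ is then $\sigma$-symmetric and the compatibility $\widetilde{Z}(\sigma(y),t)=\sigma(\widetilde{Z}(y,t))$ required by the theorem holds automatically.

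The approximation bound $d_{y,t}(\widetilde{E},\widetilde{Z}(y,t))\leq\varepsilon$ is immediate in the first case (the ball lies entirely in $\Omega$) and in the third (by $\sigma$-symmetry of the second). For the second case I would use the elementary identity $|\sigma(z)-y|^2=|z-y|^2+4z_ny_n$, which for $y\in\Omega$ and $z\in\sigma(\Omega)$ forces $|\sigma(z)-y|\leq|z-y|$. Hence any $z\in\sigma(E)\cap B(y,t)$ has $\sigma(z)\in E\cap B(y,t)$, and since $\sigma$ is an isometry $\dist(z,\sigma(Z(y,t)))=\dist(\sigma(z),Z(y,t))\leq\varepsilon t$; the analogous argument for $z\in\sigma(Z(y,t))\cap B(y,t)$ handles the reverse direction.

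Third, I would verify that each $\widetilde{Z}(y,t)\in TG$. Away from $L_1$ this is immediate. For $y$ within $t$ of $L_1$, I go through the classification in Remark \ref{re:listofminimalcones}: $\mathbb{P}_+$ reflects to a plane, $\mathbb{Y}_+$ to a $\mathbb{Y}$-cone, $L_1$ is $\sigma$-fixed, and $L_1\cup\mathbb{P}_+$ (resp.\ $L_1\cup\mathbb{Y}_+$) reflects to $L_1$ union a full plane (resp.\ $L_1$ union a $\mathbb{Y}$-cone), all lying in $TG$ by the flexibility indicated after its definition. With all hypotheses of the preceding theorem in hand, applying it to $\widetilde{E}$ yields a $\sigma$-symmetric cone $\widetilde{Z}\in TG$ centered at origin and a biH\"older map $\tilde{f}\colon B(0,3/2)\to B(0,2)$ with $\tilde{f}\circ\sigma=\sigma\circ\tilde{f}$. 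The symmetry forces $\tilde{f}(L_1)\subset L_1$ and, combined with the closeness of $\tilde{f}$ to the identity, $\tilde{f}(\Omega)\subset\Omega$. Setting $f=\tilde{f}|_{B(0,3/2)\cap\Omega}$ and $Z=\widetilde{Z}\cap\Omega$ produces the parametrization of the corollary (the biH\"older bounds and nested inclusions transferring directly from those of $\tilde{f}$ and $\widetilde{Z}$), and rescaling by $r$ restores the original scale.

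The main obstacle is the verification that the reflected sliding minimal cones really do belong to $TG$. In particular, the reflections of $L_1\cup\mathbb{P}_+$ and $L_1\cup\mathbb{Y}_+$ exhibit four-fold junctions along the rays where $L_1$ meets the perpendicular faces, rather than the three-fold junctions of standard $\mathbb{T}$-type cones. This requires interpreting $TG$ from \cite{DDT:2008} with enough flexibility to include such polyhedral configurations, or equivalently, relies on the fact that the preceding symmetric variant of Reifenberg's theorem is designed precisely to accommodate the sets $Z\cup\sigma(Z)$ that arise here.
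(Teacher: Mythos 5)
Your proposal is the intended argument: the paper states Corollary \ref{co:PT} with no proof at all, but the $\sigma$-symmetry hypotheses built into the preceding theorem exist precisely so that one can symmetrize $E$ and the approximating cones by reflection and then restrict back to $\Omega$, exactly as you do. Your distance estimates (via $|\sigma(z)-y|^2=|z-y|^2+4z_ny_n$), the consistency of the assignment $\widetilde{Z}(\sigma(y),t)=\sigma(\widetilde{Z}(y,t))$, and the deduction of $f(L_1)\subset L_1$ and $f(\Omega)\subset\Omega$ from $f\circ\sigma=\sigma\circ f$ together with injectivity and closeness to the identity are all correct.

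The obstacle you flag at the end is genuine, but it is a defect of the corollary as stated rather than of your argument. The class $TG$ of \cite{DDT:2008} only admits cones whose faces meet in threes, so the reflections of $L_1\cup\mathbb{P}_{+}$, of $L_1\cup\mathbb{Y}_{+}$, and of the $\mathbb{V}$-type cones in Remark \ref{re:listofminimalcones} --- all of which the hypothesis of Corollary \ref{co:PT} formally permits as $Z(y,t)$ --- produce four-fold junctions and do not belong to $TG$; the paper's own parenthetical claim that ``$Z\cup\sigma(Z)$ is also such a cone'' covers only $\mathbb{Y}_{+}$ and $\mathbb{T}_{+}$. The paper sidesteps this in the applications: in Lemmas \ref{le:PUFP2} and \ref{lemmay2}, when the blow-up is $L_1\cup Z'$ one never feeds $E$ itself to the Reifenberg machine but first passes to $F=\overline{E\setminus L_1}$, which is approximated by $Z'$ alone (type $\mathbb{P}_{+}$ or $\mathbb{Y}_{+}$), so that only planes and $\mathbb{Y}$-cones appear after reflection. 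Hence your proof is complete for the corollary restricted to cones whose reflections lie in $TG$ --- which is the only form in which the corollary is ever invoked --- while the statement as literally written would require enlarging $TG$, something neither you nor the paper does.
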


\section{Regularity of sliding almost minimal sets
\uppercase\expandafter{\romannumeral1}}

In this section, we restrict ourselves to the half space $\Omega$, and prove
some boundary regularity for sliding almost minimal sets. 

\begin{lemma}\label{mainlemma}
	Let $\Omega$ and $L_1$ be as in \eqref{simple}, $U$ an open set.
	Suppose that $E\subset\Omega$ is $(U,h)$-sliding-almost-minimal. 
	For each $\tau>0$, we can find $\varepsilon(\tau)>0$ such that if $x\in
	E\cap L_1$ and $r>0$ are such that 
	\begin{equation}\label{densityconditions}
		B(x,r)\subset U, h(2r)\leq \varepsilon(\tau),
		\int_{0}^{2r}\frac{h(t)}{t}dt\leq\varepsilon(\tau),
		\theta(x,r)\leq\theta(x)+\varepsilon(\tau),
	\end{equation}
	then for every $\rho\in (0, 9r/10]$ there is a sliding minimal cone
	$Z_x^{\rho}$ centered at $x$, such that 
	\begin{equation}\label{eq:main1}
			d_{x,\rho}(E,Z_x^{\rho})\leq\tau
		\end{equation}
	and for any ball $ B(y,t)\subset B(x,\rho)$,
		\begin{equation}\label{eq:main}
			\left\vert \H^2(E\cap B(y,t))-\H^2(Z_x^{\rho}\cap B(y,t))
			\right\vert\leq\tau\rho^{2}\\
	\end{equation}
	Moreover, if $L_1\subset E$, then we can suppose that $L_1\subset Z_x^\rho$. 
\end{lemma}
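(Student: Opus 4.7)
The natural strategy is a compactness-and-contradiction argument of the type used throughout \cite{David:2009, David:2014}. Assume the conclusion fails for some $\tau_0>0$: then for each $n$ we can find $(U_n, h_n)$-sliding-almost-minimal sets $E_n\subset\Omega$ satisfying the hypotheses with $\varepsilon=1/n$, together with points $x_n\in E_n\cap L_1$, radii $r_n>0$ and $\rho_n\in(0,9r_n/10]$, such that either no sliding minimal cone $Z$ centered at $x_n$ satisfies $d_{x_n,\rho_n}(E_n,Z)\le\tau_0$, or else for every such $Z$ some ball $B(y_n,t_n)\subset B(x_n,\rho_n)$ witnesses a measure gap larger than $\tau_0\rho_n^2$.

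First I would rescale and translate: set $F_n=\rho_n^{-1}(E_n-x_n)$. Each $F_n$ is sliding almost-minimal in a half-space (still $\Omega$, since $x_n\in L_1$) with gauge function $h_n^*(t)=h_n(\rho_n t)$; the hypotheses $h_n(2r_n)\le 1/n$ and $\int_0^{2r_n}h_n(t)t^{-1}dt\le 1/n$ guarantee that these rescaled gauges tend to $0$ uniformly on compact sets. After extracting a subsequence I would pass to a Hausdorff limit $F$ on compact subsets of $\Omega$; by the closure theorem for sliding almost-minimal sets (Theorem 24.13 in \cite{David:2014}), $F$ is sliding minimal in $\Omega$ with boundary $L_1$, and $0\in F\cap L_1$. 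If furthermore $L_1\subset E_n$ for each $n$, then $L_1\subset F_n$ for each $n$ and hence $L_1\subset F$, which gives the ``Moreover'' clause.

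Next I would show $F$ is a cone centered at $0$. This is where the density hypothesis $\theta(x_n,r_n)\le\theta_{E_n}(x_n)+1/n$ is used together with the almost monotonicity \eqref{eq:AMDS}. For any $0<a<b<9/10$, monotonicity gives $\theta_{E_n}(x_n)\le\theta_{E_n}(x_n,a\rho_n)e^{\lambda A_n(a\rho_n)}$ and $\theta_{E_n}(x_n,b\rho_n)e^{\lambda A_n(b\rho_n)}\le\theta_{E_n}(x_n,r_n)e^{\lambda A_n(r_n)}$ (up to harmless constants), and the smallness of $\int_0^{2r_n}h_n(t)t^{-1}dt$ forces $A_n\to 0$, so $\theta_{F_n}(0,b)-\theta_{F_n}(0,a)\to 0$. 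Combined with local convergence of Hausdorff measure for sliding almost-minimal sets (the second main tool, and the one I expect will be the most delicate; it follows from the lower semicontinuity provided by almost-minimality and the upper semicontinuity for general Hausdorff convergent sequences), this yields $\theta_F(0,a)=\theta_F(0,b)$ for all such $a,b$. Constancy of the density of the sliding minimal set $F$ on an interval of radii implies $F$ is a cone centered at $0$, as in \cite[Proposition 6.2]{David:2009}.

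Set $Z_n:=x_n+\rho_n F$, a sliding minimal cone centered at $x_n$. Hausdorff convergence $F_n\to F$ on $\overline{B(0,1)}$ gives $d_{x_n,\rho_n}(E_n,Z_n)\to 0$, contradicting option (a). For option (b), I would invoke uniform convergence of Hausdorff measure on balls: since both $F_n\to F$ and $F$ is minimal, for any ball $\overline{B(y,t)}\subset B(0,1)$ one has $\H^2(F_n\cap B(y,t))\to\H^2(F\cap B(y,t))$, uniformly in the choice of $(y,t)$ because $\H^2(F\cap\partial B(y,t))=0$ for a $2$-dimensional cone except on a controlled set of radii, and a Vitali-type argument upgrades this into uniform control on all balls; rescaling back this contradicts option (b) for large $n$. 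The main obstacle, then, is establishing the uniform local convergence of $\H^2$; once that is in place, the cone property of the limit and standard compactness close the argument.
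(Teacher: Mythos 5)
Your compactness-and-contradiction outline is essentially correct, but it takes a different route from the paper, which does not prove the lemma at all: the paper's entire ``proof'' is the observation that the statement follows directly from Proposition 30.19 in \cite{David:2014}, with the remark that the ``moreover'' clause ($L_1\subset Z_x^\rho$ when $L_1\subset E$) comes for free from that proposition's proof. What you have written is, in effect, a reconstruction of the argument behind that cited proposition: rescale by $\rho_n$, pass to a limit using the closure theorem for sliding almost-minimal sets, use the density pinching $\theta(x,s)\le\theta(x,s)e^{\lambda A(s)}\le(\theta(x)+\varepsilon)e^{\lambda\varepsilon}$ for all $s\le r$ together with the rigidity in the near-monotonicity formula to see the limit is a cone, and then contradict one of the two failure modes. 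Your sketch correctly identifies the two external inputs that do the real work --- the limiting theorem (Theorem 24.13 / Section 22 of \cite{David:2014}) and the local convergence of $\H^2$ along Hausdorff-convergent sequences of sliding almost-minimal sets --- and these are precisely what David packages into Proposition 30.19, so nothing is circular. The one place your write-up is thinnest is the uniformity of \eqref{eq:main} over \emph{all} balls $B(y,t)\subset B(x,\rho)$: pointwise convergence of $\H^2(F_n\cap B(y,t))$ for each fixed ball (using $\H^2(F\cap\partial B(y,t))=0$, which does hold since $F$ is a finite union of planar pieces) does not immediately give a bound uniform in $(y,t)$, and the ``Vitali-type upgrade'' you gesture at needs to be an honest equicontinuity argument in $(y,t)$ using the uniform Ahlfors regularity of the $E_n$; this is carried out in \cite{David:2014} and is worth either citing or writing out. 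In short: your approach buys a self-contained proof at the cost of reproving a known proposition; the paper buys brevity at the cost of deferring everything to \cite{David:2014}.
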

This lemma is directly following from Proposition 30.19 in \cite{David:2014}.
If $L_1\subset E$, by the original proof in \cite[Proposition 30.19]
{David:2014}, we can go further and assert that $L_1\subset Z^{\rho}$, the
proof will be same, we do not even need to do any extra effort.

If $E$ is a sliding almost minimal set, then for any $x\in E\cap L_1$, any
blow-up limit of $E$ at $x$ is a sliding minimal cone, see 
\cite[Theorem 24.13]{David:2014}. Moreover, the density of any blow-up limit 
at origin is aways the value $\theta_E(x)$, see Proposition 7.31 \cite{David:2009}
and Corollary 29.53 in \cite{David:2014}. By Remark \ref{re:listofminimalcones},
\[
\theta_E(x)\in \left\{ \frac{1}{2},\frac{3}{4}, 1,
d_{T_+},\frac{3}{2},\frac{7}{4} \right\},
\]
where we denote by $d_{T_+}$ the density of cones of type $\mathbb{T}_{+}$ at
origin. In fact, $d_{T_+}=3\arccos(-1/3)/\pi-3/4\approx 1.07452$.

If $\theta_E(x)=\frac{1}{2}$, then $\theta_{Z_x^{\rho}}(x)=\frac{1}{2}$, thus
$Z_x^{\rho}$ is a sliding minimal cone of type $\mathbb{P}_{+}$ in $\Omega$ 
with sliding boundary $L_1$.

Similarly, if $\theta_E(x)=\frac{3}{4}$, then 
$\theta_{Z_x^{\rho}}(x)=\frac{3}{4}$, thus $Z_x^{\rho}$ is a sliding minimal 
cone of type $\mathbb{Y}_{+}$ in $\Omega$ with sliding boundary $L_1$.

If $L_1\subset E$, then the blow-up limit is sliding minimal
cone containing $L_1$. We know, by Theorem \ref{thm:MCCB},
that there are only three kinds of minimal cone which contain $L_1$.
That is, $L_1$ or $Z\cup L_1$, here $Z$ is a minimal
cone of type $\mathbb{P}_{+}$ or $\mathbb{Y}_{+}$. Thus, in the
case $L_1\subset E$, there are three possible values for $\theta_E(x)$, that 
is $1$, $\frac{3}{2}$ and $\frac{7}{4}$. In particular, if
$L_1\subset E$ and $\theta_E(x)=1$, then $Z_{x}^{\rho}=L_1$.
\begin{lemma}\label{le:D1}
	Let $E\subset\Omega$ be a sliding almost minimal set, $L_1\subset E$.
	If a blow-up limit of $E$ at $x\in L_1$ is the plane $L_1$, then there
	exists $r>0$ such that $E\cap B(x,r)=L_1\cap B(x,r)$. 
\end{lemma}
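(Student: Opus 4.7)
The plan is to argue by contradiction using Lemma~\ref{mainlemma} at two nested scales: a coarse scale $\rho$ centred at $x$ to control the global picture, and a fine scale $2d_0$ centred at the orthogonal projection $y_0'$ of a hypothetical stray point $y_0\in E\setminus L_1$. The contradiction will come from forcing the rigid ratio $\dist(y_0,L_1)/(2d_0)=1/2$ to be dominated by a tolerance that can be made strictly less than~$1/2$.

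Because $L_1\subset E$ and a blow-up limit at $x$ is $L_1$, we have $\theta_E(x)=1$. By the discussion following Lemma~\ref{mainlemma}, combined with Theorem~\ref{thm:MCCB} (the only sliding minimal cone containing $L_1$ of density~$1$ is $L_1$ itself), the approximating cone must be $Z_x^{\rho}=L_1$. Hence for every $\tau>0$ one may choose $\rho=\rho(\tau)>0$ with
\[
	d_{x,\rho}(E,L_1)\leq\tau,\qquad \bigl|\mathcal{H}^{2}(E\cap B(y,t))-\mathcal{H}^{2}(L_1\cap B(y,t))\bigr|\leq\tau\rho^{2}
\]
for every ball $B(y,t)\subset B(x,\rho)$. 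Suppose, for contradiction, that $E\cap B(x,r)\neq L_1\cap B(x,r)$ for every $r>0$; since $L_1\subset E$, this produces $y_0\in(E\setminus L_1)\cap B(x,\rho/4)$. Writing $y_0'\in L_1$ for its orthogonal projection and $d_0:=\dist(y_0,L_1)>0$, the first displayed estimate gives $d_0\leq\tau\rho$, and therefore $y_0'\in L_1\cap B(x,\rho/2)$.

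I next verify the hypotheses of Lemma~\ref{mainlemma} at $y_0'$ with outer radius $\rho/4$. Since $B(y_0',\rho/4)\subset B(x,\rho)$ and $\mathcal{H}^{2}(L_1\cap B(y_0',\rho/4))=\pi(\rho/4)^{2}$, the mass comparison above yields $\theta_E(y_0',\rho/4)\leq 1+16\tau/\pi$. Furthermore $\theta_E(y_0')=1$: upper semicontinuity of the density together with the discrete list $\{1/2,3/4,1,d_{T_+},3/2,7/4\}$ forces $\theta_E(y_0')\leq 1$ for $y_0'$ close to $x$, while $L_1\subset E$ forces $\theta_E(y_0')\geq 1$. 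Fix $\tau':=1/4$ and choose $\tau>0$ small enough that $16\tau/\pi\leq\varepsilon(\tau')$ and $\tau\leq 9/80$; then $2d_0\leq 2\tau\rho\leq 9(\rho/4)/10$, so Lemma~\ref{mainlemma} applies at $y_0'$ at the scale $2d_0$. As $\theta_E(y_0')=1$ and $L_1\subset E$, the resulting approximating cone is forced to be $Z_{y_0'}^{2d_0}=L_1$, giving
\[
	d_{y_0',2d_0}(E,L_1)\leq\tau'=\tfrac{1}{4}.
\]
But $y_0\in E\cap B(y_0',2d_0)$ satisfies $\dist(y_0,L_1)/(2d_0)=1/2>1/4$, contradicting the displayed bound. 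Hence $(E\setminus L_1)\cap B(x,r)=\emptyset$ for some $r>0$, and the reverse inclusion $L_1\cap B(x,r)\subset E$ is automatic, producing the stated equality.

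The main subtlety is the transfer of the density hypothesis of Lemma~\ref{mainlemma} from $x$ to the moving base point $y_0'$: it would be unwieldy to verify it directly, but it is read off for free at the intermediate scale $\rho/4$ from the mass comparison delivered by the first application of the lemma. Coupled with the rigidity $\theta_E(y_0')=1$, this legitimates the second application of the same lemma now at $y_0'$, which is the geometric device that produces the quantitative contradiction.
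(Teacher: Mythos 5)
Your proposal is correct and follows essentially the same route as the paper: one application of Lemma \ref{mainlemma} at $x$ at a coarse scale to force $Z_x^{\rho}=L_1$ and to transfer the density hypothesis (via almost monotonicity and the gap in the list of possible densities, giving $\theta_E=1$ at nearby points of $L_1$), then a second application at the projection of a hypothetical point of $E\setminus L_1$ to contradict the Hausdorff-distance bound. The only difference is cosmetic: you fix the fine scale $2d_0$ and the tolerance $\tau'=1/4$, whereas the paper lets the radius $\rho'$ slide between $|z-y|$ and $|z-y|/\tau$; both yield the same contradiction.
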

\begin{proof}
	Without loss of generality, we assume $x=0$. $L_1$ is a blow-up limit of
	$E$ at $0$. By corollary 29.53 in \cite{David:2014}, we get that 
	$\theta_E(0)=1$. Let $\tau>0$ be a small enough number, let 
	$\varepsilon(\tau)$ be as in Lemma \ref{mainlemma}. We take
	$0<\tau_2\leq \frac{\varepsilon(\tau)}{2}$, and let $\varepsilon(\tau_2)$ be
	as in Lemma \ref{mainlemma}. We take $r>0$ such that  
	\[
	(1+\varepsilon(\tau_2))\exp\left(
	\lambda\int_0^{\frac{r}{3}}\frac{h(2t)}{t}dt \right)<\frac{3}{2}
	\]
	and
	\[
	\theta_E(0,r)\leq 1+\varepsilon(\tau_2),
	\]
	where $\lambda$ is taken as in Proposition 5.24 in \cite{David:2009}.

	By lemma \ref{mainlemma}, for any $0<\rho\leq \frac{9r}{10}$,
	\begin{equation}\label{eq:App1}
		d_{0,\rho}(E,L_1)\leq \tau_2,
	\end{equation}
	and for all ball $B(y,t)\subset B(0,\rho)$,
	\begin{equation}\label{eq:App2}
		\left\vert \H^2(E\cap B(y,t))-\H^2(L_1\cap B(y,t)) \right\vert\leq \tau_2
		\rho^2.
	\end{equation}
	Thus, in particular, for any $y\in B(0,\frac{r}{3})\cap E$, 
	\[
	\theta_E\left( y,\frac{r}{3} \right)=\left(\pi\left( \frac{r}{3} \right)^2
	\right)^{-1}\H^2\left(E\cap B\left( y,\frac{r}{3} \right)\right)
	\leq 1+\frac{4\tau_2}{\pi}.
	\]
	For any $y\in B(0,\frac{r}{3})\cap L_1$, by Theorem 28.7 in \cite{David:2014},
	we get that
	\[
	1\leq \theta_E(y)\leq \theta_{E}\left( y,\frac{r}{3} \right)\exp\left(
	\lambda\int_0^{\frac{r}{3}}\frac{h(2t)}{t}dt \right)<\frac{3}{2},
	\]
	thus 
	\[
	\theta_E(y)=1.
	\]
	Therefore, for $y\in B\left( 0,\frac{r}{3} \right)\cap L_1$,
	\[
	\theta_E\left( y,\frac{r}{3} \right)\leq
	1+\frac{4\tau_2}{\pi}\leq \theta_E(y)+\varepsilon(\tau).
	\]
	By lemma \ref{mainlemma}, for any $0<\rho\leq\frac{3r}{10}$, 
	\begin{equation}\label{eq:App3}
		d_{y,\rho}(E,L_1)\leq \tau.
	\end{equation}

	We shall deduce, from equation \eqref{eq:App3}, that for any
	$0<\rho<\frac{r}{3}$, 
	\[
	E\cap B(0,\rho)=L_1\cap B(0,\rho).
	\]
	Once we have proved this, the desire result follows. We assume, for the sake
	of a contradiction, that 
	\[
	E\cap B(0,\rho)\setminus L_1\neq \emptyset.
	\]
	Let $z\in E\cap B(0,\rho)\setminus L_1$, and let $y$ be the projection of
	$z$ on $L_1$, then $0<\left\vert z-y \right\vert<\rho$. We choose $\rho'$
	such that 
	\[
	\left\vert z-y \right\vert<\rho'<\min\left\{\rho,\frac{\left\vert z-y
	\right\vert}{\tau} \right\}.
	\]
	From equation \eqref{eq:App3}, we can get that 
	\[
	\left\vert z-y \right\vert\leq \rho' d_{y,\rho'}(E,L_1)\leq
	\tau\rho'<\left\vert z-y \right\vert,
	\]
	absurd.
\end{proof}
\begin{lemma}\label{le:removeboundary}
	Let $\Omega$, $L_1$ and $U$ be as in Lemma \ref{mainlemma}, let $E\subset \Omega$
	be a $(U,h)$-sliding-almost-minimal set with $L_1\subset E$. Let
	$F=\overline{E\setminus L_1}$. Then $\H^2(F\cap L_1)=0$, and $F$ is also
	$(U,h)$-sliding-almost-minimal.
\end{lemma}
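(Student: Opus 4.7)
The plan is to prove $\H^2(F\cap L_1)=0$ first via a density argument, then derive the almost-minimality of $F$ from that of $E$ by a short set-theoretic computation using the alternative form of the defining inequality. For the measure-zero statement, pick $x \in F \cap L_1$. By Theorem \ref{thm:MCCB}, every blow-up of $E$ at $x$ is either $L_1$ itself or $L_1 \cup Z$ with $Z$ of type $\mathbb{P}_{+}$ or $\mathbb{Y}_{+}$. The first possibility is ruled out by Lemma \ref{le:D1}: it would force $E$ to coincide with $L_1$ on some ball around $x$, contradicting $x \in \overline{E\setminus L_1}$. Since the density of any blow-up limit equals $\theta_E(x)$ (Corollary 29.53 in \cite{David:2014}), this gives $\theta_E(x) \in \{3/2,\, 7/4\}$; in particular $\theta_E(x) > 1$.

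Now apply Lebesgue's differentiation theorem to the locally finite Borel measure $\mu := \H^2 \llcorner E$ and its measurable subset $L_1\subset E$: at $\mu$-almost every $x \in L_1$,
\[
1 \;=\; \lim_{r \to 0} \frac{\mu(L_1 \cap B(x,r))}{\mu(B(x,r))} \;=\; \lim_{r\to 0}\frac{\pi r^2}{\H^2(E \cap B(x,r))} \;=\; \frac{1}{\theta_E(x)},
\]
so $\theta_E(x)=1$ holds for $\H^2$-a.e.\ $x\in L_1$. Hence $F\cap L_1 \subset \{x\in L_1 : \theta_E(x) > 1\}$ has vanishing $\H^2$-measure.

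For the almost-minimality of $F$, I will verify the alternative form $\H^d(E \setminus \varphi_1(E)) \leq \H^d(\varphi_1(E)\setminus E) + h(\delta)\delta^d$ (equivalent to the standard definition here because the sliding boundary $L_1$ is smooth). Given any $\delta$-sliding-deformation $\{\varphi_t\}$ in $U$, decompose $E = F\cup L_1$. Since $\varphi_1(L_1) \subset L_1 \subset E$,
\[
\varphi_1(E)\setminus E \;=\; \varphi_1(F) \setminus (F\cup L_1) \;\subset\; \varphi_1(F) \setminus F,
\]
while $\H^2(F\cap L_1)=0$ combined with $\varphi_1(L_1)\subset L_1$ gives
\[
\H^2(E \setminus \varphi_1(E)) \;\geq\; \H^2(F \setminus \varphi_1(E)) \;\geq\; \H^2(F \setminus \varphi_1(F) \setminus L_1) \;=\; \H^2(F \setminus \varphi_1(F)).
\]
Feeding both bounds into the inequality for $E$ yields $\H^2(F\setminus\varphi_1(F)) \leq \H^2(\varphi_1(F)\setminus F) + h(\delta)\delta^d$, the alternative form of $(U,h)$-sliding-almost-minimality for $F$. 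The main obstacle is ensuring $\H^2(F \cap L_1) = 0$, which I handle via Lebesgue differentiation after Lemma \ref{le:D1} supplies the strict inequality $\theta_E(x) > 1$ on $F \cap L_1$; once that is in hand, the almost minimality of $F$ is a short set-theoretic accounting.
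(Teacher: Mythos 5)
Your proof is correct, and the second half (deducing the almost-minimality of $F$ from that of $E$ via the alternative inequality $\H^2(E\setminus\varphi_1(E))\leq\H^2(\varphi_1(E)\setminus E)+h(\delta)\delta^2$ and the set-theoretic identities $\varphi_1(E)\setminus E\subset\varphi_1(F)\setminus F$, $\H^2(E\setminus\varphi_1(E))\geq\H^2(F\setminus\varphi_1(F))$) is essentially identical to the paper's, which likewise invokes Proposition 20.9 of \cite{David:2014} in both directions. Where you genuinely diverge is the proof that $\H^2(F\cap L_1)=0$. Both arguments start from the same key observation --- at every $x\in F\cap L_1$ the blow-up classification (Theorem \ref{thm:MCCB}) together with Lemma \ref{le:D1} forces $\theta_E(x)\geq\frac{3}{2}$ --- but the paper then runs a hands-on contradiction: it picks a density point of $G=F\cap L_1$, extracts a Vitali family of small balls centered on $G$ where $\theta_E\geq\frac{3}{2}(1-\varepsilon)$, adds two balls on either side of $L_1$ where $E$ is measure-close to the approximating cone $Z_0^\rho$ from Lemma \ref{mainlemma}, and shows the total mass in $B(0,\rho)$ exceeds what the almost-monotonicity bound $\theta_E(0,\rho)\leq\theta_E(0)(1+\varepsilon)$ permits. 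You instead apply the Besicovitch--Lebesgue density theorem to the Radon measure $\mu=\H^2\llcorner E$ and the subset $L_1$, obtaining $\lim_{r\to0}\mu(L_1\cap B(x,r))/\mu(B(x,r))=1/\theta_E(x)=1$ for $\H^2$-a.e.\ $x\in L_1$, so that $F\cap L_1\subset\{\theta_E>1\}$ is null. This is shorter and cleaner; its only extra inputs are the local finiteness of $\H^2\llcorner E$ (available from the local Ahlfors regularity of sliding almost minimal sets, and implicitly used throughout the paper anyway) and the standard differentiation theorem for Radon measures, whereas the paper's covering argument is self-contained modulo Lemma \ref{mainlemma} but requires the quantitative cone-approximation estimates and a page of bookkeeping. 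Both are valid; yours buys brevity at the cost of citing one more classical theorem.
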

\begin{proof}
	We put $G=F\cap L_1$. Let $0<\varepsilon<1/100$. We assume, for the sake of 
	contradiction, that $\H^2(G)>0$. Since $G$ is a subset of $L_1$, it is 
	rectifiable, thus for $\H^2$-a.e. $x\in G$, $\theta_{G}(x)=1$. Without loss 
	of generality, we suppose that $\theta_{G}(0)=1$, then there exists a radius
	$r_1>0$ such that for all $0<r\leq r_1$,
	\begin{equation}\label{eq:RMB-1}
		\theta_{G}(0,r)\geq 1-\varepsilon.
	\end{equation}
	Since $E$ is sliding almost minimal, by Theorem 28.7 (almost monotonicity of
	density property) in \cite{David:2014}, we can find a radius $r_2>0$ such that 
	for all $0<r\leq r_2$, 
	\begin{equation}\label{eq:RMB0}
		1-\varepsilon \leq \frac{\theta_E(0,r)}{\theta_{E}(0)}\leq 1+\varepsilon.
	\end{equation}
	Since $E$ is sliding almost minimal and $L_1\subset E$, by Lemma
	\ref{mainlemma}, there exists $r>0$ such that for any $0<\rho\leq r$, there
	exists a sliding minimal cone $Z_{0}^{\rho}\supset L_1$ such that 
	\begin{equation}\label{eq:RMB1}
		d_{0,\rho}(E,Z_0^{\rho})\leq \varepsilon
	\end{equation}
	and for any ball $B(y,t)\subset B(0,\rho)$,
	\begin{equation}\label{eq:RMB2}
		\left\vert \H^2(E\cap B(y,t))-\H^2(Z_0^{\rho}\cap B(y,t)) \right\vert\leq
		\varepsilon \rho^2
	\end{equation}

	We take $0<\rho\leq \min\{ r,r_1,r_2 \}$, and consider a collection of balls 
	\[
	\mathcal{V}=
	\left\{ B(x,s)\midd {x\in G\cap B(0,\rho),s\leq \varepsilon \rho,
	B(x,s)\subset B(0,\rho) \atop \theta_{G}(x,s)\geq 1-\varepsilon,
	\theta_{E}(x,s)\geq (1-\varepsilon)\theta_{E}(x)}  \right\},
	\]
	it is a Vitali covering for $G\cap B(0,\rho)$. By a Vitali's covering 
	theorem for the Hausdorff measure, see for example, there exists a finite
	or countably infinite disjoint subcollection $\{ B_i \}_{i\in I}\subset
	\mathcal{V}$ such that 
	\begin{equation}\label{eq:RMB3}
		\H^2\left( G\cap B(0,\rho)\setminus \bigcup_{i\in I}B_i \right)=0.
	\end{equation}
	We now consider two balls $B_1'=B(y_1,t_1)$ and $B_1'=B(y_2,t_2)$, where 
	$y_1=(0,0,\frac{1+\varepsilon}{2}\rho)$,
	$y_2=(0,0,-\frac{1+\varepsilon}{2}\rho)$ and
	$t_1=t_2=\frac{1-\varepsilon}{2}\rho$. We can see that $B_1'\subset
	B(0,\rho)$ and $B_2'\subset B(0,\rho)$, thus by equation \eqref{eq:RMB2}, we
	can get that 
	\begin{equation}\label{eq:RMB4}
		\H^2(E\cap B_1')\geq \H^2(Z_0^{\rho}\cap B_1')-\varepsilon\rho^2
	\end{equation}
	and 
	\begin{equation}\label{eq:RMB5}
		\H^2(E\cap B_2')\geq \H^2(Z_0^{\rho}\cap B_2')-\varepsilon\rho^2.
	\end{equation}
	It is very easy to see that $\{ B_1',B_2' \}\cup \{ B_i \}_{i\in I}$ is a 
	family of disjoint balls and 
	\begin{equation}\label{eq:RMB6}
		B_1'\cup B_2'\cup \bigcup_{i\in I}B_i\subset B(0,\rho).
	\end{equation}
	For $i\in I$, we denote $B_i=B(x_i,s_i)$, then $x_i\in G$ and
	$\theta_E(x_i)\geq \frac{3}{2}$; otherwise, $\theta_E(x_i)=1$, any blow-up
	limit of $E$ at $x_i$ must be $L_1$, and by Lemma \ref{le:D1}, there is a
	small ball $B(x_i,r')$ such that $E\cap B(x_i,r')=L_1\cap B(x_i,r')$, that
	is impossible.

	By our choice of $\mathcal{V}$, we have that 
	\[
	\theta_E(x_i,s_i)\geq (1-\varepsilon)\theta_E(x_i)\geq
	\frac{3}{2}(1-\varepsilon),
	\]
	thus
	\begin{equation}\label{eq:RMB7}
		\H^2(E\cap B_i)\geq \frac{3}{2}(1-\varepsilon)\pi s_i^2\geq
		\frac{3}{2}(1-\varepsilon)\H^2(G\cap B_i),
	\end{equation}
	and combine with equations \eqref{eq:RMB3} and \eqref{eq:RMB-1}, to obtain
	\begin{equation}\label{eq:RMB8}
		\begin{aligned}
			\sum_{i\in I}\H^2(E\cap B_i)&\geq \frac{3}{2}(1-\varepsilon)\sum_{i\in
			I}\H^2(G\cap B_i)\\
			&\geq \frac{3}{2}(1-\varepsilon)\H^2(G\cap B(0,\rho))\\
			&\geq \frac{3\pi}{2}(1-\varepsilon)^2\rho^2.
		\end{aligned}
	\end{equation}

	Since $0\in G$, we have that $\theta_E(0)\geq \frac{3}{2}$, thus
	$\theta_E(0)=\frac{3}{2}$ or $\theta_E(0)=\frac{7}{4}$. 

	If $\theta_E(0)=\frac{3}{2}$, the sliding minimal $Z_{0}^{\rho}$ which we 
	chose in \eqref{eq:RMB1} can be written $Z_0^{\rho}=L_1\cap Z^{\rho}$, where
	$Z^{\rho}$ is a sliding minimal cone of type $\mathbb{P}_{+}$. In
	this case, $Z_0^{\rho}\cap B_i'$, $i=1,2$, are two disks with radius
	$\frac{1-\varepsilon}{2}\rho$, thus 
	\[
	\H^2(Z_0^{\rho}\cap B_i')=\pi\left(\frac{1-\varepsilon}{2}\rho\right)^2,
	\]
	combine this equation with equations \eqref{eq:RMB4}, \eqref{eq:RMB5},
	\eqref{eq:RMB6} and \eqref{eq:RMB8}, we can get that
	\begin{equation}\label{eq:RMB9}
		\begin{aligned}
			\H^2(E\cap B(0,\rho))&\geq \sum_{i=1}^{2}\H^2(E\cap B_i')+\sum_{i\in
			I}\H^2(E\cap B_i)\\
			&\geq 2\pi \left( \frac{1-\varepsilon}{2}\rho
			\right)^2+\frac{3}{2}\pi(1-\varepsilon)^2\rho^2-2\varepsilon\rho^2\\
			&> (2-5\varepsilon)\pi\rho^2,
		\end{aligned}
	\end{equation}
	but from equation \eqref{eq:RMB0}, we can get that 
	\begin{equation}\label{eq:RMB10}
		\H^2(E\cap B(0,\rho))\leq \frac{3}{2}(1+\varepsilon)\pi \rho^2,
	\end{equation}
	which contradict with equation \eqref{eq:RMB9}, because
	$0<\varepsilon<\frac{1}{100}$.

	If $\theta_E(0)=\frac{7}{4}$, a very similar calculation as above case, we
	can get that
	\[
	\H^2(Z_0^{\rho})=3\times \frac{\pi}{2}\left( \frac{1-\varepsilon}{2}\rho
	\right)^2,
	\]
	and 
	\[
	\H^2(E\cap B(0,\rho))\geq
	\frac{3}{4}(1-\varepsilon)^{2}\pi\rho^2+\frac{3}{2}(1-\varepsilon)^2\pi
	\rho^2 -2\varepsilon\rho^2>\left( \frac{9}{4}-\frac{11}{2}\varepsilon
	\right)\pi\rho^2,
	\]
	but from equation \eqref{eq:RMB0}, we obtain that 
	\[
	\H^2(E\cap B(0,\rho))\leq \frac{7}{4}(1+\varepsilon)\pi \rho^2,
	\]
	we also get a contradiction. We proved that $\H^2(F\cap L_1)=0$. We will go
	to prove that $F$ is also sliding almost minimal.

	Let $\{ \varphi_t \}_{0\leq t\leq 1}$ be any $\delta$-sliding-deformation. 
	Since $E$ is $(U,h)$-sliding-almost-minimal, applying Proposition 20.9 in
	\cite{David:2014}, we get that 
	\begin{equation}\label{eq:RMB11}
		\H^2(E\setminus \varphi_1(E))\leq \H^2(\varphi_1(E)\setminus
		E)+h(\delta)\delta^2.
	\end{equation}
	Since $\varphi_1(E)\supset L_1$, we can get that 
	\begin{equation}\label{eq:RMB12}
		\H^2(E\setminus \varphi_1(E))=\H^2( (E\setminus L_1)\setminus
		\varphi_1(E))=\H^2\left( (E\setminus L_1)\setminus \varphi(E\setminus L_1)
		\right).
	\end{equation}
	We know that $F=\overline{E\setminus L_1}$ and $\H^2(F\cap L_1)=0$, thus 
	\begin{equation}\label{eq:RMB13}
		\H^2(F\setminus \varphi_1(F))=\H^2(E\setminus \varphi_1(E)).
	\end{equation}
	Similarly, we can get that 
	\begin{equation}\label{eq:RMB14}
		\H^2(\varphi_1(E)\setminus E)=\H^2(\varphi(E\setminus L_1)\setminus E)\leq
		\H^2(\varphi(E\setminus L_1)\setminus (E\setminus L_1)),
	\end{equation}
	thus
	\begin{equation}\label{eq:RMB15}
		\H^2(\varphi_1(E)\setminus E)\leq \H^2(\varphi_1(F)\setminus F).
	\end{equation}
	From inequalities \eqref{eq:RMB11}, \eqref{eq:RMB13} and \eqref{eq:RMB15},
	we obtain that 
	\[
	\H^2(F\setminus \varphi_1(F))\leq \H^2(\varphi_1(F)\setminus
	F)+h(\delta)\delta^2.
	\]
	Applying Proposition 20.9 in \cite{David:2014}, we get that $F$ is
	$(U,h)$-sliding-almost-minimal.
\end{proof}
	If $\Omega$, $L_1$, $U$, $E$ and $F$ are as in lemma
	\ref{le:removeboundary}, and we suppose that $0\in F$, then $\theta_{F}(0)$ 
	can only take two values $\frac{1}{2}$ and $\frac{3}{4}$. 
	Indeed, since $L_1\subset E$, any blow-up limit $Z$ of $E$ at $0$ is a sliding
	minimal cone which contains the boundary $L_1$, thus $Z=L_1$ or $Z=L_1\cup
	Z'$, $Z'$ is a sliding minimal cone of type $\mathbb{P}_{+}$ or
	$\mathbb{Y}_{+}$, hence the density $\theta_{E}(0)$ can 
	only take three values, $1$, $\frac{3}{2}$ and $\frac{7}{4}$. But if
	$\theta_E(0)=1$, then by Lemma \ref{le:D1}, we can see that $0\not\in F$.
	Therefore, $\theta_E(0)=\frac{3}{2}$ or $\frac{7}{4}$. We see that 
	\begin{equation}\label{eq:MWOB3}
		\theta_E(0,r)=\theta_F(0,r)+1,
	\end{equation}
	thus
	\[
	\theta_E(x)=\theta_F(x)+1,
	\]
	and $\theta_F(0)=\frac{1}{2}$ or $\frac{3}{4}$.

\begin{lemma}\label{le:BDAP}
	Let $\Omega$ and $L_1$ be as in \eqref{simple}, $\Pi_{L_1}:\mathbb{R}^{3}\to
	L_1$ be the orthogonal projection onto $L_1$. Suppose that $U$ is an open
	set, $E\subset\Omega$ is $(U,h)$-sliding-almost-minimal, $0\in E\cap L_1\cap
	U$ and $\theta_E(0)=\frac{1}{2}$ or $\frac{3}{4}$. If $\varepsilon>0$ is small
	enough, $B(0,2r)\subset U$, and for any $x\in F\cap L_1\cap B(0,r)$, and any 
	$0<\rho\leq 3r/5$, there exists a sliding minimal cone $Z_x^{\rho}$ of type 
	$\mathbb{P}_{+}$ or $\mathbb{Y}_{+}$ centered at $x$, such that 
	\[
	d_{x,\rho}(E,Z_{x}^{\rho})\leq \varepsilon,
	\]
	then for any $z\in E\cap B(0,r/5)$, we can find a point $a\in E\cap L_1\cap 
	B(0,3r/5)$ such that 
	\begin{equation}\label{eq:BDAP1}
		\left\vert \Pi_{L_1}(z)-a \right\vert\leq 8\varepsilon \left\vert z-a \right\vert.
	\end{equation}
\end{lemma}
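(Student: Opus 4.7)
I will take $a$ to be a nearest point to $z$ in the compact set $E \cap L_1 \cap \overline{B(0, 3r/5)}$. Since $0 \in E \cap L_1$, the minimum distance $d := |z-a|$ satisfies $d \leq |z| \leq r/5$, which forces $a \in \overline{B(z, r/5)} \subset B(0, 2r/5)$, so $a \in E \cap L_1 \cap B(0, r)$ and the hypothesis is available at $a$.

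Translating so $a = 0$, I apply the hypothesis at scale $\rho := 2d \leq 2r/5 < 3r/5$, obtaining a cone $Z_a^\rho$ of type $\mathbb{P}_+$ or $\mathbb{Y}_+$ centered at $a$ with $d_{a,\rho}(E, Z_a^\rho) \leq \varepsilon$. Since $z \in E \cap B(a, \rho/2)$, there exists $w \in Z_a^\rho$ with $|z-w| \leq \varepsilon\rho = 2\varepsilon d$. The point $w$ lies in a face of $Z_a^\rho$ which is a half-plane perpendicular to $L_1$ through $a$, whose foot is a line (or ray) along a unit vector $u \in L_1$. Minimizing $|z - y|^2$ over $y$ in this face gives $w = a + (p\cdot u)\, u + z_3 e_3$ for $p := \Pi_{L_1}(z) - a$, and the Hausdorff bound then yields $|p^\perp| := |p - (p\cdot u)u| \leq 2\varepsilon d$. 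Writing $s := p\cdot u$, we have $|p|^2 = s^2 + |p^\perp|^2$, so the target inequality $|\Pi_{L_1}(z) - a| \leq 8\varepsilon d$ reduces to the bound $|s| \leq 7\varepsilon d$.

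I establish this by contradiction. Assume $|s| > 7\varepsilon d$; WLOG $s > 0$. The foot point $q := a + su \in L_1$ then satisfies $|z - q|^2 = |p^\perp|^2 + z_3^2 = d^2 - s^2 < d^2(1 - 49\varepsilon^2)$. A direct quadratic computation shows that if one can produce $a' \in E \cap L_1$ with $|q - a'| < s^2/(d + \sqrt{d^2 - s^2})$, then $|z - a'| < d$, violating the minimality of $a$. Under the hypothesis $s > 7\varepsilon d$, this target distance is at least $s^2/(2d) > 24\varepsilon^2 d$, so any $a' \in E \cap L_1$ within $O(\varepsilon^2 d)$ of $q$ will close the contradiction.

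The principal obstacle is producing such a boundary point $a'$. The Hausdorff bound at $a$ at scale $\rho' = 2s$ only provides a point $e \in E$ with $|e - q| \leq 2\varepsilon s$ and $e_3 \leq 2\varepsilon s$, rather than a point of $E \cap L_1$. To promote $e$ to a boundary point, I iterate the hypothesis along the cone's foot: at each stage I use the fact that the approximation is given at \emph{every} boundary point of $E \cap L_1$ in $B(0, r)$ at \emph{every} scale $\leq 3r/5$, so as soon as one finds a boundary point slightly closer to $q$, applying the approximation there at a proportionally smaller scale tightens the bound by a factor $\varepsilon$. Combined with the almost-monotonicity of density at the boundary (Theorem~28.7 in~\cite{David:2014}), which forbids accumulation of positive-height low-density points near $\ell$, this iteration yields a sequence of boundary points converging to a point $a' \in E \cap L_1$ within $O(\varepsilon^2 d)$ of $q$, completing the contradiction. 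The $\mathbb{Y}_+$ case differs only in that the foot consists of three rays from $a$; one simply works on the ray containing $\Pi_{L_1}(w)$.
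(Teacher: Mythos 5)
Your setup is clean and the reductions are correct up to the point you yourself flag as ``the principal obstacle,'' but that obstacle is a genuine gap, and it is created by your choice of $a$. You take $a$ to be the point of $E\cap L_1$ nearest to $z$; the gain obtained by moving from $a$ to the foot point $q=a+su$ is then only second order, $d-\sqrt{d^2-s^2}\approx s^2/(2d)$, so with $s\sim\varepsilon d$ you must locate a point of $E\cap L_1$ within $O(\varepsilon^2 d)$ of $q$ to contradict minimality. The hypothesis, however, only gives Hausdorff control with error $\varepsilon\cdot(\text{scale})$, and it can only be invoked at points that are \emph{already} in $E\cap L_1$; at scale $2s\sim\varepsilon d$ around $a$ it produces a point of $E$ (not of $E\cap L_1$) within $2\varepsilon s\sim\varepsilon^2 d$ of $q$, and there is no mechanism to promote it to a boundary point. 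Your proposed iteration has a chicken-and-egg problem: to apply the hypothesis at a finer scale near $q$ you must first exhibit a point of $E\cap L_1$ near $q$, which is exactly what is being sought; and even granting a first boundary point $b_1$ with $|b_1-q|\leq C\varepsilon d$, the new cone is centered at $b_1$ and its foot need not pass within $O(\varepsilon^2 d)$ of $q$, so the iteration can drift rather than converge. The appeal to almost-monotonicity of density does not supply the missing localization. In short, the required precision is one order in $\varepsilon$ finer than anything the hypotheses deliver, so the argument as structured cannot close.

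The paper avoids this entirely by choosing $a$ to (nearly) minimize the distance from $z'=\Pi_{L_1}(z)$ --- not from $z$ --- to $E\cap L_1$, i.e.\ $|z'-a|\leq(1+\varepsilon)\dist(z',E\cap L_1)$. With that choice the contradiction is first order: setting $\rho=2|z-a|$, one finds $z''$ on the foot of $Z_a^{\rho}$ with $|z'-z''|\leq\varepsilon\rho$, and if $|z''-a|$ were larger than a fixed multiple of $\varepsilon\rho$, then any point of $E\cap L_1$ within $\tfrac32\varepsilon\rho$ of $z''$ would already be strictly closer to $z'$ than $a$ is --- no $\varepsilon^2$ precision is needed. (The remaining alternative, that $E\cap L_1$ misses a ball of radius $\sim\varepsilon\rho$ around $z''$, is excluded by a deformation/retraction argument using almost-minimality; the paper only sketches this step, but it is needed there at scale $\varepsilon\rho$, where it is plausible, rather than at scale $\varepsilon^2 d$.) To repair your proof you would essentially have to switch to this choice of $a$, which changes the skeleton of the argument.
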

\begin{proof}
	For any $z\in E\cap B(0,r/5)$, we put $z'=\Pi_{L_1}(z)$. We take a point
	$a\in E\cap L_1$ such that 
	\[
	|z'-a|\leq (1+\varepsilon)\dist(z',E\cap L_1).
	\]
	If $z'\in E\cap L_1$, $a=z'\in B(0,r/5)$, then nothing needs to be done. 
	If $z'\not\in E\cap L_1$, we claim that $a$ is a point which we desire.

	It is quite easy to see that $a\in B(0,3r/5)$; otherwise 
	\[
	\frac{2r}{5}\leq |z'-a|\leq (1+\varepsilon)\dist(z',E\cap L_1)\leq
	(1+\varepsilon)|z'-0|\leq (1+\varepsilon)\frac{r}{5};
	\]
	this gives a contradiction.

	We put $\rho=2|a-z|$. Since $d_{a,\rho}(E,Z_a^{\rho})\leq \varepsilon$, and
	$Z_a^{\rho}$ is perpendicular to $L_1$, we can find $z''\in Z_a^{\rho}\cap
	L_1$ such that $|z'-z''|\leq \varepsilon\rho$. 
	
	We claim that $|z''-a|\leq 3\varepsilon\rho$; once we have proved our claim,
	we can get that 
	\[
	|\Pi_{L_1}(z)-a|\leq |z'-z''|+|z''-a|\leq 4\varepsilon\rho=8\varepsilon |x-z|
	\]
	
	We assume, for the sake of a contradiction, that $|z''-a|>3\varepsilon\rho$,
	then 
	\[
	|a-z'|\geq |a-z''|-|z'-z''|>2\varepsilon\rho.
	\] 
	If $B(z'',3\varepsilon\rho/2)\cap E\cap L_1\neq\emptyset$, we take 
	$x\in B(z'',3\varepsilon\rho/2)\cap E\cap L_1$, then 
	\[
	|z'-x|\leq |z'-z''|+|z''-x|\leq \frac{5}{2}\varepsilon\rho,
	\]
	and 
	\[
	|z-x'|\geq \dist(z',E\cap L_1)\geq \frac{1}{1+\varepsilon}|z'-a|\geq
	\frac{2\varepsilon\rho}{1+\varepsilon},
	\]
	thus
	\[
	\frac{2\varepsilon\rho}{1+\varepsilon}\leq \frac{5}{2}\varepsilon\rho;
	\]
	this is a contradiction.

	If $B(z'',3\varepsilon\rho/2)\cap E\cap L_1=\emptyset$, we can construct a  
	projection to show that $E$ is not almost minimal.

\end{proof}
\begin{lemma}\label{lemmaplane}
	Let $\Omega$, $L_1$ be as in \eqref{simple}, U an open set with $0\in U$.
	Suppose that $E\subset \Omega$ is $(U,h)$-sliding-almost-minimal.
	If $\theta_E(0)=\frac{1}{2}$, then for each small $\tau>0$, 
	we can find a radius $r>0$, a sliding minimal cone $Z$ of type 
	$\mathbb{P}_{+}$ and a biH\"older map $\phi: B(0,3r/2)\cap \Omega\to
	B(0,2r)\cap \Omega$ such that 
	\begin{equation}\label{eq:lemmaplane}
		\begin{gathered}
			\phi(x)\in L_1\text{ for }x\in L_1,\| f-\id \|_{\infty}\leq \tau,\\
			(1+\tau)^{-1}\left\vert z-y \right\vert^{1+\tau}\leq \left\vert
			\phi(z)-\phi(y) \right\vert\leq (1+\tau)\left\vert z-y
			\right\vert^{\frac{1}{1+\tau}},\\
			B(0,r)\cap\Omega\subset \phi\left( B\left( 0,\frac{3r}{2} \right)\cap
			\Omega \right)\subset B(0,2r)\cap \Omega,\\
			E\cap B(0,r)\subset \phi\left(Z\cap B\left( 0,\frac{3r}{2}
			\right)\right)\subset E\cap B(0,2r).
		\end{gathered}
	\end{equation}
\end{lemma}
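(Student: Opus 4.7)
The plan is to verify the hypotheses of Corollary \ref{co:PT} in a small ball around $0$ and then invoke that corollary to obtain the parameterization $\phi$. The cone $Z$ in the conclusion will be the $\mathbb{P}_{+}$ half-plane produced by the approximation at $0$. Since $\theta_E(0) = 1/2$, the almost monotonicity of density (Theorem 28.7 of \cite{David:2014}) lets me pick $r$ so small that $\theta_E(0, r) \le 1/2 + \epsilon$, $h(2r) \le \varepsilon(\tau_0)$ and $\int_0^{2r} h(t)/t\,dt \le \varepsilon(\tau_0)$, where $\tau_0$ is a tolerance to be fixed at the end in terms of $\tau$. Lemma \ref{mainlemma} then yields a sliding minimal cone $Z = Z_0^r$ centered at $0$, and since the only sliding minimal cones in $\Omega$ with density $1/2$ at the center are cones of type $\mathbb{P}_{+}$, $Z$ is of the required type.

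Next, I promote this approximation at $0$ to a uniform approximation at every $y \in E \cap B(0, r')$ and every scale $0 < t \le 3r'$, for some $r'$ comparable to $r$. The crucial preliminary step is to establish that $\theta_E(y) = 1/2$ for every such $y$. The almost-monotonicity of $\theta_E(y, s) e^{\lambda A(s)}$ together with the trivial bound $\mathcal{H}^2(E \cap B(y, s)) \le \mathcal{H}^2(E \cap B(0, r))$ for $|y|+s \le r$ yields $\theta_E(y, s) \le 1/2 + C\epsilon$ at an intermediate scale $s$, hence $\theta_E(y) \le 1/2 + C\epsilon$ upon letting $s \to 0$. Both Jean Taylor's list of interior densities $\{1/2, 3/2, d_{\mathbb{T}}\}$ and the boundary list from Remark \ref{re:listofminimalcones} are discrete, each with minimum $1/2$, so for $\epsilon$ small enough $\theta_E(y) = 1/2$.

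With $\theta_E(y) = 1/2$ everywhere in a neighborhood of $0$, I produce the required approximating cones $Z(y, t)$ in three regimes. If $0 < t < \dist(y, L_1)$, then $B(y, t)$ lies in the interior and the classical interior analogue of Lemma \ref{mainlemma} (in the style of Proposition 7.32 of \cite{David:2009}) returns a plane through $y$ approximating $E \cap B(y, t)$ up to $d_{y,t} \le \epsilon'$. If $y \in L_1$, Lemma \ref{mainlemma} itself gives a sliding minimal cone, which the density constraint forces to be of type $\mathbb{P}_{+}$. If $y \notin L_1$ and $t \ge \dist(y, L_1)$, Lemma \ref{le:BDAP} (whose hypotheses have just been verified) locates a boundary point $a \in E \cap L_1$ with $|a - y| \lesssim t$, and the $\mathbb{P}_{+}$ approximation of $E$ near $a$ at scale comparable to $t$ furnishes, up to a controlled loss of constants, a sliding minimal cone approximating $E$ near $y$ at scale $t$.

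All hypotheses of Corollary \ref{co:PT} are then satisfied once $\tau_0$ is chosen small enough in terms of $\tau$, and the corollary returns the desired biH\"older map $\phi$ with the properties \eqref{eq:lemmaplane}. The main difficulty is the uniform transition between the interior plane approximation and the boundary $\mathbb{P}_{+}$ approximation near $L_1$: one has to track how the quality parameter $\epsilon$ deteriorates through Lemma \ref{le:BDAP} and ensure that the final parameter fed into Corollary \ref{co:PT} still lies below its threshold. A secondary technical point is verifying $\theta_E(y) = 1/2$ throughout a neighborhood of $0$, which relies essentially on the discreteness of both the interior and the boundary density lists.
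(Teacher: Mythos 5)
Your overall architecture --- construct approximating cones $Z(y,t)$ in three regimes (interior small scales, boundary points, interior points at large scales via Lemma \ref{le:BDAP}) and feed them into Corollary \ref{co:PT} --- is exactly the paper's, but your ``crucial preliminary step'' is false as stated, and the mechanism you propose for it does not work. You claim $\theta_E(y)=\frac{1}{2}$ for \emph{every} $y\in E\cap B(0,r')$. For a point $y\in E\setminus L_1$ this is impossible: the density of a reduced two-dimensional almost minimal set at a point away from the boundary is at least $1$ (the interior density list is $\{1,\frac{3}{2},d_T\}$, not $\{\frac{1}{2},\frac{3}{2},d_T\}$ --- a plane has density $1$ at its points). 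What the paper actually proves is $\theta_E(x)=\frac{1}{2}$ for $x\in E\cap L_1$ near $0$ and $\theta_E(z)=1$ for $z\in E\setminus L_1$ near $0$; the latter is precisely what licenses the interior plane approximation (Lemma 16.11 of \cite{David:2009}) at small scales, which you also invoke --- so your third paragraph is inconsistent with your second.

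Moreover, the tool you propose, the ``trivial bound'' $\H^2(E\cap B(y,s))\le \H^2(E\cap B(0,r))$ plus monotonicity, cannot deliver the needed control. It yields $\theta_E(y,s)\le(\frac{1}{2}+\epsilon)(r/s)^2$, which is close to $\frac{1}{2}$ only when $s$ is within a factor $1+O(\epsilon)$ of $r$, hence only for $y$ within distance $O(\epsilon r)$ of the origin --- not on a ball of radius comparable to $r$. The correct mechanism, which is the real content of the paper's proof, is the scale-localized mass comparison \eqref{eq:main} of Lemma \ref{mainlemma}: for a boundary point $x$ one compares $\H^2(E\cap B(x,t))$ with $\H^2(Z^{\rho}\cap B(x,t))\le\frac{\pi}{2}t^2$ to get $\theta_E(x,t)\le\frac{1}{2}+C\tau_1$ and hence $\theta_E(x)=\frac{1}{2}$; for an interior point $z$ one takes the nearby boundary point $x$ produced by Lemma \ref{le:BDAP}, sets $r_1=\frac{1}{2}\dist(z,L_1)$, and compares $\H^2(E\cap B(z,r_1))$ with $\H^2(Z_x^{\rho}\cap B(z,r_1))\le\pi r_1^2$ to get $\theta_E(z,r_1)\le 1+C\tau_2$ and hence $\theta_E(z)=1$. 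Without this localized comparison you have no way to verify the density hypotheses of the interior regularity lemma at scales $t<\dist(y,L_1)$, and the three-regime construction does not close.
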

\begin{proof}
	We can assume that $U$ is an open ball $B(0,R)$ for some $R>0$. 
	For any $\tau\in (0,1]$, we let $\varepsilon(\tau)$ be as in Lemma
	\ref{mainlemma}, we suppose that $\tau$ is so small, that 
	\[
	(1+\varepsilon(\tau))e^{(\lambda+\alpha) \varepsilon(\tau)}<\frac{3}{2},
	\]
	where $\lambda$ is taken as in Proposition 5.24 in \cite{David:2009}, and
	$\alpha$ is taken as in Theorem 28.7 in \cite{David:2014}.
	Let $\tau_2>0$ and $\varepsilon(\tau_2)$ be as in Lemma \ref{mainlemma} and
	such that $100\tau_2\leq \tau $ and
	\[
	\left( \frac{1}{2}+\varepsilon(\tau_2)
	\right)e^{\alpha \varepsilon(\tau)}<\frac{3}{4}.
	\]
	We take $0<\tau_{1}\leq\min\{\tau_2,\varepsilon(\tau_2)\}/100$, and let
	$\tau_{1}$, $\varepsilon(\tau_{1})$ be also as in Lemma \ref{mainlemma}. 
	We always suppose that 
	$\varepsilon(\tau_1)<\varepsilon(\tau_2)<\varepsilon(\tau)$.

	By Theorem 28.7 in \cite{David:2014}, we can find $r_{0}\in (0,R)$ such that 
	\[
	h(2r_0)\leq \varepsilon(\tau_1),\ A(r_0)\leq \varepsilon(\tau_1),\
	\theta_E(0,r_{0})\leq \frac{1}{2}+\varepsilon(\tau_{1}),
	\]
	where 
	\[
	A(r)=\int_{0}^{2r}\frac{h(t)}{t}dt.
	\]
	By using Lemma \ref{mainlemma}, for any $r\in (0,9r_{0}/10]$, there exists a
	minimal cone $Z^r$ of type $\mathbb{P}_{+}$ center at $0$ such that 
	\begin{equation}\label{measurecontrolplane0}
			d_{0,r}(E,Z^r)\leq \tau_{1}
	\end{equation}
	and for any ball $B(y,t)\subset B(0,r)$,
	\begin{equation}\label{measurecontrolplane}
			\left\vert\H^{2}(Z^r\cap B(y,t))-\H^{2}(E\cap B(y,t)) 
			\right\vert\leq \tau_{1} r^{2}.
	\end{equation}

	First, we consider any point $x\in E\cap L_{1}\cap B(0, r_{0}/2)$. 
	By \eqref{measurecontrolplane}, if we take $r=9r_0/10$, we will get that 
	\begin{align*}
		\H^{2}(E\cap B(x,t))&\leq \H^2\left( Z^{9r_0/10}\cap
		B(x,t)\right)+\tau_1 \left( \frac{9r_0}{10} \right)^2\\
		&\leq \frac{\pi}{2}t^2+\tau_{1}\left( \frac{9r_0}{10} \right)^2
	\end{align*}
	from this inequality, by taking $t=r_{0}/3$, we can get that
	\begin{equation}\label{eq:LP1}
		\theta_E\left(x,\frac{r_{0}}{3}\right)< \frac{1}{2}+30\tau_1 <
		\frac{1}{2}+\varepsilon(\tau_2).
	\end{equation}
	By using Theorem 28.7 in \cite{David:2014}, we get that
	\[
	\theta_E(x)\leq \theta_E\left( x,\frac{r_0}{3} \right)e^{\alpha
	A(r_0/3)}<\frac{3}{4},
	\]
	thus
	\begin{equation}\label{eq:LP2}
		\theta_E(x)=\frac{1}{2}.
	\end{equation}

	By Lemma \ref{mainlemma}, we can find minimal sliding cone
	$Z_x^{\rho}$ for any $0<\rho\leq 3r_0/10$ such that 
	\begin{equation}\label{eq:LP3}
			d_{x,\rho}(E,Z_x^{\rho})\leq \tau_2
	\end{equation}
	 and for any ball $B(y,t)\subset B(x,\rho)$,
	\begin{equation}\label{eq:LP3.5}
			\left\vert \H^2(E\cap B(y,t))-\H^2(Z_x^{\rho}\cap B(y,t)) \right\vert\leq
			\tau_2\rho^2.
	\end{equation}

	We now consider any point $z\in E\cap B(0,r_0/10)\setminus L_1$.
	From Lemma \ref{le:BDAP}, we get that 
	\[
	\left\vert\Pi_{L_1}(z)-x\right\vert\leq 8\tau_2\left\vert z-x \right\vert,
	\]
	thus
	\begin{equation}\label{eq:LP5}
		\dist(z,L_1)=\left\vert z-\Pi_{L_1}(z) \right\vert\geq \left\vert z-x
		\right\vert-\left\vert \Pi_{L_1}(z)-x \right\vert\geq
		(1-8\tau_2)\left\vert z-x \right\vert.
	\end{equation}

	We take $r_1=\frac{1}{2}\dist(z,L_1)$,
	and $\rho=\left\vert z-x \right\vert+r_1$, then $\rho<\frac{3r_0}{10}$. We take
	$Z_x^{\rho}$ as in \eqref{eq:LP3}, then $B(z,r_1)\subset B(x,\rho)$, thus
	\[
	\H^2(E\cap B(z,r_1))\leq \H^2(Z_{x}^{\rho}\cap B(z,r))+\tau_2\rho^2\leq \pi
	r_1^2+\tau_2\rho^2,
	\]
	hence 
	\begin{equation}\label{eq:LP4}
		\theta_E(z,r_1)=\frac{\H^{2}(E\cap B(z,r_1))}{\pi r_1^2} \leq
		1+\frac{\tau_2\rho^2}{\pi r_1^2}< 1+10\tau_2<1+\varepsilon(\tau),
	\end{equation}
	but we know that 
	\[
	\theta_E(z)\geq 1,
	\]
	hence by using a monotonicity property, Proposition 5.24 in \cite{David:2009},
	we have that
	\[
	\theta_E(z)\leq\theta_E(z, r_1)\exp(\lambda A( r_1 ))<\frac{3}{2},
	\]
	thus
	\begin{equation}\label{eq:LP5}
		\theta_E(z)=1.
	\end{equation}

	For any $r\in (0, \frac{9}{10}r_1]$,
	we can apply Lemma 16.11 in \cite{David:2009}, 
	there exists a plane $Z(z,r)$ through $z$ such that
	\begin{equation}\label{eq:LP6}
		d_{z,r}(E,Z(z,r))\leq \tau.
	\end{equation}

	For any	$r\in(\frac{9}{10}r_1, \frac{1}{5}r_0]$,
	we put $\rho_r=\left\vert z-x\right\vert+r$, then $\rho_r\leq
	\frac{3}{10}r_0$, and $B(z,r)\subset B(x,\rho_r)$. We take $Z_x^{\rho_r}$ as
	in \eqref{eq:LP3}, then 
	\begin{equation}\label{eq:LP7}
		d_{z,r}(E,Z_x^{\rho_r}) \leq \frac{\rho_r}{r}d_{x,\rho_r}(E,Z(x,\rho_r))
		\leq \frac{\rho_r}{r}\tau_2\leq 5\tau_2.
	\end{equation}
	We do not know whether or not the sliding minimal cone $Z_x^{\rho_r}$ passes
	through the point $z$, but we can do a translation of $Z_x^{\rho_r}$ such that
	it is centered at $\Pi_{L_1}(z)$, we denote it by $Z(z,r)$, i.e.
	$Z(z,r)=Z_x^{\rho_r}-(x-\Pi_{L_1}(z))$. Then $Z(z,r)$ is a sliding minimal
	cone contains $z$, and 
	\begin{equation}\label{eq:LP8}
		d_{z,r}(E,Z(z,r))\leq \frac{\left\vert x-\Pi_{L_1}(z)
		\right\vert}{r}+d_{z,r}(E,Z_x^{\rho_r})<20\tau_2<\tau.
	\end{equation}
	It follows from \eqref{eq:LP6} and \eqref{eq:LP8} that, for any $z\in E\cap
	B(0,r_0/10)\setminus L_1$, for any $r\in (0,r_0/5]$, there exist a cone
	$Z(z,r)$ such that 
	\begin{equation}\label{eq:LP9}
		d_{z,r}(E,Z(z,r))\leq \tau,
	\end{equation}
	where $Z(z,r)$ is a plane when $r$ is small, $Z(x,r)$ is a half plane when $r$
	is large.

	From the inequalities \eqref{eq:LP3.5} and \eqref{eq:LP9}, we get that, for any
	$x\in E\cap B(0,r_0/10)$ and any $r\in (0,r_0/5]$, we can find a cone $Z(x,r)$
	though $x$ such that 
	\[
	d_{x,r}(E,Z(x,r))\leq \tau.
	\]
	where $Z(x,r)$ is a minimal cone when $0<r<\dist(x,L_1)$, and $Z(x,r)$ is a 
	sliding minimal cone of type $\mathbb{P}_{+}$ when $\dist(x,L_1)\leq r\leq
	r_0/5$.

	By Corollary \ref{co:PT}, we can find a biH\"older map 
	$\phi: B(0,3r_0/20)\cap \Omega\to B(0,r_0/5)\cap \Omega$ and a sliding 
	minimal cone $Z_0$ of type $\mathbb{P}_{+}$ such that \eqref{eq:lemmaplane}
	holds with $r=r_0/10$.	
\end{proof}
\begin{lemma}\label{le:PUFP2}
	Let $\Omega$, $L_1$ be as in \eqref{simple}, U an open set with $0\in U$.
	Suppose that $E\subset \Omega$ is $(U, h)$-sliding-almost-minimal. 
	If $\theta_E(0)=\frac{3}{2}$, then for each small
	$\tau>0$, we can find a radius $r>0$, a biH\"older map
	$\phi: B(0,3r/2)\cap \Omega\to B(0,2r)\cap \Omega$ and a sliding minimal
	cone $Z$ of type $\mathbb{P}_{+}$ such that
	\begin{equation}\label{eq:PUFP2}
		\begin{gathered}
			\phi(x)\in L_1\text{ for }x\in L_1,\| f-\id \|_{\infty}\leq \tau,\\
			(1+\tau)^{-1}\left\vert z-y \right\vert^{1+\tau}\leq \left\vert
			\phi(z)-\phi(y) \right\vert\leq (1+\tau)\left\vert z-y
			\right\vert^{\frac{1}{1+\tau}},\\
			B(0,r)\cap\Omega\subset \phi\left( B\left( 0,\frac{3r}{2} \right)\cap
			\Omega \right)\subset B(0,2r)\cap \Omega,\\
			E\cap B(0,r)\subset \phi\left( (Z\cup L_1)\cap B\left( 0,\frac{3r}{2}
			\right)\right)\subset E\cap B(0,2r).
		\end{gathered}
	\end{equation}
\end{lemma}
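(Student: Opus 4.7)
The plan is to reduce the case $\theta_E(0)=3/2$ to the case $\theta=1/2$ that was already handled in Lemma \ref{lemmaplane}, by stripping off the boundary plane $L_1$ from $E$. Since $\theta_E(0)=3/2$, and the list of sliding minimal cones (Theorem \ref{thm:MCCB} and Remark \ref{re:listofminimalcones}) shows that the only sliding minimal cones centered at $0$ with density $3/2$ are those of the form $L_1\cup Z_0$ with $Z_0$ of type $\mathbb{P}_+$, every blow-up limit of $E$ at $0$ has this form. In the setting of the main theorem (and as is implicit here) one has $L_1\subset E$ locally; I will use this below.

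The first step is to set $F:=\overline{E\setminus L_1}$ and invoke Lemma \ref{le:removeboundary}: $F$ is again $(U,h)$-sliding-almost-minimal and $\H^2(F\cap L_1)=0$. From the identity $\theta_E(0,r)=\theta_F(0,r)+1$ (equation \eqref{eq:MWOB3} in the discussion following Lemma \ref{le:removeboundary}) we get $\theta_F(0)=1/2$. So Lemma \ref{lemmaplane} applies to $F$, yielding a radius $r>0$, a sliding minimal cone $Z$ of type $\mathbb{P}_+$, and a biH\"older map $\phi:B(0,3r/2)\cap\Omega\to B(0,2r)\cap\Omega$ that satisfies every clause of \eqref{eq:lemmaplane} for $F$ in place of $E$. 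In particular $\phi(L_1)\subset L_1$, $\|\phi-\id\|_\infty\le\tau$, the Hölder estimates hold, and $F\cap B(0,r)\subset\phi(Z\cap B(0,3r/2))\subset F\cap B(0,2r)$.

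The only extra ingredient needed to promote this from a parameterization of $F$ to a parameterization of $E=L_1\cup F$ by $L_1\cup Z$ is the inclusion
\[
L_1\cap B(0,r)\subset \phi\!\left(L_1\cap B\!\left(0,\tfrac{3r}{2}\right)\right).
\]
This follows from the reflection symmetry of the Reifenberg map built in Corollary \ref{co:PT}: after even extension across $L_1$, the map $\phi$ commutes with the reflection $\sigma$ and is injective (by the lower biHölder bound). Hence $\phi(x)\in L_1$ forces $\phi(\sigma(x))=\sigma(\phi(x))=\phi(x)$, so $\sigma(x)=x$, that is $x\in L_1$. Combined with the surjectivity clause $B(0,r)\cap\Omega\subset\phi(B(0,3r/2)\cap\Omega)$, this gives the required inclusion. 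Writing $E\cap B(0,r)=(L_1\cap B(0,r))\cup(F\cap B(0,r))$ and combining the two inclusions yields
\[
E\cap B(0,r)\subset\phi\!\left((L_1\cup Z)\cap B\!\left(0,\tfrac{3r}{2}\right)\right)\subset E\cap B(0,2r),
\]
since $\phi(L_1)\subset L_1\subset E$ and $\phi(Z\cap B(0,3r/2))\subset F\cap B(0,2r)\subset E\cap B(0,2r)$. The remaining conclusions of \eqref{eq:PUFP2} are inherited verbatim from \eqref{eq:lemmaplane}.

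The main obstacle is not really an obstacle: once the density-stripping device of Lemma \ref{le:removeboundary} and the planar case of Lemma \ref{lemmaplane} are in place, the argument is essentially a bookkeeping reduction. The one point one has to be careful with is the statement $\phi^{-1}(L_1)\cap\Omega=L_1$, which is a property of the Reifenberg parameterization and not of $E$ itself; it must be harvested from the reflection-symmetric form of the theorem used in Corollary \ref{co:PT} rather than from the almost-minimality of $E$.
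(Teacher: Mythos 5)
Your proposal is correct and follows the paper's own route exactly: set $F=\overline{E\setminus L_1}$, use Lemma \ref{le:removeboundary} to see $F$ is still sliding almost minimal with $\theta_F(0)=\frac{1}{2}$, apply Lemma \ref{lemmaplane} to $F$, and reassemble $E=F\cup L_1$ against $Z\cup L_1$. In fact you supply a justification the paper omits (the inclusion $L_1\cap B(0,r)\subset\phi(L_1\cap B(0,3r/2))$ via the $\sigma$-equivariance and injectivity of the Reifenberg map), which is a welcome addition rather than a deviation.
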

\begin{proof}
	We put $F=\overline{E\setminus L_1}$, then $F$ is also
	$(U,\delta,h)$-sliding-almost-minimal. By lemma \ref{lemmaplane}, for each small
	$\tau>0$, we can find $r>0$, a biH\"older map $\phi: B(0,3r/2)\cap\Omega\to
	B(0,2r)\cap\Omega$ and a sliding minimal cone $Z$ of type $\mathbb{P}_{+}$ 
	such that
	\begin{equation}\label{eq:PUFP3}
		\begin{gathered}
			\phi(x)\in L_1\text{ for }x\in L_1\text{ and } \| f-\id \|_{\infty}\leq
			\tau,\text{ and}\\
			(1+\tau)^{-1}\left\vert x-y \right\vert^{1+\tau}\leq \left\vert
			\phi(x)-\phi(y) \right\vert\leq (1+\tau)\left\vert x-y
			\right\vert^{\frac{1}{1+\tau}},\\
			B(0,r)\cap\Omega\subset \phi\left( B\left( 0,\frac{3r}{2} \right)\cap
			\Omega \right)\subset B(0,2r)\cap \Omega,\\
			F\cap B(0,r)\subset \phi\left(Z\cap B\left( 0,\frac{3r}{2}
			\right)\right)\subset F\cap B(0,2r).
		\end{gathered}
	\end{equation}
	Thus 
	\[
	E\cap B(0,r)\subset\phi\left( (Z\cup L_1)\cap B\left( 0,\frac{3r}{2} \right)
	\right)\subset E\cap B(0,2r).
	\]
\end{proof}
\begin{remark}\label{re:plane}
	Suppose that $\Omega$, $L_1$ and $U$ are as in Lemma \ref{lemmaplane}, and
	that $E\subset \Omega$ is a $(U,h)$-sliding-almost-minimal set with
	$\theta_E(0)=\frac{1}{2}$ or $\frac{3}{2}$. If $\tau\in (0,1)$ is small
	enough, we can find $\varepsilon'(\tau)>0$ such that when the radius $r>0$ is 
	such that 
	\[
	B(0,10r)\subset U, h(20r)\leq \varepsilon'(\tau),
	\int_{0}^{20r}\frac{h(t)}{t}dt\leq\varepsilon'(\tau),\theta_E(0,10r)\leq
	\theta_E(0)+\varepsilon'(\tau)
	\]
	then for any $x\in E\cap B(0,r)$ and any $0<t\leq 2r$, we can find a cone or
	sliding minimal cone $Z(x,t)$ that depends on $t$ such that 
	\[
	d_{x,t}(E,Z(x,t))\leq \tau,
	\]
	where $Z(x,t)$ is a minimal cone when $0<t<\dist(x,L_1)$, and $Z(x,t)$ is a 
	sliding minimal cone when $\dist(x,L_1)\leq t\leq 2r$.
\end{remark}
Indeed, when we look at the proof of Lemma \ref{lemmaplane}, we let
$\tau\in (0,1)$ be such that 
\[
	\left( \frac{1}{2}+\varepsilon(\tau)
	\right)e^{(\lambda+\alpha)\varepsilon(\tau)}<\frac{3}{4}.
	\]
	Then we take 
	\[
	\tau_1=\min\left\{\frac{\tau}{10^4},
	\frac{1}{100}\varepsilon\left(\frac{\tau}{100}\right)\right\},
	\]
	and let $\varepsilon(\tau_1)$ be as in Lemma \ref{mainlemma}. Finally, 
$\varepsilon'(\tau)=\varepsilon(\tau_1)$ will be what we desire.

\begin{lemma}\label{lemmay}
	Let $\Omega$, $L_1$ be as in \eqref{simple}, $U$ an open set with $0\in U$.
	Suppose that $F\subset \Omega$ is an $(U,h)$-sliding-almost-minimal set.
	If $\theta_F(0)=\frac{3}{4}$, then for each small $\tau>0$, we can 
	find a radius $r>0$, a biH\"older map $\phi: B(0,3r/2)\cap \Omega\to
	B(0,2r)\cap\Omega$ and a sliding minimal cone of type $\mathbb{Y}_{+}$ such that
	\begin{equation}\label{eq:lemmay}
		\begin{gathered}
			\phi(x)\in L_1\text{ for }x\in L_1, \| \phi-\id \|_{\infty}\leq \tau,\\
			(1+\tau)^{-1}\left\vert z-y \right\vert^{1+\tau}\leq \left\vert
			\phi(z)-\phi(y) \right\vert\leq (1+\tau)\left\vert z-y
			\right\vert^{\frac{1}{1+\tau}},\\
			B(0,r)\cap\Omega\subset \phi\left( B\left( 0,\frac{3r}{2} \right)\cap
			\Omega \right)\subset B(0,2r)\cap \Omega,\\
			E\cap B(0,r)\subset \phi\left(Z\cap B\left( 0,\frac{3r}{2}
			\right)\right)\subset E\cap B(0,2r).
		\end{gathered}
	\end{equation}
\end{lemma}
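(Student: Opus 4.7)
The plan is to follow the blueprint of Lemma~\ref{lemmaplane} almost verbatim, with the role of $\mathbb{P}_{+}$ replaced by $\mathbb{Y}_{+}$, and to handle the extra complication coming from interior points that may be near the spine of the approximating $\mathbb{Y}_{+}$~cone.

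First, I would apply Lemma~\ref{mainlemma} at the origin. By assumption $\theta_F(0)=\tfrac34$, and choosing $\tau_1\ll\tau_2\ll\tau$ with $\varepsilon(\tau_i)$ small enough so that $(\tfrac34+\varepsilon(\tau_1))e^{\alpha A(r_0)}<\tfrac{d_{T_+}}{1}$, I can find $r_0>0$ such that for every $r\in(0,9r_0/10]$ there is a sliding minimal cone $Z^r$ with $d_{0,r}(F,Z^r)\leq\tau_1$ and the corresponding measure comparison. Since $\theta_F(0)=\tfrac34$, the density constraint forces $Z^r$ to be of type $\mathbb{Y}_{+}$. Next, for any $x\in F\cap L_1\cap B(0,r_0/2)$, the measure comparison at scale $9r_0/10$ together with the almost monotonicity (Theorem~28.7 in~\cite{David:2014}) pins down $\theta_F(x)\in\{\tfrac12,\tfrac34\}$, and a second application of Lemma~\ref{mainlemma} produces, for all $0<\rho\le 3r_0/10$, a sliding minimal cone $Z_x^\rho$ (of type $\mathbb{P}_{+}$ or $\mathbb{Y}_{+}$) with $d_{x,\rho}(F,Z_x^\rho)\leq\tau_2$.

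For an interior point $z\in F\cap B(0,r_0/10)\setminus L_1$, I would use Lemma~\ref{le:BDAP} to produce $a\in F\cap L_1\cap B(0,3r_0/5)$ with $|\Pi_{L_1}(z)-a|\le 8\tau_2|z-a|$, which in particular gives $\dist(z,L_1)\ge (1-8\tau_2)|z-a|$. Setting $r_1=\tfrac12\dist(z,L_1)$, the comparison inequality inside $B(a,|z-a|+r_1)$ yields $\theta_F(z,r_1)\le \tfrac32+O(\tau_2)$. Combined with almost monotonicity, this forces $\theta_F(z)\in\{1,\tfrac32\}$, so $z$ is an interior Almgren almost minimal point of density $1$ or $\tfrac32$. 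Then Lemma~16.11 in~\cite{David:2009} (or its $\mathbb{Y}$-variant) provides, at scales $r\in(0,9r_1/10]$, a minimal cone $Z(z,r)$ (plane or $\mathbb{Y}$) through $z$ with $d_{z,r}(F,Z(z,r))\le\tau$. At larger scales $r\in(9r_1/10,r_0/5]$, I would recycle $Z_a^{\rho_r}$ with $\rho_r=|z-a|+r$, translate it so that its center lands at $\Pi_{L_1}(z)$ (as in~\eqref{eq:LP8}), and obtain $d_{z,r}(F,\widetilde Z(z,r))\le O(\tau_2)<\tau$ where $\widetilde Z(z,r)$ is a plane or a cone of type $\mathbb{Y}$ according to whether $Z_a^{\rho_r}$ is of type $\mathbb{P}_{+}$ or $\mathbb{Y}_{+}$.

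With these uniform approximations in hand, at every $x\in F\cap B(0,r_0/10)$ and every scale $t\in(0,r_0/5]$ I have a cone $Z(x,t)$ (minimal in $\mathbb{R}^3$ when $t<\dist(x,L_1)$, sliding minimal otherwise), of the three types $\mathbb{P}$, $\mathbb{Y}$, $\mathbb{P}_{+}$, or $\mathbb{Y}_{+}$, with $d_{x,t}(F,Z(x,t))\le\tau$, and these cones fit the hypotheses of Corollary~\ref{co:PT} (the angle parameter $\tau_0$ is controlled because the only relevant cones come from the finite list in Remark~\ref{re:listofminimalcones}). Applying Corollary~\ref{co:PT} with $r=r_0/10$ produces the sliding minimal cone $Z$ of type $\mathbb{Y}_{+}$ and the biHölder map $\phi$ satisfying~\eqref{eq:lemmay}. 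The main obstacle, as in Lemma~\ref{lemmaplane}, is tracking the scales for interior points: one must carefully switch between the interior Almgren comparison (which gives $\mathbb{P}$ or $\mathbb{Y}$) at small scales and the sliding comparison based at a boundary reference point at larger scales, and verify that the translation used to recenter the large-scale cone only perturbs the Hausdorff distance by $O(\tau_2)$ — this is essentially the argument already carried out in~\eqref{eq:LP7}--\eqref{eq:LP9}, transported to the $\mathbb{Y}_{+}$ setting.
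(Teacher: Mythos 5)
Your overall strategy is the paper's: produce an approximating (sliding) minimal cone at every point of $F$ near $0$ and every scale, then feed this into Corollary~\ref{co:PT}. Two of your intermediate claims, however, do not hold as stated, and both come from the fact that the $\mathbb{Y}_{+}$ setting carries an extra characteristic length scale that is absent from Lemma~\ref{lemmaplane}. First, the assertion that ``a second application of Lemma~\ref{mainlemma} produces, for all $0<\rho\le 3r_0/10$, a sliding minimal cone $Z_x^{\rho}$'' at a boundary point $x\neq 0$ is not available: Lemma~\ref{mainlemma} requires $\theta_F(x,r)\le\theta_F(x)+\varepsilon(\tau)$, and since $\theta_F(x)=\frac12$ while $\theta_F(0)=\frac34$, this hypothesis fails once $\rho$ is comparable to $|x|$. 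The paper only gets $\mathbb{P}_{+}$ cones centered at $x$ for $\rho\le 9|x|/10$, and for $9|x|/10\le\rho\le 3r_0/10$ it translates the origin-centered $\mathbb{Y}_{+}$ cone $Z^{|x|+\rho}$ so that it passes through $x$, as in \eqref{eq:LY6}. You perform the analogous translation for interior points but omit it at boundary points, where it is also needed before Lemma~\ref{le:BDAP} can even be invoked (its hypothesis asks for cones at all boundary points and all scales up to $3r/5$).

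The more serious gap is in the interior case. Your bound $\theta_F(z,r_1)\le\frac32+O(\tau_2)$ with $r_1=\frac12\dist(z,L_1)$ only excludes densities at least $d_T$; it does not give $\theta_F(z,r_1)\le 1+\varepsilon$, so for a point of density $1$ you cannot apply Lemma~16.11 of \cite{David:2009} at scales up to $9r_1/10$, and the conclusion you draw from it is in fact false there: a density-$1$ point $z$ may lie at distance $\ell\ll\dist(z,L_1)$ from the set $E_Y$ of density-$\frac32$ points (the spine of the $\mathbb{Y}$-structure), and at scales between $\ell$ and $\dist(z,L_1)$ no plane through $z$ is $\tau$-close to $F$. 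This regime simply does not occur in Lemma~\ref{lemmaplane}, so ``transporting \eqref{eq:LP7}--\eqref{eq:LP9}'' does not cover it. The paper's proof splits interior points according to whether $\dist(z,L_1)<|z|/3$ (where the nearby boundary cone is of type $\mathbb{P}_{+}$, one gets the stronger bound $\theta_F(z,r_2)\le 1+O(\tau_2)$, and planes do work at small scales) or $\dist(z,L_1)\ge|z|/3$; in the latter case it further splits by $\theta_F(z)\in\{1,\frac32\}$, and for density-$1$ points it introduces $\ell_Y(z)=\dist(z,E_Y)$ and runs the argument of Lemma~16.25 in \cite{David:2009}: planes below $\ell_Y(z)/3$, translated $\mathbb{Y}$ cones attached to a nearby point of $E_Y$ at intermediate scales, and translated sliding cones at the largest scales. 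This three-regime bookkeeping is the essential new content of the lemma and is missing from your proposal.
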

\begin{proof}
	As in Lemma \ref{lemmaplane}, we can assume $U$ is an open ball $B(0,R)$ 
	for some $R>0$. Let $\tau>0$ be a positive number, and let
	$\varepsilon(\tau)$ be as in Lemma \ref{mainlemma}; we suppose $\tau$
	small enough so that
	\[
	(1+\varepsilon(\tau))e^{(\lambda+\alpha)\varepsilon(\tau)}<\frac{3}{2},
	\]
	where $\lambda$ is taken as in Proposition 5.24 in \cite{David:2009}, and
	$\alpha$ is taken as in Theorem 28.7 in \cite{David:2014}.
	Let $\tau_2>0$ and $\varepsilon(\tau_2)$ be as in Lemma \ref{mainlemma} 
	so that $100\tau_2\leq \tau $ and
	\[
	\left( \frac{1}{2}+\varepsilon(\tau_2)
	\right)e^{\alpha \varepsilon(\tau)}<\frac{3}{4},\ \left(
	\frac{3}{2}+\varepsilon(\tau_2) \right)e^{\lambda\varepsilon(\tau)}<d_T,
	\]
	where $d_T$ is the constant which is considered in Lemma 14.12 in
	\cite{David:2009}. We take $0<\tau_1\leq\min\{\tau_2,\varepsilon(\tau_2)\}/100$.
	Let $\tau_1$ and $\varepsilon(\tau_1)$ be as in Lemma \ref{mainlemma}. We
	suppose that $\varepsilon(\tau_1)<\varepsilon(\tau_2)<\varepsilon(\tau)$.

	As in the proof of Lemma \ref{mainlemma}, we put	$F=\overline{E\setminus L_1}$,
	then $F$ is also $(U,\delta,h)$-sliding-almost-minimal.

	By Theorem 28.7 in \cite{David:2014}, there exist $0<r_0<R$ such that 
	\[
	h(2r_0) \leq \varepsilon(\tau_1),\ A(r_0)<\varepsilon(\tau_1)
	\]
	and 
	\[
	\theta_F(0,r_{0})\leq \frac{3}{4}+\varepsilon(\tau_1),
	\]
	where 
	\[
	A(r)=\int_{0}^{2r}\frac{h(t)}{t}dt.
	\]
	By using Lemma \ref{mainlemma}, for any $\rho\in (0,9r_{0}/10]$ there exists a
	minimal cone $Z^{\rho}$ of type $\mathbb{Y}_{+}$ center at $0$ such that 
	\begin{equation}\label{eq:LY1}
		\begin{gathered}
			d_{0,\rho}(F,Z^{\rho})\leq \tau_{1},\\
			\left\vert\H^{2}(Z^{\rho}\cap B(y,t))-\H^{2}(E\cap B(y,t)) \right\vert\leq
			\tau_{1} \rho^{2},\\
			\text{ for any ball }B(y,t)\subset B(0,\rho).
		\end{gathered}
	\end{equation}

	First, for any $x\in F\cap L_{1}\cap B(0, r_{0}/2)\setminus \{ 0 \}$,
	we take $\rho=2\left\vert x \right\vert$ and $t=\left\vert x \right\vert$, 
	then by \eqref{eq:LY1}, we have 
	\begin{align*}
		\H^{2}(F\cap B(x,t))&\leq \H^2\left( Z^{\rho}\cap
		B(x,t)\right)+\tau_1 \rho^2\\
		&\leq \frac{\pi}{2}t^2+\tau_{1}\rho^2
	\end{align*}
	from this inequality, we can get that
	\begin{equation}\label{eq:LY3}
		\theta_F\left( x,\left\vert x \right\vert\right)=\frac{\H^{2}(E\cap 
		B( x,\left\vert x \right\vert))}{\pi\left\vert x \right\vert^2 }\leq
		\frac{1}{2}+4\tau_1<\frac{1}{2}+\varepsilon(\tau_2)
	\end{equation}
	Applying Theorem 28.7 in \cite{David:2014}, we get that
	\[
	\theta_F(x)\leq \theta_F(x,\left\vert x \right\vert)e^{\alpha
	A(\left\vert x \right\vert)}<\frac{3}{4},
	\]
	thus 
	\[
	\theta_F(x)=\frac{1}{2}.
	\]

	Taking $r_1=\left\vert x \right\vert$,
	we get from \eqref{eq:LY3} that
	\[
	\theta_{F}(x,r_1)\leq \theta_F(x)+\varepsilon(\tau_2).
	\]
	By Lemma \ref{mainlemma}, for any $0<\rho\leq 9r_1/10$, there exists
	a sliding minimal cone $Z_{x}^{\rho}$ centered at $x$ of type 
	$\mathbb{P}_{+}$ such that 
	\begin{equation}\label{eq:LY4}
		d_{x,r}(F,Z_x^{\rho})\leq \tau_2,
	\end{equation}
	and for any ball $B(y,t)\subset B(x,r)$,
	\begin{equation}\label{eq:LY5}
		\left\vert \H^2(F\cap B(y,t))-\H^2(Z_x^{\rho}\cap B(y,t)) \right\vert \leq
		\tau_2 \rho^2.
	\end{equation}
	For $9r_1/10\leq \rho \leq 3r_0/10$, we have that
	\[
	d_{x,\rho}(F,Z^{r_1+\rho})\leq \frac{r_1+\rho}{\rho}d_{0,r_1+\rho}(F,Z^{r_1+\rho})
	\leq \frac{19}{9}\tau_1.
	\]
	Since $d_{0,r_1+\rho}(F, Z^{r_1+\rho})\leq \tau_1$, there exists a point
	$x'\in Z^{r_1+\rho}\cap L_1$ such that $\left\vert x-x' \right\vert\leq
	(r_1+\rho)\tau_1$. We take $Z(x,\rho)=Z^{r_1+\rho}+x-x'$, that is a 
	translation of $Z^{r_1+\rho}$; it is a siliding minimal through the point $x$,
	and
	\begin{equation}\label{eq:LY6}
		d_{x,\rho}(F,Z(x,\rho))\leq \frac{\left\vert x-x' \right\vert}{\rho}+
		d_{x,\rho}(F,Z^{r_1+\rho})< 5\tau_1<\tau_2.
	\end{equation}

	It follows from \eqref{eq:LY4} and \eqref{eq:LY6} that, for any 
	$x\in F\cap L_1\cap B(0, r_0/2)$, and any $0<\rho<3r_0/10$, there exists a
	sliding minimal cone $Z(x,\rho)$ centered at $x$, either of type 
	$\mathbb{P}_{+}$ or of type $\mathbb{Y}_{+}$, such that 
	\begin{equation}\label{eq:LY7}
		d_{x,\rho}(F,Z(x,\rho))\leq \tau_2.
	\end{equation}

	Next, We consider $z\in (F\setminus L_1)\cap B(0,r_0/10)$. If 
	$\dist(z,L_1)<\frac{1}{3}\left\vert z \right\vert$,
	we take a point $a\in F\cap L_1\cap B(0,\frac{r_0}{5})$ such that 
	\[
	\left\vert\Pi_{L_1}(z)-a\right\vert\leq 8\tau_2\left\vert z-a \right\vert.
	\]
	We take $r_2=\frac{1}{2}\dist(x,L_1)$ and $\rho=\left\vert z-a
	\right\vert+r_2$, then
	\[
	\left\vert z-a \right\vert\leq \left\vert z-\Pi_{L_1}(z)
	\right\vert+\left\vert \Pi_{L_1}(z)-a \right\vert\leq
	2r_2+8\tau_2\left\vert z-a \right\vert,
	\]
	thus
	\[
	\left\vert z-a \right\vert\leq \frac{2}{1-8\tau_2}r_2,
	\]
	and 
	\[
	\rho\leq \left( 1+\frac{2}{1-8\tau_2} \right)r_2\leq \frac{75}{23}r_2.
	\]
	Since $2r_2=\dist(z,L_1)\leq \frac{1}{3}\left\vert z \right\vert$, we have
	that 
	\[
	\left\vert\Pi_{L_1}(z)\right\vert\geq 2\sqrt{2}\left\vert z-\Pi_{L_1}(z)
	\right\vert\geq 4\sqrt{2}r_2,
	\]
	thus
	\[
	\left\vert a \right\vert\geq \left\vert \Pi_{L_1}(z)
	\right\vert-\left\vert \Pi_{L_1}(z)-a \right\vert\geq
	4\sqrt{2}r_2-\frac{16\tau_2}{1-8\tau_2}r_2>4r_2.
	\]
	Hence 
	\[
	\rho\leq \frac{9}{10}\left\vert a \right\vert.
	\]
	Consider the sliding minimal cone $Z_a^{\rho}$ as in \eqref{eq:LY4}; it is a
	minimal cone centered at point $a$ of type $\mathbb{P}_{+}$. Since
	$B(z,r_2)\subset B(a,\rho)$, we deduce from \eqref{eq:LY5} that
	\begin{align*}
		\H^2 \left(F\cap B(z,r_2 )\right)&\leq \H^2
		\left(Z_{a}^{\rho} \cap B(z, r_2)\right)+\tau_2 \rho^2\\
		&\leq \pi r_2^2+\tau_2 \rho^2,
	\end{align*}
	thus 
	\begin{equation}\label{eq:LY8}
		\theta_F\left(z,r_2\right)\leq
		1+\left(\frac{73}{23}\right)^2\frac{\tau_2}{\pi}\leq 1+ 4\tau_2
		\leq 1+\varepsilon(\tau).
	\end{equation}
	Applying Proposition 5.24 in \cite{David:2009}, we can get that
	\[
	\theta_F(z)\leq \theta_F(z,r_2)e^{\lambda A(r_2)}<\frac{3}{2},
	\]
	thus
	\[
	\theta_F(z)= 1.
	\]
	By Lemma 16.11 in \cite{David:2009}, for any $\rho\in (0,9r_2/10]$, there
	exist a plane $Z(z,\rho)$ through $z$ such that 
	\begin{equation}\label{eq:LY9}
		d_{z,\rho}(F,Z(z,\rho))\leq \tau.
	\end{equation}
	For $\rho\in (9r_2/10,r_0/10]$, we put $r_{\rho}=\left\vert z-a
	\right\vert+\rho$, then 
	\[
	r_{\rho}\leq \frac{r_0}{5}+\frac{r_0}{10}\leq \frac{3r_0}{10}.
	\]
	Consider the sliding minimal cone $Z(a,r_{\rho})$ as in \eqref{eq:LY7}, 
	we can get that 
	\[
	d_{z,\rho}(F,Z(a,r_{\rho})\leq \frac{r_{\rho}}{\rho}d_{a,r}(F,Z(a,r_{\rho}))
	\leq \left( 1+\frac{\left\vert z-a \right\vert}{\rho} \right)\tau_2\leq
	\frac{7}{2}\tau_2.
	\]
	We now take 
	\[
	Z(z,\rho)=Z(a,r_{\rho})+\Pi_{L_1}(z)-a.
	\]
	It is a sliding minimal cone centered at $\Pi_{L_1}(z)$, thus through the point
	$z$, which satisfy that 
	\begin{equation}\label{eq:LY10}
		d_{z,\rho}(F,Z(z,\rho))\leq \frac{\left\vert \Pi_{L_1}(z)-a
		\right\vert}{\rho}+d_{z,\rho}(F,Z(a,r_{\rho}))<4\tau_2.
	\end{equation}
	It follows from \eqref{eq:LY9} and \eqref{eq:LY10} that, in the case 
	$z\in B(0,r_0/10)\cap F\setminus L_1$ and
	$\dist(z)<\left\vert z \right\vert/3$, for $0<\rho\leq \frac{r_0}{10}$, we can
	find a cone $Z(z,\rho)$ such that 
	\begin{equation}\label{eq:LY18}
		d_{z,\rho}(F,Z(z,\rho))\leq \tau,
	\end{equation}
	where $Z(z,\rho)$ is a minimal cone when $\rho$ is small, and $Z(z,\rho)$ is a
	sliding minimal cone when $\rho$ is large.

	We now consider the case when $z\in (F\setminus L_1)\cap B(0,r_0/10)$ with 
	$\dist(z,L_1)\geq \frac{1}{3}\left\vert z \right\vert$. We take
	$r_3=\dist(z,L_1)$, and put $\rho_3=\left\vert z \right\vert+r_3$, then
	$\rho_3\leq 4r_3<\frac{4r_0}{10}$. We take $Z^{\rho_3}$ a minimal
	cone as in \eqref{eq:LY1}, then we can get that 
	\[
	\H^2(F\cap B(z,r_3))\leq \H^2(Z^{\rho_3}\cap B(z,r_3))+\tau_1\rho_3^2\leq
	\frac{3}{2}\pi r_3^2+\tau_1\rho_3^2,
	\]
	thus 
	\begin{equation}\label{eq:LY11}
		\theta_F(z,r_3)\leq
		\frac{3}{2}+\frac{16}{\pi}\tau_1<\frac{3}{2}+\varepsilon(\tau_2).
	\end{equation}
	Applying Proposition 5.24 in \cite{David:2009}, we get that 
	\[
	\theta_F(z)\leq \theta_F(z,r_3)e^{\lambda A(r_3)}<d_T,
	\]
	thus $\theta_F(z)=1$ or $\theta_F(z)=\frac{3}{2}$.

	Case 1. If $\theta(z)=\frac{3}{2}$, then for any $0<\rho\leq
	\frac{9}{10}r_3$, by using Lemma 16.11 in \cite{David:2009}, there exists a 
	minimal cone $Z(z,\rho)$ centered at $z$ of type $\mathbb{Y}$ such that 
	\begin{equation}\label{eq:LY12}
		d_{x,\rho}(E, Z(x,\rho))\leq\tau_2
	\end{equation}
	and for any ball $B(y,t)\subset B(x,\rho)$
	\begin{equation}
		\left\vert\H^2(E\cap B(y,t))-\H^2(Z(x,\rho)\cap B(y,t)) \right\vert \leq
		\tau_2 r_3^2.
	\end{equation}

	For any $\rho \in (\frac{9}{10}r_3, \frac{4r_0}{5}]$, we put
	$r_{\rho}=\left\vert z \right\vert+\rho$, then $r_{\rho}\leq \frac{9r_0}{10}$,
	and  $\left\vert z\right\vert\leq 3\dist(z,L_1)=6r_3$, thus $r_{\rho}<8r\rho$.
	Let $Z^{r_\rho}$ be the sliding minimal cone which is considered in 
	\eqref{eq:LY1}, then we can get that 
	\[
	d_{z,\rho}(F,Z^{r_{\rho}})\leq
	\frac{r_{\rho}}{\rho}d_{0,r_{\rho}}(F,Z^{r_{\rho}})
	\leq \frac{r_{\rho}}{\rho}\tau_1.
	\]
	We take a point $z'\in Z^{r_{\rho}}$ such that $\left\vert z-z'
	\right\vert\leq r_{\rho}\tau_1$, and take
	$Z(z,\rho)=Z^{r_{\rho}}+\Pi_{L_1}(z)-\Pi_{L_1}(z')$, which through point $z$,
	we obtain that 
	\begin{equation}\label{eq:LY13}
		d_{z,\rho}(F,Z(z,\rho))\leq \frac{\left\vert
		\Pi_{L_1}(z)-\Pi_{L_1}(z') \right\vert}{\rho}+d_{z,\rho}(F,Z^{r_{\rho}})
		\leq \frac{2r_{\rho}}{\rho}\tau_1\leq 16\tau_1.
	\end{equation}
	It follows from \eqref{eq:LY12} and \eqref{eq:LY13} that, when 
	$z\in B(0,r_0/10)\cap F\setminus L_1$ and
	$\dist(z,L_1)\geq \left\vert z \right\vert/3$ with $\theta_F(z)=\frac{3}{2}$, 
	for any $\rho\in (0,4r_0/5]$, we can find a cone $Z(z,\rho)$ such that 
	\begin{equation}\label{eq:LY14}
		d_{z,\rho}(F,Z(z,\rho))\leq \tau_2,
	\end{equation}
	where $Z(z,\rho)$ is a minimal cone when $\rho$ small, and $Z(z,\rho)$ is a 
	sliding minimal cone with boundary $L_1$ when $\rho$ large.

	Case 2. If $\theta_F(z)=1$. We set 
	\[
	E_{Y}=\{ 0 \}\cup \left\{ x\in F\midd \theta(x)=\frac{3}{2} \right\},
	\]
	and denote $\ell_{Y}(z)=\dist(z,E_{Y})$. 
	Using the same argument as in Lemma 16.25 in \cite[in page 205]{David:2009},
	we get that for $\rho\in (0,\ell_Y(z)/3]$, there is a plane
	$Z(z,\rho)$ through $x$ such that 
	\begin{equation}\label{eq:LY15}
		d_{z,\rho}(F,Z(z,\rho))\leq \tau.
	\end{equation}
	For $\rho\in (\ell_{Y}(z)/3, r_0/10]$, we take a point $x\in E_Y$ such that 
	\[
	\left\vert z-x \right\vert\leq \frac{11}{10}\ell_Y(z),
	\]
	and consider the cone $Z(x,r_{\rho})$ as in \eqref{eq:LY14}, where
	$r_{\rho}=\left\vert z-x \right\vert+\rho$. We can get that 
	\[
	d_{z,\rho}(F,Z(x,r_{\rho}))\leq
	\frac{r_{\rho}}{\rho}d_{x,r_{\rho}}(F,Z(x,r_{\rho}))\leq
	\frac{r_{\rho}}{\rho}\tau_2.
	\]
	By a similar argument as before, we can find a cone $Z(z,\rho)$ which is a
	translation of $Z(x,r_{\rho})$ such that 
	\begin{equation}\label{eq:LY16}
		d_{z,\rho}(F,Z(z,\rho))\leq \frac{2r_{\rho}}{\rho}\tau_2<10\tau_2.
	\end{equation}
	It follows that, when $z\in B(0,r_0/10)\cap F\setminus L_1$ and
	$\dist(z,L_1)\geq \left\vert z \right\vert/3$ with $\theta_F(z)=1$, for any
	$\rho\in (0,r_0/10]$, we can find a cone $Z(z,\rho)$ such that 
	\begin{equation}\label{eq:LY17}
		d_{z,\rho}(F,Z(z,\rho))\leq \tau,
	\end{equation}
	where $Z(z,\rho)$ is a minimal cone when $\rho$ is small, and $Z(z,\rho)$ is a 
	sliding minimal cone when $\rho$ is large.

	From inequalyties \eqref{eq:LY7}, \eqref{eq:LY18}, \eqref{eq:LY14} and
	\eqref{eq:LY17}, we can say that for any $z\in B(0,r_0/10)$, and for any
	$\rho\in (0,r_0/10]$, there is a cone $Z(z,\rho)$ such that 
	\[
	d_{z,\rho}(F,Z(z,\rho))\leq \tau,
	\]
	where $Z(z,\rho)$ is a minimal cone when $\rho<\dist(z,L_1)$, and
	$Z(z,\rho)$ is a sliding minimal cone when $\rho\geq \dist(z,L_1)$.

	By Corollary \ref{co:PT}, we can get our desired result.
\end{proof}
\begin{lemma}\label{lemmay2}
	Let $\Omega$, $L_1$ be as in \eqref{simple}, $U$ an open set with $0\in U$.
	Suppose that $E\subset \Omega$ is an $(U,h)$-sliding-almost-minimal 
	set. If $\theta_E(0)=\frac{7}{4}$, then for each small $\tau>0$, we can 
	find a radius $r>0$, a biH\"older map $\phi: B(0,3r/2)\cap\Omega\to
	B(0,2r)\cap\Omega$ and a sliding minimal cone $Z$ of type $\mathbb{Y}_{+}$
	such that
	\begin{equation}\label{eq:lemmay2}
		\begin{gathered}
			\phi(x)\in L_1\text{ for }x\in L_1,\| f-\id \|_{\infty}\leq \tau,\\
			(1+\tau)^{-1}\left\vert z-y \right\vert^{1+\tau}\leq \left\vert
			\phi(z)-\phi(y) \right\vert\leq (1+\tau)\left\vert z-y
			\right\vert^{\frac{1}{1+\tau}},\\
			B(0,r)\cap\Omega\subset \phi\left( B\left( 0,\frac{3r}{2} \right)\cap
			\Omega \right)\subset B(0,2r)\cap \Omega,\\
			E\cap B(0,r)\subset \phi\left( (Z\cup L_1)\cap B\left( 0,\frac{3r}{2}
			\right)\right)\subset E\cap B(0,2r).
		\end{gathered}
	\end{equation}
\end{lemma}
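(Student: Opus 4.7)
The strategy is parallel to the proof of Lemma \ref{le:PUFP2}, but using Lemma \ref{lemmay} in place of Lemma \ref{lemmaplane}. The density value $\theta_E(0) = 7/4$ occurs only for sliding almost minimal sets that contain $L_1$ locally: by Theorem \ref{thm:MCCB} and the density list in Remark \ref{re:listofminimalcones}, this value can only come from a blow-up cone of the form $L_1 \cup Z'$ with $Z'$ of type $\mathbb{Y}_{+}$. We are therefore in the setting of Lemma \ref{le:removeboundary}. Set $F = \overline{E \setminus L_1}$; that lemma gives $\H^2(F \cap L_1) = 0$ and asserts that $F$ is $(U, h)$-sliding-almost-minimal. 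The identity $\theta_F(0) = \theta_E(0) - 1 = 3/4$, from the discussion following Lemma \ref{le:removeboundary} and equation \eqref{eq:MWOB3}, then places $F$ in the setting of Lemma \ref{lemmay}.

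Applying Lemma \ref{lemmay} to $F$ with our prescribed $\tau$ produces a radius $r > 0$, a biH\"older map $\phi \colon B(0, 3r/2) \cap \Omega \to B(0, 2r) \cap \Omega$, and a sliding minimal cone $Z$ of type $\mathbb{Y}_{+}$ satisfying all the conclusions of \eqref{eq:lemmay} with $F$ in place of $E$. All assertions of \eqref{eq:lemmay2} that involve only $\phi$ (the bound $\| \phi - \id \|_\infty \leq \tau$, the fact $\phi(L_1) \subset L_1$, the biH\"older estimates, and the ball inclusions) are inherited unchanged.

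The remaining task is to upgrade the inclusion involving $F$ to one involving $E$, namely
\[
E \cap B(0, r) \subset \phi\bigl((Z \cup L_1) \cap B(0, 3r/2)\bigr) \subset E \cap B(0, 2r).
\]
For the upper inclusion, combine $\phi(Z \cap B(0, 3r/2)) \subset F \cap B(0, 2r) \subset E$ with $\phi(L_1 \cap B(0, 3r/2)) \subset L_1 \subset E$. For the lower inclusion, write $E \cap B(0, r) = (F \cap B(0, r)) \cup (L_1 \cap B(0, r))$; the first piece lies in $\phi(Z \cap B(0, 3r/2))$ by Lemma \ref{lemmay}. For the second, the biH\"older lower bound makes $\phi$ injective on $B(0, 3r/2) \cap \Omega$, so combined with $\phi(L_1) \subset L_1$ and the openness of $\phi$, no point of $\Omega \setminus L_1$ can map to $L_1$; together with $L_1 \cap B(0, r) \subset \phi(B(0, 3r/2) \cap \Omega)$ this yields $L_1 \cap B(0, r) \subset \phi(L_1 \cap B(0, 3r/2))$, as required.

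I do not expect a substantive obstacle: the real work is already performed in Lemmas \ref{le:removeboundary} and \ref{lemmay}, and the present lemma reduces to their combination followed by the set-theoretic verification above. The only small subtlety is the last injectivity argument showing that $\phi$ sends $L_1 \cap B(0, 3r/2)$ onto a set covering $L_1 \cap B(0, r)$, but this is routine given the biH\"older bounds and the boundary-preserving property of $\phi$.
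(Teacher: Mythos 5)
Your proposal is correct and follows essentially the same route as the paper: set $F=\overline{E\setminus L_1}$, invoke Lemma \ref{le:removeboundary} to get that $F$ is sliding almost minimal with $\theta_F(0)=\theta_E(0)-1=\frac{3}{4}$, apply Lemma \ref{lemmay} to $F$, and then pass back from $F$ to $E=F\cup L_1$ in the final inclusions. The paper compresses that last set-theoretic step into a single ``Thus,'' so your explicit verification (including the injectivity/boundary-preservation argument for $L_1\cap B(0,r)\subset\phi(L_1\cap B(0,3r/2))$) is only a more careful writing of the same argument, not a different one.
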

\begin{proof}
	We put $F=\overline{E\setminus L_1}$, then $F$ is also
	$(U,\delta,h)$-sliding-almost-minimal. By lemma \ref{lemmay}, for each small
	$\tau>0$, we can find $r>0$, a biH\"older map $\phi: B(0,3r/2)\cap\Omega\to
	B(0,2r)\cap\Omega$ and a sliding minimal cone $Z$ of type $\mathbb{Y}_{+}$ 
	such that
	\begin{equation}\label{eq:lemmay3}
		\begin{gathered}
			\phi(x)\in L_1\text{ for }x\in L_1,\| f-\id \|_{\infty}\leq \tau,\\
			(1+\tau)^{-1}\left\vert x-y \right\vert^{1+\tau}\leq \left\vert
			\phi(x)-\phi(y) \right\vert\leq (1+\tau)\left\vert x-y
			\right\vert^{\frac{1}{1+\tau}},\\
			B(0,r)\cap\Omega\subset \phi\left( B\left( 0,\frac{3r}{2} \right)\cap
			\Omega \right)\subset B(0,2r)\cap \Omega,\\
			F\cap B(0,r)\subset \phi\left(Z\cap B\left( 0,\frac{3r}{2}
			\right)\right)\subset F\cap B(0,2r).
		\end{gathered}
	\end{equation}
	Thus 
	\[
	E\cap B(0,r)\subset\phi\left( (Z\cup L_1)\cap B\left( 0,\frac{3r}{2} \right)
	\right)\subset E\cap B(0,2r).
	\]
\end{proof}
\begin{remark}\label{re:y}
	Suppose that $\Omega$, $L_1$ and $U$ are as in Lemma \ref{lemmay}, and that
	$E\subset \Omega$ is a $(U,h)$-sliding-almost-minimal set with
	$\theta_E(0)=\frac{3}{4}$ or $\frac{7}{4}$. If $\tau\in (0,1)$ is small
	enough, we can find $\varepsilon'(\tau)>0$ such that when $r>0$ is
	such that 
	\[
	B(0,10r)\subset U, h(20r)\leq \varepsilon'(\tau),
	\int_{0}^{20r}\frac{h(t)}{t}dt\leq\varepsilon'(\tau),\theta_E(0,10r)\leq
	\theta_E(0)+\varepsilon'(\tau),
	\]
	then for any $x\in E\cap B(0,r)$ and any $0<t\leq 2r$, we can find a minimal
	cone or sliding minimal cone $Z(x,t)$ such that 
	\[
	d_{x,t}(E,Z(x,t))\leq \tau,
	\]
	where $Z(x,t)$ is a minimal cone when $0<t<\dist(x,L_1)$, and $Z(x,t)$ is a
	sliding minimal cone when $\dist(x,L_1)\leq t\leq 2r$.
\end{remark}
Indeed, we can take $\tau\in (0,1)$ be such that 
	\[
	\left( \frac{1}{2}+\varepsilon(\tau)
	\right)e^{\alpha\varepsilon(\tau)}<\frac{3}{4} \text{ and } \left(
	\frac{3}{2}+\varepsilon(\tau)e^{\lambda\varepsilon(\tau)} \right)<d_T,
	\]
	then we take 
	\[
	\tau_1=\min\left\{\frac{\tau}{10^4},
	\frac{1}{100}\varepsilon\left(\frac{\tau}{100}\right)\right\},
	\]
	and let $\varepsilon(\tau_1)$ be as in Lemma \ref{mainlemma}. We can
	check from the proof of Lemma \ref{lemmay} that
	$\varepsilon'(\tau)=\varepsilon(\tau_1)$ is the number what we desire.
\begin{proposition}\label{prop:PRB}
	Let $\Omega$, $L_1$ be as in \eqref{simple}, $U$ an open set.
	Let $E\subset\Omega$ be an $(U,h)$-sliding-almost-minimal set, and $x\in
	L_1\cap U$ be a point. If $\theta_E(x)\in\{
	1/2,3/4,3/2,7/4 \}$, then for each small $\tau>0$, we can find a radius 
	$r>0$, a sliding minimal cone $Z$ centered at $x$ and a biH\"older map
	$\phi: B(x,3r/2)\cap \Omega\to B(x,2r)\cap\Omega$ such that 
	\begin{equation}\label{eq:PRB}
		\begin{gathered}
			\phi(z)\in L_1\text{ for }z\in L_1,\| f-\id \|_{\infty}\leq \tau,\\
			(1+\tau)^{-1}\left\vert z-y \right\vert^{1+\tau}\leq \left\vert
			\phi(z)-\phi(y) \right\vert\leq (1+\tau)\left\vert z-y
			\right\vert^{\frac{1}{1+\tau}},\\
			B(0,r)\cap\Omega\subset \phi\left( B\left( 0,\frac{3r}{2} \right)\cap
			\Omega \right)\subset B(0,2r)\cap \Omega,\\
			E\cap B(0,r)\subset \phi\left(Z\cap B\left( 0,\frac{3r}{2}
			\right)\right)\subset E\cap B(0,2r).
		\end{gathered}
	\end{equation}
	In addition, if $\theta_E(x)=\frac{1}{2}$, $Z$ is a cone of type
	$\mathbb{P}_{+}$; if $\theta_E(x)=\frac{3}{4}$, $Z$ is a cone of type
	$\mathbb{Y}_{+}$; if $\theta_E(x)=\frac{3}{2}$, $Z=Z'\cup L_1$ where $Z'$ 
	is a cone of type $\mathbb{P}_{+}$; if $\theta_E(x)=\frac{7}{4}$, $Z=Z'\cup
	L_1$ where $Z'$ is a cone of type $\mathbb{Y}_{+}$.
\end{proposition}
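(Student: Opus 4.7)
The strategy is a case analysis according to the value of $\theta_E(x)$, reducing to one of the four preceding lemmas after a translation. Since $x \in L_1$ and $L_1$ is a linear subspace of $\mathbb{R}^{3}$, the translation $y \mapsto y - x$ preserves both $\Omega$ and $L_1$ setwise. Setting $\widetilde{E} = E - x$, the set $\widetilde{E}$ is an $(U-x,h)$-sliding-almost-minimal subset of $\Omega$ with $0 \in \widetilde{E} \cap L_1$ and $\theta_{\widetilde{E}}(0) = \theta_E(x)$, so the hypotheses of one of the four lemmas are met at the origin.

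I would then branch on the four possible density values. For $\theta_E(x) = \frac{1}{2}$, Lemma \ref{lemmaplane} applied at the origin to $\widetilde{E}$ produces a biH\"older parameterization by a sliding minimal cone $\widetilde{Z}$ of type $\mathbb{P}_{+}$. For $\theta_E(x) = \frac{3}{4}$, I would invoke Lemma \ref{lemmay} directly on $\widetilde{E}$---its hypothesis is simply that the sliding almost minimal set has density $\tfrac{3}{4}$ at the origin, so it applies without any modification or passage to $\overline{\widetilde{E}\setminus L_1}$---yielding $\widetilde{Z}$ of type $\mathbb{Y}_{+}$. For $\theta_E(x) = \frac{3}{2}$ and $\theta_E(x) = \frac{7}{4}$, Lemmas \ref{le:PUFP2} and \ref{lemmay2} respectively give cones of the form $\widetilde{Z}' \cup L_1$, with $\widetilde{Z}'$ of type $\mathbb{P}_{+}$ or $\mathbb{Y}_{+}$, matching the four cases stated in the proposition.

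To conclude, I would translate back by setting $Z = \widetilde{Z} + x$ (still a sliding minimal cone, now centered at $x$) and $\phi(y) = \widetilde{\phi}(y - x) + x$, where $\widetilde{\phi}$ is the map produced by the relevant lemma. Since translation is an isometry and $L_1 - x = L_1$, every condition in \eqref{eq:PRB} transfers verbatim: the sup-norm estimate $\|\phi - \id\|_{\infty} \le \tau$, the biH\"older inequalities, the nested ball inclusions $B(x,r) \cap \Omega \subset \phi(B(x,3r/2) \cap \Omega) \subset B(x,2r) \cap \Omega$, the sandwich $E \cap B(x,r) \subset \phi(Z \cap B(x, 3r/2)) \subset E \cap B(x, 2r)$, and the property that $\phi$ maps $L_1$ into $L_1$.

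There is essentially no substantive obstacle here. The genuine work---the monotonicity and density arguments, the construction of $\varepsilon$-approximating sliding minimal cones at every scale, and the final invocation of the Reifenberg-type parameterization in Corollary \ref{co:PT}---has already been carried out in each of the four lemmas. The present proposition merely packages them into a single statement in which the shape of the cone $Z$ is dictated by the value $\theta_E(x) \in \{\tfrac{1}{2}, \tfrac{3}{4}, \tfrac{3}{2}, \tfrac{7}{4}\}$.
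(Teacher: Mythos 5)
Your proposal is correct and matches the paper's own argument, which simply observes that the result follows immediately from Lemmas \ref{lemmaplane}, \ref{le:PUFP2}, \ref{lemmay} and \ref{lemmay2} according to the value of $\theta_E(x)$; the translation from $x$ to the origin that you spell out is the only (routine) detail the paper leaves implicit, since $L_1$ and $\Omega$ are translation-invariant along $L_1$.
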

The proof immediately follows from Lemma \ref{lemmaplane}, Lemma
\ref{le:PUFP2}, Lemma \ref{lemmay} and Lemma \ref{lemmay2}.
\begin{corollary}\label{co:regularity}
	Let $\Omega$, $L_1$ be as in \eqref{simple}, $U$ an open set.
	Let $E\subset\Omega$ be an $(U,h)$-sliding-almost-minimal set with
	$E\supset L_1$. Then for each small $\tau>0$ and each $x\in L_1\cap U$, we
	can find a radius $r>0$, a sliding minimal cone $Z$ and a biH\"older map
	$\phi: B(x,3r/2)\to B(x,2r)$ such that 
	\begin{equation}\label{eq:CRT}
		\begin{gathered}
			\phi(x)\in L_1\text{ for }x\in L_1, \| f-\id \|_{\infty}\leq \tau,\\
			(1+\tau)^{-1}\left\vert x-y \right\vert^{1+\tau}\leq \left\vert
			\phi(x)-\phi(y) \right\vert\leq (1+\tau)\left\vert x-y
			\right\vert^{\frac{1}{1+\tau}},\\
			B(0,r)\cap\Omega\subset \phi\left( B\left( 0,\frac{3r}{2} \right)\cap
			\Omega \right)\subset B(0,2r)\cap \Omega,\\
			E\cap B(0,r)\subset \phi\left(Z\cap B\left( 0,\frac{3r}{2}
			\right)\right)\subset E\cap B(0,2r).
		\end{gathered}
	\end{equation}
\end{corollary}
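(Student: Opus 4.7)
The plan is to reduce the corollary to cases already handled, by using the classification of the possible density values at a boundary point. Since $E \supset L_1$, any blow-up limit of $E$ at $x \in L_1 \cap U$ is a sliding minimal cone containing $L_1$, and by Theorem \ref{thm:MCCB} such a cone is either $L_1$ itself or of the form $L_1 \cup Z'$ with $Z'$ of type $\mathbb{P}_{+}$ or $\mathbb{Y}_{+}$. Consequently $\theta_E(x)$ can only be $1$, $3/2$, or $7/4$. I would split the argument according to these three values.

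If $\theta_E(x) = 3/2$ or $\theta_E(x) = 7/4$, there is nothing to do: Proposition \ref{prop:PRB} directly produces the radius $r$, the sliding minimal cone $Z$ (of the form $L_1 \cup Z'$ with $Z'$ of type $\mathbb{P}_{+}$ or $\mathbb{Y}_{+}$ respectively), and the biHölder map $\phi$ with all the stated properties. So these cases are immediate once we know that $\theta_E(x) \in \{1/2,3/4,3/2,7/4\}$ is the admissible list in Proposition \ref{prop:PRB}.

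The case $\theta_E(x)=1$ is the only one needing separate treatment, and it is the genuinely simple case. Here the blow-up limit of $E$ at $x$ is the plane $L_1$, so Lemma \ref{le:D1} applies and yields a radius $r_0 > 0$ such that $E \cap B(x, r_0) = L_1 \cap B(x, r_0)$. One then simply takes $Z = L_1$, $\phi = \id$, and any $r \leq 2r_0/3$. All the conclusions of \eqref{eq:CRT} are then trivial: $\phi$ preserves $L_1$, satisfies $\|\phi - \id\|_\infty = 0 \leq \tau$, is a (bi-)isometry rather than merely biHölder, and the inclusions $E \cap B(x,r) \subset \phi(Z \cap B(x,3r/2)) \subset E \cap B(x,2r)$ reduce to $L_1 \cap B(x,r) \subset L_1 \cap B(x,3r/2) \subset L_1 \cap B(x,2r)$.

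There is essentially no obstacle here; the corollary is a bookkeeping statement that collects Proposition \ref{prop:PRB} with the $\theta_E(x)=1$ case, using the classification of sliding minimal cones containing $L_1$ (Theorem \ref{thm:MCCB}) to rule out all other densities. The only mild subtlety is making sure that when $\theta_E(x)=1$ we do not invoke Proposition \ref{prop:PRB} (which expects the four non-trivial density values) but instead appeal directly to Lemma \ref{le:D1}; this is why the corollary must be stated for general $E \supset L_1$ rather than derived as a formal corollary of Proposition \ref{prop:PRB} alone.
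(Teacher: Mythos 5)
Your proposal is correct and follows essentially the same route as the paper: classify the blow-up limit at $x$ via Theorem \ref{thm:MCCB} into $L_1$ or $L_1\cup Z'$, handle the density-$1$ case with Lemma \ref{le:D1} (where \eqref{eq:CRT} holds trivially with $Z=L_1$ and $\phi=\id$), and invoke Proposition \ref{prop:PRB} for the densities $\tfrac{3}{2}$ and $\tfrac{7}{4}$. Your added remark about not feeding the $\theta_E(x)=1$ case into Proposition \ref{prop:PRB} is a correct reading of why the corollary needs the separate appeal to Lemma \ref{le:D1}, exactly as in the paper's own proof.
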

\begin{proof}
	Since $E\supset L_1$, any blow-up limit $F$ of $E$ at $x$ contains $L_1$, so it
	is a sliding minimal cone contains $L_1$. By Theorem
	\ref{thm:MCCB}, we can get that $F=L_1$ or 
	$F=Z\cup L_1$, where $Z$ is a cone of type $\mathbb{P}_{+}$ or
	$\mathbb{Y}_{+}$. If $F=L_1$, by Lemma \ref{le:D1}, then there exists
	a ball $B(x,r)$ such that $E\cap B(x,r)=L_1\cap B(x,r)$, thus
	\eqref{eq:CRT} hold automatically. If $F\neq L_1$, then $F=Z\cup L_1$ where
	$Z$ is a sliding minimal cone of type $\mathbb{P}_{+}$ or $\mathbb{Y}_{+}$;
	we get that $\theta_E(x)=\frac{3}{2}$ or $\frac{7}{4}$, by Proposition 
	\ref{prop:PRB}, we obtain the conclusion.
\end{proof}

\section{Regularity of sliding almost minimal sets
\uppercase\expandafter{\romannumeral2}}

In the previous section, we get some regularity for sliding almost
minimal sets with whose boundary is a plane. In this section we will
give a similar result, but with where the boundary is a $C^1$ manifold. 

Let $\Sigma\subset\mathbb{R}^{3}$ be a connected closed set such that the boundary
$\partial\Sigma$ is a $2$-dimensional $C^1$ manifold. 
For any $x\in \partial \Sigma$, the tangent cone of $\Sigma$ at $x$ is a
half space, and the boundary of the half space is the 
tangent plane of $\partial\Sigma$ at $x$.
\begin{theorem}\label{mainthm}
	Let $\Sigma$ be as above, $x\in \partial\Sigma$, $U$ be a neighborhood of
	$x$. Suppose that $E\subset \Sigma$ is an
	$(U,h)$-sliding-almost-minimal set with sliding boundary $\partial\Sigma$
	and that $E\supset \partial\Sigma$. Then for each small $\tau>0$, we can
	find a radius $\rho>0$, a sliding minimal cone $Z$ in $\Omega$ with 
	sliding boundary $L_1$ and a biH\"older map 
	$\phi: B(x,3\rho/2)\cap \Omega\to B(x,2\rho)\cap \Sigma$ such that
	\begin{equation}\label{eq:mainthm2}
		\begin{gathered}
			\phi(x)\in \partial\Sigma\text{ for }x\in L_1,\
			\| \phi-\id \|_{\infty}\leq 3\tau,\\
			C\left\vert x-y \right\vert^{1+\tau}\leq \left\vert
			\phi(x)-\phi(y) \right\vert\leq C^{-1}\left\vert x-y
			\right\vert^{\frac{1}{1+\tau}},\\
			B(x,\rho)\cap \Sigma\subset \phi\left( B\left( x,\frac{3\rho}{2} \right)\cap
			\Omega \right)\subset B(x,2\rho)\cap \Sigma,\\
			E\cap B(x,\rho)\subset \phi\left(Z\cap B\left( x,\frac{3\rho}{2}
			\right)\right)\subset E\cap B(x,2\rho),
		\end{gathered}
	\end{equation}
	where $\Omega$ is the tangent cone of $\Sigma$ at $x$ and $L_1$ is the 
	boundary of $\Omega$.
\end{theorem}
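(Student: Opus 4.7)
The strategy is to reduce the statement to Corollary \ref{co:regularity} via a local $C^1$ straightening of the boundary. Since $\partial\Sigma$ is a $C^1$ surface and $\Omega$ is the tangent half-space of $\Sigma$ at $x$, there is a $C^1$ diffeomorphism $\Psi$ from a neighborhood $V$ of $x$ in $\mathbb{R}^3$ onto a neighborhood $\widetilde{V}$ of $x$ with $\Psi(x)=x$, $D\Psi(x)=\id$, $\Psi(\Sigma\cap V)=\Omega\cap\widetilde{V}$ and $\Psi(\partial\Sigma\cap V)=L_1\cap\widetilde{V}$. Given any $\eta>0$, by shrinking $V$ we can arrange that both $\Psi$ and $\Psi^{-1}$ are $(1+\eta)$-Lipschitz and that $\|\Psi-\id\|_{\infty}+\|\Psi^{-1}-\id\|_{\infty}\leq \eta\,\diam(V)$; this is possible because $D\Psi$ is continuous and equals $\id$ at $x$.

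Next I would push $E$ forward and set $\widetilde{E}=\Psi(E\cap V)\subset\Omega$. The first real step is to verify that $\widetilde{E}$ is $(\widetilde{U},\widetilde{h})$-sliding-almost-minimal in $\widetilde{U}=\Psi(U\cap V)$ with sliding boundary $L_1$, for a gauge function $\widetilde{h}$ of the form $\widetilde{h}(r)=(1+\eta)^2 h((1+\eta)r)+\omega(r)$, where $\omega$ is a small additive error coming from the modulus of continuity of $D\Psi$. The argument is the standard one: given a $\delta$-sliding deformation $\{\widetilde{\varphi}_t\}$ in $\widetilde{U}$ preserving $L_1$, the family $\varphi_t=\Psi^{-1}\circ\widetilde{\varphi}_t\circ\Psi$ is a $(1+\eta)\delta$-sliding deformation in $V$ preserving $\partial\Sigma$, and the almost minimality of $E$ combined with the Jacobian bounds $(1+\eta)^{-2}\leq J\Psi\leq (1+\eta)^2$ gives the desired inequality for $\widetilde{E}$. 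Since $E\supset\partial\Sigma$, we also have $\widetilde{E}\supset L_1\cap\widetilde{U}$.

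Now apply Corollary \ref{co:regularity} to $\widetilde{E}$ at the point $x\in L_1$ with parameter $\tau$: this yields a radius $\widetilde{\rho}>0$, a sliding minimal cone $Z\subset\Omega$ and a biH\"older map $\widetilde{\phi}:B(x,3\widetilde{\rho}/2)\cap\Omega\to B(x,2\widetilde{\rho})\cap\Omega$ satisfying all the conclusions of that corollary. Define
\[
\phi=\Psi^{-1}\circ\widetilde{\phi}:B(x,3\rho/2)\cap\Omega\to B(x,2\rho)\cap\Sigma
\]
for $\rho$ chosen slightly smaller than $\widetilde{\rho}$ so that the required inclusions between balls hold after composition. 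The condition $\phi(z)\in\partial\Sigma$ for $z\in L_1$ follows because $\widetilde{\phi}(L_1)\subset L_1$ and $\Psi^{-1}(L_1)\subset\partial\Sigma$. The sup-norm estimate is $\|\phi-\id\|_{\infty}\leq \|\widetilde{\phi}-\id\|_{\infty}+\|\Psi^{-1}-\id\|_{\infty}\leq \tau+\eta\,\diam(V)\leq 3\tau$ provided $\eta$ and $V$ were chosen accordingly. The biH\"older estimates on $\phi$ follow from those of $\widetilde{\phi}$ together with the $(1+\eta)$-Lipschitz control on $\Psi^{-1}$; since $\Psi^{-1}$ is bi-Lipschitz it does not affect the H\"older exponent, but it does modify the multiplicative constant, and this is the origin of the constant $C$ in the statement (which absorbs a factor like $(1+\eta)(1+\tau)$).

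The main obstacle is the first step, namely the careful bookkeeping that shows $\widetilde{E}$ is sliding-almost-minimal with a gauge function $\widetilde{h}$ that still satisfies the hypotheses required in the previous section (decay to zero at the origin and finiteness of $\int_0^r \widetilde{h}(t)/t\,dt$ at small scales). Because $D\Psi(x)=\id$ and $D\Psi$ is continuous, the modulus of continuity $\omega$ can be chosen to go to zero as the scale shrinks, so we can add such a term to $h$ while retaining summability of $\widetilde{h}(t)/t$; this is where the $C^1$ regularity of $\partial\Sigma$ (as opposed to mere continuity) is essential. The remaining steps are essentially bi-Lipschitz bookkeeping and composition of estimates, and require no further geometric input.
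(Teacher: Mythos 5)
Your overall architecture (straighten the boundary, apply the half-space result, compose back) is the natural first idea, but it is not the paper's route, and its first step contains a genuine gap. The claim that $\widetilde{E}=\Psi(E\cap V)$ is $(\widetilde U,\widetilde h)$-sliding-almost-minimal with $\widetilde h(r)=(1+\eta)^2h((1+\eta)r)+\omega(r)$, $\omega$ the modulus of continuity of $D\Psi$, does not hold. First, almost minimality is not preserved by linear maps that are not similarities: at a point $y\neq x$ the differential $D\Psi(y)$ is only $\eta$-close to $\id$ and in general not conformal, so conjugating a deformation supported in a tiny ball around $\Psi(y)$ distorts the two Hausdorff measures being compared by ratios that differ by a fixed amount of order $\eta$ (the tangent directions of $E\cap W_1$ and of $\varphi_1(E\cap W_1)$ are different, so the $2$-Jacobians of the same linear map on them are different). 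This produces an additive error of order $\eta\,\delta^2$ at \emph{every} scale $\delta$, i.e.\ $\widetilde h(\delta)\geq c\eta$ does not tend to $0$; $\widetilde E$ is only quasiminimal, and none of Section 5 applies to it. Second, even if the error were governed by $\omega(\delta)$ as you assert, $C^1$ regularity of $\partial\Sigma$ gives no Dini control: $\omega$ can decay like $1/\log(1/\delta)$, so $\int_0\widetilde h(t)\,dt/t=\infty$ and the almost-monotonicity of density (Theorem 28.7 of the cited work), on which Lemma \ref{mainlemma}, Corollary \ref{co:regularity} and the whole blow-up classification rest, is unavailable for $\widetilde E$.

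The paper avoids transporting the variational property altogether. It keeps $E$ inside $\Sigma$ and establishes there, at every point and every scale, the two-sided approximation of $E$ by minimal cones (interior) and by sliding minimal cones in the tangent half-spaces $\Sigma_a$ (at the boundary); this is Lemma \ref{le:APPBC}, built on the curved-boundary version Lemma \ref{mainlemma2} of the density lemma, which is available directly from \cite{David:2014} for $C^1$ boundaries. Only then does it use the straightening diffeomorphism $f$ (with $\|Df-\id\|\leq\varepsilon$) to carry the set $F=f^{-1}(E\cap B(0,R))$ and the approximating cones into the fixed half-space $\Omega$: the statement being transported is the purely metric inequality $d_{y,t}(\cdot,Z)\leq\tau$, which is stable under such maps up to translating and rotating the cones and an additive error $O(\varepsilon)$ in the normalized Hausdorff distance. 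This is exactly the input Corollary \ref{co:PT} (the Reifenberg parameterization) needs, and that corollary requires no minimality of the set itself. If you want to salvage your route, you would essentially have to assume $\partial\Sigma$ is $C^{1,\alpha}$ (or $C^1$ with Dini derivative) \emph{and} replace $\Psi$ locally by similarities, at which point you have reconstructed the paper's argument.
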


The strategy of the proof will be the same as for Corollary \ref{co:regularity}.
We do not want repeat the whole section above, because
most of the statements and proofs still work. We only give a sketch. 

Firstly, Lemma \ref{mainlemma} is still true when we replace $\Omega$ and
$L_1$ by $\Sigma$ and $\partial\Sigma$ respectively. That is, it can be stated
as follows:
\begin{lemma}\label{mainlemma2}
	Let $\Sigma$ and $\partial\Sigma$ be as in Theorem \ref{mainthm}. Suppose
	that $E\subset\Sigma$ is $(U,h)$-sliding-almost-minimal. If for each 
	$\tau>0$, we can find $\varepsilon_1(\tau)>0$ such that if 
	$x\in E\cap \partial\Sigma$ and $r>0$ are such that 
	\[
	B(x,r)\subset U, h(2r)\leq\varepsilon_1(\tau),
	\int_{0}^{2r}\frac{h(t)dt}{t}\leq\varepsilon_1(\tau),
	\theta_E(x,r)\leq \theta_E(x)+\varepsilon_1(\tau),
	\]
	then for every $\rho\in (0,9r/10]$ there is a sliding minimal cone
	$Z_x^\rho$ such that 
	\[
	d_{x,\rho}(E,Z_x^{\rho})\leq \tau,
	\]
	and for any ball $B(y,t)\subset B(x,\rho)$,
	\[
	|\H^2(E\cap B(y,t))-\H^2(Z_x^{\rho}\cap B(y,t))|\leq \tau\rho^2,
	\]
	where $Z_x^{\rho}$ is a sliding minimal cone in $\Sigma_x$ with sliding
	boundary $T_x$, where we denote by $\Sigma_x$ and $T_x$ the tangent cone of
	$\Sigma$ at $x$ and tangent plane of $\partial\Sigma$ at $x$ respectively.
	If $E\supset \partial\Sigma$, then we can suppose that $Z_x^{\rho}\supset
	T_x$.
\end{lemma}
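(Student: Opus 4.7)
The plan is to reduce Lemma \ref{mainlemma2} to the flat case Lemma \ref{mainlemma} by locally straightening $\partial\Sigma$ via a $C^1$ chart. Since $\partial\Sigma$ is a $2$-dimensional $C^1$ submanifold, there is a neighborhood $V$ of $x$ and a $C^1$ diffeomorphism $\Psi: V \to \Psi(V) \subset \mathbb{R}^3$ with $\Psi(x)=x$, $D\Psi(x)=\mathrm{id}$, $\Psi(\Sigma\cap V) = \Omega \cap \Psi(V)$, and $\Psi(\partial\Sigma\cap V) = L_1 \cap \Psi(V)$, after identifying $\Sigma_x$ with $\Omega$ and $T_x$ with $L_1$. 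Because $D\Psi(x)=\mathrm{id}$, for each $\eta>0$ there exists $r_0>0$ such that $\Psi$ and $\Psi^{-1}$ are both $(1+\eta)$-bi-Lipschitz on $B(x,r_0)$.

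First I would verify that $E' := \Psi(E)$ is $(\Psi(V),\tilde h)$-sliding-almost-minimal in $\Omega$ with sliding boundary $L_1$, for a gauge function $\tilde h$ satisfying $\tilde h(r) \to 0$ as $r \to 0$. This follows because a $\delta$-sliding-deformation $\{\varphi_t'\}$ of $E'$ in $\Omega$ relative to $L_1$ pulls back via $\Psi^{-1}$ to a sliding deformation $\{\Psi^{-1}\circ\varphi_t'\circ\Psi\}$ of $E$ in $\Sigma$ relative to $\partial\Sigma$, with diameter at most $(1+\eta)\delta$; the $\mathcal{H}^2$ measures distort by at most the square of the bi-Lipschitz constant, so the almost-minimality inequality for $E$ transfers to one for $E'$ with an extra multiplicative error absorbed into $\tilde h$. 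Similarly the hypotheses of Lemma \ref{mainlemma} for $E'$ at $x$ with radius $r$ (tolerances $\varepsilon_1(\tau/3)$, say) follow from the corresponding hypotheses for $E$ once $r_0$ is small enough, since $\theta_{E'}(x,r) \le (1+\eta)^2 \theta_E(x, (1+\eta)r)$.

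Applying Lemma \ref{mainlemma} to $E'$ at $x$ yields a sliding minimal cone $Z_x^\rho \subset \Omega$ with boundary $L_1$ such that $d_{x,\rho}(E',Z_x^\rho) \le \tau/3$ and $|\mathcal{H}^2(E'\cap B(y,t))-\mathcal{H}^2(Z_x^\rho\cap B(y,t))| \le (\tau/3)\rho^2$ for every $B(y,t)\subset B(x,\rho)$. If $E\supset\partial\Sigma$ then $E'\supset L_1$, and Lemma \ref{mainlemma} lets us arrange $Z_x^\rho\supset L_1 = T_x$. To transfer the estimates back to $E$, use that $\Psi$ moves points by at most $\eta r_0$ on $B(x,r_0)$ and distorts $\mathcal{H}^2$ by a factor $(1+\eta)^2$, so $d_{x,\rho}(E,E')\le\eta$ and $|\mathcal{H}^2(E\cap B(y,t))-\mathcal{H}^2(E'\cap B(y,t))| \le C\eta\rho^2$; combined with the bounds for $E'$, this gives the desired estimates for $E$ versus $Z_x^\rho$ after choosing $\eta$ small enough that all error terms sum to at most $\tau\rho^2$. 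The main (but still routine) obstacle is the bookkeeping of how the $C^1$ modulus of $\Psi$ enters $\tilde h$ and the density and distance comparisons; all such errors vanish as $r_0 \to 0$ because $D\Psi(x)$ is the identity, so the choice of $r_0$ depending on $\tau$, $\eta$, and $h$ makes the argument close.
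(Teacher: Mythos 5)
Your route is genuinely different from the paper's. The paper does not reprove anything here: it observes that Lemma \ref{mainlemma} was itself a direct citation of Proposition 30.19 in \cite{David:2014}, and that this external result is already formulated for general sliding boundaries, so it applies verbatim with $\Sigma$ and $\partial\Sigma$ in place of $\Omega$ and $L_1$. You instead straighten $\partial\Sigma$ by a $C^1$ chart and reduce to the flat case of Lemma \ref{mainlemma}. That is a more self-contained strategy, and most of your bookkeeping is sound: the displacement bound gives $d_{x,\rho}(E,E')\leq\omega(\rho)$ with $\omega$ the modulus of continuity of $D\Psi$, the measure comparison on balls follows from local Ahlfors regularity, and the identification of $\Sigma_x$ with $\Omega$ is harmless since $D\Psi(x)=\id$.

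However, one step does not close under the paper's hypothesis that $\partial\Sigma$ is merely $C^1$. When you transport the almost-minimality inequality through $\Psi$, a competitor at scale $\delta$ picks up a multiplicative distortion controlled by $\sup_{B(x,C\delta)}\|D\Psi-\id\|$; acting on a quantity of size $O(\delta^2)$, this produces an additive error of order $\omega(C\delta)\,\delta^{2}$. Hence the best gauge function you can assign to $E'=\Psi(E)$ is of the form $\tilde h(\delta)=h(C\delta)+C\,\omega(C\delta)$ (if you instead work with a fixed bi-Lipschitz constant $\eta$ on all of $B(x,r_0)$, as written, then $\tilde h$ does not even tend to $0$ with $\delta$). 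But the hypotheses of Lemma \ref{mainlemma} --- equivalently of Proposition 30.19 in \cite{David:2014}, which rests on the almost-monotonicity of density --- require the Dini condition $\int_{0}^{2r}\tilde h(t)\,\frac{dt}{t}\leq\varepsilon(\tau)$, and $\int_{0}\omega(Ct)\,\frac{dt}{t}$ can diverge for a general $C^1$ surface. So your reduction establishes Lemma \ref{mainlemma2} only when $\partial\Sigma$ is $C^{1,\alpha}$, or $C^1$ with a Dini modulus for its tangent planes; under the literal $C^1$ hypothesis you cannot verify the hypotheses of Lemma \ref{mainlemma} for the straightened set. This is precisely the difficulty the paper sidesteps by invoking the boundary version of the approximation result directly rather than flattening.
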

For each $x\in U\cap \partial\Sigma\cap E$, we see that any blow-up
limit $Z$ of $E$ at $x$ is a sliding minimal cone in $\Sigma_x$ with sliding
boundary $T_x$, see \cite[Theorem 24.13]{David:2014}. 
If $E\supset \partial\Sigma$, we have that $Z\supset T_x$, thus
$Z=T_x$ or $Z=T_x\cup Z'$, where $Z'$ is a sliding minimal cone in $\Sigma_x$ with
sliding boundary $T_x$ of type $\mathbb{P}_{+}$ or $\mathbb{Y}_{+}$. Hence, we
get that $\theta_E(x)=1$, $\frac{3}{2}$ or $\frac{7}{4}$. 

Similar to Lemma \ref{le:D1}, we can get that if $E\supset\partial\Sigma$ is
sliding almost minimal and a blow-up limit of $E$ at $x\in \partial\Sigma$ is
the tangent plane $T_x$ of $\partial\Sigma$ at that point, then there exists
$r>0$ such that $E\cap B(x,r)=\partial\Sigma\cap B(x,r)$. Once we get that, we
can show a similar result to Lemma \ref{le:removeboundary}. That is, if
$E\subset\Sigma$ is $(U,h)$-sliding-almost-minimal and
$E\supset\partial\Sigma$, then, by putting $F=\overline{E\setminus\Sigma}$, we
shall have $\H^2(F\cap \partial\Sigma\cap U)=0$ and $F$ is also
$(U,h)$-sliding-almost-minimal.
Thus, $\theta_F(x)$ can only take two values. That is,
$\frac{1}{2}$ and $\frac{3}{4}$.

Finally, if $E\subset\Sigma$ is sliding almost minimal, 
$x\in E\cap \partial\Sigma$, we can get that if $\theta_E(x)=\frac{1}{2}$ or
$\frac{3}{4}$, then the sliding minimal cone $Z_x^{\rho}$ which is taken in Lemma
\ref{mainlemma2} is of type $\mathbb{P}_{+}$ or $\mathbb{Y}_{+}$; if $E\supset
\partial\Sigma$ and $\theta_E(x)=1$, then $Z_x^{\rho}=T_x$; if
$\theta_E(x)=\frac{3}{2}$ or $\frac{7}{4}$, then $Z_x^{\rho}=T_x\cup Z$, $Z$
is of type $\mathbb{P}_{+}$ or $\mathbb{Y}_{+}$.

We also need a lemma like Lemma \ref{le:BDAP}.
\begin{lemma}
	Let $\Sigma$ and $\partial\Sigma$ be as in Lemma \ref{mainlemma2}, $U$ be an
	open set. Let $E\subset\Sigma$ be a $(U,h)$-sliding-almost-minimal set. Let
	$\varepsilon\in (0,1/100)$ be a small number.
	Suppose that $x\in U\cap \partial\Sigma\cap E$, $\theta_E(x)=\frac{1}{2}$ or
	$\frac{3}{4}$. If $B(x,2r_0)\subset U$, and for any $y\in E\cap
	\partial\Sigma\cap B(x,r_0)$ and any $0<\rho\leq r_0$, there exists a
	sliding minimal cone $Z_y^\rho$ in $\Omega_y$ (the tangent cone of $\Sigma$
	at $y$) with sliding boundary $\partial\Omega_y$ such that 
	\[
	d_{y,r}(E,Z_y^\rho)\leq \varepsilon,
	\]
	then there exists a radius $r>0$ such that for any $z\in E\cap B(x,r)$, we
	can find a point $a\in E\cap B(x,2r)\cap\partial\Sigma$ satisfying 
	\[
	\dist(z,\partial\Sigma)\geq (1-10\varepsilon)|z-a|.
	\]
\end{lemma}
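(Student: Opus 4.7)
My plan is to mimic the argument of Lemma~\ref{le:BDAP}, with the flat sliding boundary $L_1$ replaced by the $C^1$ manifold $\partial\Sigma$ and the orthogonal projection $\Pi_{L_1}$ replaced by the nearest--point projection $\pi_\Sigma$ onto $\partial\Sigma$. The one structural novelty is that at each $a\in\partial\Sigma$ the sliding minimal cones $Z_a^\rho$ furnished by the hypothesis live in the tangent cone $\Omega_a$ with boundary the tangent plane $T_a$, whereas $E$ lives in $\Sigma$. I would begin by shrinking $r$ so that (i) $\pi_\Sigma$ is well defined and $C^1$ on $B(x,2r)\cap\Sigma$, and (ii) for every $a\in\partial\Sigma\cap B(x,2r)$ and every $w\in B(a,2r)$ one has $|\pi_{T_a}(w)-\pi_\Sigma(w)|\leq \varepsilon|w-a|$. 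Both requirements come from the $C^1$ regularity of $\partial\Sigma$. Because $\theta_E(x)\in\{1/2,3/4\}$, the cones $Z_a^\rho$ must be of type $\mathbb{P}_{+}$ or $\mathbb{Y}_{+}$, hence perpendicular to $T_a$.

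Fix $z\in E\cap B(x,r)$; the case $z\in\partial\Sigma$ is trivial, so assume $z\notin\partial\Sigma$. Let $p:=\pi_\Sigma(z)$, choose $a\in E\cap\partial\Sigma$ with $|p-a|\leq (1+\varepsilon)\dist(p,E\cap\partial\Sigma)$, and set $\rho:=2|z-a|$. Applying the hypothesis at $a$ and the $\varepsilon$--approximation of $E$ by $Z_a^\rho$, I would pick $\tilde z\in Z_a^\rho$ with $|z-\tilde z|\leq\varepsilon\rho$; because $Z_a^\rho\perp T_a$, the orthogonal projection $\tilde z':=\pi_{T_a}(\tilde z)$ lies on $Z_a^\rho\cap T_a$, which is a line or a union of three half--lines through $a$. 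The aim is to prove $|a-\tilde z'|\leq 3\varepsilon\rho$; combined with the tangency bounds $|\pi_{T_a}(z)-p|\leq\varepsilon\rho$ and $|\tilde z'-\pi_{T_a}(z)|\leq 2\varepsilon\rho$, this will give $|p-a|\leq 5\varepsilon\rho=10\varepsilon|z-a|$ and hence
\[
\dist(z,\partial\Sigma)=|z-p|\geq |z-a|-|p-a|\geq (1-10\varepsilon)|z-a|.
\]

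The hard part is the inequality $|a-\tilde z'|\leq 3\varepsilon\rho$, which is precisely the step where the flat--boundary proof of Lemma~\ref{le:BDAP} stops short. I would argue by contradiction exactly as there: if $|a-\tilde z'|>3\varepsilon\rho$, then either $B(\tilde z',3\varepsilon\rho/2)\cap E\cap\partial\Sigma$ is non--empty, giving a point of $E\cap\partial\Sigma$ closer to $p$ than $a$ is and contradicting the almost--minimal choice of $a$; or this ball meets $\partial\Sigma$ but misses $E$, in which case I would build a $\delta$--sliding deformation that retracts the relevant slab of $E$ near $\tilde z$ onto $\partial\Sigma$ and strictly decreases $\mathcal{H}^2$, violating $(U,h)$--sliding--almost--minimality once $h$ is small enough. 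In the flat case this retraction is a straightforward radial projection onto $L_1$; in the present curved setting I would build it from a $C^1$ graph parametrization of $\partial\Sigma$ over $T_a$, the extra Jacobian contributions being of order $\varepsilon\rho$ and therefore absorbed by the looser constant $10\varepsilon$ (rather than the $8\varepsilon$ of Lemma~\ref{le:BDAP}) on the right--hand side of the conclusion.
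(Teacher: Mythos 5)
The paper states this lemma without any proof; it is offered as the curved-boundary analogue of Lemma \ref{le:BDAP}, and your proposal carries out precisely the adaptation the paper intends (nearest point on $\partial\Sigma$ in place of $\Pi_{L_1}$, tangent-plane comparison at the chosen boundary point $a$, then the same dichotomy as in the flat case). So the approach matches, and the level of detail you reach is the same as in the paper's own model proof of Lemma \ref{le:BDAP}, which also ends the second branch with ``we can construct a projection to show that $E$ is not almost minimal'' rather than an explicit deformation.

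Three points to tighten. First, $\partial\Sigma$ is only $C^1$, so the nearest-point projection $\pi_\Sigma$ need not be single-valued, let alone $C^1$, on any neighborhood (that requires positive reach, e.g.\ $C^{1,1}$); your argument only ever uses the existence of \emph{some} nearest point $p$ together with the uniform tangent-plane approximation, so state (i)--(ii) in that weaker form. Second, the constants: as written you get $|p-a|\le\varepsilon\rho+2\varepsilon\rho+3\varepsilon\rho=6\varepsilon\rho=12\varepsilon|z-a|$, overshooting the claimed $10\varepsilon$; this is repaired by noting that $|\tilde z'-\pi_{T_a}(z)|=|\pi_{T_a}(\tilde z)-\pi_{T_a}(z)|\le|z-\tilde z|\le\varepsilon\rho$ because $\pi_{T_a}$ is $1$-Lipschitz. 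Third, the contradiction framing of the first branch does not actually close with these constants (the same defect is present in the paper's proof of Lemma \ref{le:BDAP}, where $\frac{2\varepsilon\rho}{1+\varepsilon}\le\frac52\varepsilon\rho$ is declared a contradiction but is not one): if $B(\tilde z',\tfrac32\varepsilon\rho)\cap E\cap\partial\Sigma\neq\emptyset$ you should simply conclude directly, since then $\dist(p,E\cap\partial\Sigma)\le|p-\tilde z'|+\tfrac32\varepsilon\rho\le\tfrac72\varepsilon\rho$ and the near-minimality of $a$ gives $|p-a|\le(1+\varepsilon)\tfrac72\varepsilon\rho\le 8\varepsilon|z-a|$, which already yields $\dist(z,\partial\Sigma)\ge(1-10\varepsilon)|z-a|$; only the empty-intersection case requires the (still to be written) sliding deformation. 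With those adjustments your proof is as complete as the flat-boundary argument it is modeled on.
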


Now, we state a similar result as Lemma \ref{lemmaplane} and Lemma \ref{lemmay},
or rather, a similar result as Remark \ref{re:plane} and Remark \ref{re:y}.
The proof can be adapted from the proof of Lemma \ref{lemmaplane} and
Lemma \ref{lemmay}.
\begin{lemma}\label{le:APPBC}
	Let $\Sigma$ and $\partial\Sigma$ be as in Lemma \ref{mainlemma2}. Let
	$E\subset\Sigma$ be a $(U,h)$-sliding-almost-minimal set such that
	$\theta_E(x)\in\{\frac{1}{2},\frac{3}{2},\frac{3}{4},\frac{7}{4}\}$,
	$x\in E\cap \partial\Sigma\cap U$. If $\tau\in (0,1)$ is a small 
	enough number, then we can find $\varepsilon_2(\tau)>0$ such that when 
	\[
		B(x,10r)\subset U, h(20r)\leq\varepsilon_2(\tau),
		\int_{0}^{20r}\frac{h(t)dt}{t}\leq\varepsilon_2(\tau),
		\theta_E(x,10r)\leq \theta_E(x)+\varepsilon_2(\tau),
	\]
	for some $r>0$, we have that for any $y\in E\cap B(x,r)$, and any $0<t\leq
	2r$, there exists a cone or a sliding minimal cone $Z(y,t)$ satisfying 
	\[
		d_{y,t}(E,Z(y,t))\leq \tau,
	\]
	where $Z(y,t)$ is a cone when $0<t<\dist(x,\partial\Sigma)$, $Z(y,t)$ is a sliding
	minimal cone centered at a point in $B(x,r)\cap \partial\Sigma$ when $\dist(y,\partial\Sigma)\leq t\leq 2r$.
\end{lemma}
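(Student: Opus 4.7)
The plan is to mirror the inductive scheme used in Lemmas \ref{lemmaplane} and \ref{lemmay} (together with their Remarks \ref{re:plane} and \ref{re:y}), with the fixed half-space $\Omega$ replaced throughout by the tangent half-space $\Omega_x=\Sigma_x$ at the reference boundary point, and to exploit the $C^1$ regularity of $\partial\Sigma$ to control the deviation of $\partial\Sigma$ from $T_x$ (and of nearby tangent planes $T_y$ from $T_x$) inside $B(x,10r)$.

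First I would fix auxiliary constants $\tau_2,\tau_1$ depending on $\tau$ exactly as in the proof of Lemma \ref{lemmaplane} when $\theta_E(x)\in\{1/2,3/2\}$, or as in the proof of Lemma \ref{lemmay} when $\theta_E(x)\in\{3/4,7/4\}$, and set $\varepsilon_2(\tau)=\varepsilon_1(\tau_1)$, where $\varepsilon_1$ is the function produced by Lemma \ref{mainlemma2}. Applying Lemma \ref{mainlemma2} at $x$ then yields, for every $\rho\le 9r$, a sliding minimal cone $Z_x^\rho$ in $\Omega_x$ with boundary $T_x$ that approximates $E$ in $B(x,\rho)$ within error $\tau_1$, together with the accompanying measure estimate. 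Shrinking $r$ once if necessary, the $C^1$ hypothesis on $\partial\Sigma$ ensures that the Hausdorff distance between $\partial\Sigma\cap B(x,10r)$ and $T_x\cap B(x,10r)$ is at most $\tau_1 r$, and similarly for $\Sigma$ versus $\Omega_x$.

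Next I would run, at every $y\in E\cap \partial\Sigma\cap B(x,r)$, the same monotonicity-plus-discreteness argument that produced \eqref{eq:LP1}--\eqref{eq:LP2} and \eqref{eq:LY3}: the measure estimate at scale $\rho$ around $x$, restricted to $B(y,t)$, forces $\theta_E(y,t)\le \theta_E(y)+\varepsilon_1(\tau_2)$ for a range of $t$, and Lemma \ref{mainlemma2} can be reapplied at $y$ to give sliding-cone approximation $Z_y^t$ in $\Omega_y$ with boundary $T_y$ at scale $\tau_2$. For interior points $z\in E\cap B(x,r)\setminus\partial\Sigma$ I would split by whether $\dist(z,\partial\Sigma)\ge\frac{1}{3}|z-x|$ or not. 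In the small regime I use the curved version of Lemma \ref{le:BDAP} stated just above to find a boundary point $a$ with $|z-a|$ comparable to $\dist(z,\partial\Sigma)$, and translate the sliding cone $Z_a^{t+|z-a|}$ so that it passes through $z$. In the large regime I use the cone $Z_x^{|z-x|+t}$, apply Theorem 28.7 of \cite{David:2014} and the finite list of densities from Remark \ref{re:listofminimalcones} to pin down $\theta_E(z)\in\{1,3/2\}$, and then invoke Lemmas 16.11 and 16.25 of \cite{David:2009} to supply the interior approximation at very small scales.

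The main obstacle I expect is that, because $\partial\Sigma$ is curved, the sliding cones $Z_y^t$ I produce live in different tangent half-spaces $\Omega_y$, whose boundary planes $T_y$ rotate as $y$ varies. To state the conclusion uniformly one has to tilt and translate each $Z_y^t$ by an amount controlled by the $C^1$ modulus of continuity of the Gauss map of $\partial\Sigma$, which introduces extra error terms not present in the planar case. The saving grace is that this modulus tends to $0$ uniformly on $B(x,10r)$ as $r\to 0$, so after a final reduction of $r$ the accumulated tilt-and-translate errors are absorbed into the target constant $\tau$. Modulo this bookkeeping, which is the curved analogue of the translations already carried out in \eqref{eq:LY6}, \eqref{eq:LY13} and \eqref{eq:LY16}, the chain of inequalities leading from the scale-$10r$ control at $x$ to pointwise-and-scale control at arbitrary $(y,t)$ is a direct transcription of Lemmas \ref{lemmaplane} and \ref{lemmay}.
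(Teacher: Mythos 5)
Your proposal follows exactly the route the paper intends: the paper offers no written proof of Lemma \ref{le:APPBC} beyond the remark that it ``can be adapted from the proof of Lemma \ref{lemmaplane} and Lemma \ref{lemmay}'', and your outline is precisely that adaptation --- the same choice of auxiliary constants $\tau_1,\tau_2$, the same density pinning via Theorem 28.7 of \cite{David:2014}, the same case split on $\dist(z,\partial\Sigma)$ versus $|z-x|$, and a correct identification of the only genuinely new difficulty, namely the rotation of the tangent half-spaces $\Omega_y$ controlled by the $C^1$ modulus of $\partial\Sigma$. The only remark worth adding is that the lemma as stated allows $Z(y,t)$ to be a sliding minimal cone in the tangent half-space at its own center, so the final tilt-to-a-common-half-space bookkeeping you describe is not needed here and is in fact deferred to the proof of Theorem \ref{mainthm}, where the rotations $\mathcal{R}_y$ are introduced.
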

\begin{proof}[Proof of Theorem \ref{mainthm}]
Without loss of generality, we assume $x=0$. Let $\Omega$ be the tangent cone
of $\Sigma$ at $0$, $L_1$ be the tangent plane of $\partial\Sigma$ at $0$.
Then $\Omega$ is a half space, and $L_1$ is its boundary. Let $\tau>0$ and
$r>0$ be as in Lemma \ref{le:APPBC}. Since $\partial\Sigma$ is a 2-dimensional
$C^1$ manifold, for any $\varepsilon\in (0,\tau)$,  we can find a radius
$0<R<\frac{r}{2}$ and a $C^1$ diffeomorphism $f:\Omega\cap B(0,R)\to\Sigma$ 
such that $f(0)=0$, $Df(0)=\id$, $f(L_1\cap B(0,R))\subset\partial\Sigma$ and
\[
\|Df(x)-\id\|\leq \varepsilon.
\]
We put 
\[
F=f^{-1}(\Sigma\cap B(0,R)).
\]

For any $x\in F$ and $0<t\leq 2r$, by Lemma \ref{le:APPBC}, we can find a
minimal cone or a sliding minimal cone $Z(f(x),t)$ such that 
\[
d_{f(x),t}(E,Z(f(x),t))\leq \tau,
\]
then
\[
d_{x,(1-\varepsilon)t}\left( f^{-1}(E\cap B(0,R)),Z'(x,t) \right)\leq 
(1+\varepsilon)\tau,
\]
where we assume that $Z(f(x),t)$ is centered at $a$, and denote 
\[
Z'(x,t)=Df^{-1}(x)\left( Z(f(x),t)-a \right)+f^{-1}(a).
\]
We note from Lemma \ref{le:APPBC} that if 
$Z(f(x),t)$ is a sliding minimal cone, then it is centered at a point in
$\partial\Sigma$. Thus $a\in B(0,R)\cap\partial\Sigma$, and $Z(f(x),t)$ is a
sliding minimal cone in $\Sigma_a$ with sliding boundary $T_a$.

Since $\|Df(x)-\id\|\leq \varepsilon$, we have $\|Df^{-1}(x)-\id\|\leq
2\varepsilon$. We take $Z''(x,t)=Z(f(x),t)-a+f^{-1}(a)$, then
\[
d_{x,(1-\varepsilon)t}(Z'(x,t)),Z''(x,t))\leq 2\varepsilon,
\]
thus
\[
d_{x,(1-\varepsilon)t}(F,Z''(x,t))\leq (1+\varepsilon)\tau+2\varepsilon.
\]
$Z''(x,t)$ is a minimal cone or a sliding minimal. 

Let $\mathcal{T}_a:\mathbb{R}^{3}\to\mathbb{R}^{3}$ be the translation 
which send point $z$ to $z-a+f^{-1}(a)$. Then
$Z''(x,t)=\mathcal{T}_a(Z(f(x),t))$. If $Z(f(x),t)$ is a sliding minimal cone,
then $Z''(x,t)$ is a slding minimal cone in $\mathcal{T}_a(\Sigma_a)$ with
sliding boundary $\mathcal{T}_a(T_a)$.
We put $y=f^{-1}(a)$, then it is quite easy to see that $Df(y)$ maps
$\Omega$ and $L_1$ to $\mathcal{T}_a(\Sigma_a)$ and $\mathcal{T}_a(T_a)$
respectively. Since $\|Df(y)-\id\|\leq \varepsilon$, we can find a rotation
$\mathcal{R}_y$ centered at point $y$, which will rotate 
$\mathcal{T}_a(\Sigma_a)$ and $\mathcal{T}_a(T_a)$ to $\Omega$ and $L_1$
respectively, such that 
\[
d_{0,1}(\mathcal{R}_y(Z''(x,t)),Z''(x,t))\leq 2\varepsilon,
\]
then $\mathcal{R}(Z''(x,t))$ is a sliding minimal cone in $\Omega$ with 
sliding boundary $L_1$. 

We take $Z(x,t)=\mathcal{R}(Z''(x,t))$ when $Z''(x,t)$ is a slding minimal 
cone, and take $Z(x,t)=Z''(x,t)$ when $Z''(x,t)$ is a minimal cone, then 
\[
d_{x,(1-\varepsilon)t}(F,Z(x,t))\leq (1+\varepsilon)\tau+5\varepsilon<7\tau.
\]

By Corollary \ref{co:PT}, we can find a radius $r'\in (0,R/2)$, a sliding minimal cone
$Z$ in $\Omega$ with sliding boundary $L_1$, and a biH\"older map $\varphi:
B(0,3r'/2)\cap\Omega\to B(0,2r')\cap\Omega$ such that 
\[
		\begin{gathered}
			\varphi(x)\in L_1\text{ for }x\in L_1\text{ and } \| \varphi-\id \|_{\infty}\leq
			\tau,\text{ and}\\
			(1+\tau)^{-1}\left\vert x-y \right\vert^{1+\tau}\leq \left\vert
			\varphi(x)-\varphi(y) \right\vert\leq (1+\tau)\left\vert x-y
			\right\vert^{\frac{1}{1+\tau}},\\
			B(0,r')\cap\Omega\subset \varphi\left( B\left( 0,\frac{3r'}{2} \right)\cap
			\Omega \right)\subset B(0,2r')\cap \Omega,\\
			F\cap B(0,r')\subset \varphi\left(Z\cap B\left( 0,\frac{3r'}{2}
			\right)\right)\subset F\cap B(0,2r').
		\end{gathered}
\]
We now take $\phi=f\circ\varphi$. Then $\phi:B(0,3r'/2)\cap \Omega\to \Sigma$
is a biH\"older map, and we can easily check that \eqref{eq:mainthm2} hold if
we take $\rho=r'/2$.
\end{proof}
\section{Existence of two dimensional singular minimizers}
Let $\Sigma\subset\mathbb{R}^{3}$ be a connected closed set such that the boundary
$\partial\Sigma$ is a $2$-dimensional connected compact $C^1$ manifold.
Let $G$ be any abelian group, $L$ be a subgroup of the \v Cech homology group 
$\check{H}_1(\partial\Sigma;G)$. We say a compact set $E\supset \partial\Sigma$
spans $L$ if $L$ is contained in the kernel of the homomorphism induced by the
inclusion map $\partial\Sigma\to E$. We set
\[
\mathscr{C}=\{ E\subset\Sigma\mid E\text{ spans }L\}.
\]
From paper \cite{Fang:2013}, we see that there exist a set $E_0\in
\mathscr{C}$, we call it a \v Cech minimizer, such that 
\begin{equation}\label{eq:minimizer}
	\H^2(E_0\setminus\partial\Sigma)=
	\inf_{E\in\mathscr{C}}\H^2(E\setminus\partial\Sigma).
\end{equation}
Let's check that $E_0$ is also sliding minimal with boundary
$\partial\Sigma$. Let $\{ \varphi_t \}_{0\leq t\leq 1}$ be any
sliding-deformation in $\Sigma$. We put $F=\varphi_1(E_0)$, denote by
$i:\partial\Sigma\to E_0$ and $j:\partial\Sigma\to F$ the inclusion maps. 
We consider the map 
\[
\psi:\partial\Sigma\times [0,1]\to F, \psi(x,t)=\varphi_t(x).
\]
It is continuous, and $\psi(x,0)=j(x)$,
$\psi(x,1)=\varphi\vert_{\partial\Sigma}(x)$, thus the maps
$j:\partial\Sigma\to F$ and $\varphi\vert_{\partial\Sigma}:\partial\Sigma\to
F$ are homotopy equivalent. Then $j_{\ast}=(\varphi\vert_{\partial 
\Sigma})_{\ast}$, where for any map between two topology
spaces $f:X\to Y$, we denote by $f_{\ast}$ the homomorphism 
$\check{H}_1(X;G)\to \check{H}_1(Y;G)$ induced by the map $f$.
However, we know that $\varphi_1\vert_{B}=\varphi_1\vert_{E_0}\circ i$, thus 
\[
j_{\ast}=(\varphi_1\vert_{B})_{\ast}=(\varphi_1\vert_{E_0})_{\ast}\circ
i_{\ast}.
\]
But we know that $i_{\ast}(L)=0$, thus $j_{\ast}(L)=0$, and $F\in
\mathscr{C}$, so 
\[
\H^2(F\setminus\partial\Sigma)\geq \H^2(E_0\setminus\partial \Sigma),
\]
$E_0$ is sliding minimal.

We now consider an analogous topic, that replace \v Cech homology by
singular homology. Since $\partial\Sigma$ is a two dimensional $C^1$
manifold, the singular homology groups and \v Cech homology groups coincide,
that is, $H_1(\partial\Sigma;G)=\check{H}_1(\partial\Sigma;G)$. We say that
a compact subsets $E\supset\partial\Sigma$ spans $L$ in singular homology,
if $L$ is contained in the kernel of the homomorphism 
$H_1(\partial\Sigma;G)\to H_1(E;G)$ induced by the inclusion map
$\partial\Sigma\to E$. We consider another collection of compact sets 
\[
\mathscr{S}=\{ E\mid E\text{ spans }L \text{ in singular homology} \}.
\]
It is quite easy to see that $\mathscr{S}\subset\mathscr{C}$, that is
because there is a canonical homomorphism from singular homology group to 
\v Cech homology group $H_1(E;G)\to \check{H}_1(E;G)$, and the following 
diagram commutes:
\[
\xymatrix{
H_1(\partial\Sigma;G)\ar[r]\ar@{=}[d]&H_1(E;G)\ar[d]\\
\check{H}_1(\partial\Sigma;G)\ar[r]&\check{H}_1(E;G).
}
\]
If $E$ spans $L$ in singular homology, then from the above commutative diagram,
we can get that $E$ spans $L$ in \v Cech homology, thus $\mathscr{S}\subset
\mathscr{C}$. Our goal is to find a singular minimizer, that is, we want to
find a set $E\in \mathscr{S}$, we call it a singular minimizer, such that 
\[
\H^2(E\setminus\partial\Sigma)=\inf_{F\in
\mathscr{S}}\H^2(F\setminus\partial\Sigma).
\]
\begin{proposition}
	Let $\Sigma$, $G$, $L$ be as above. Then there exists a singular minimizer.
\end{proposition}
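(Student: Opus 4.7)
The plan is to show that any \v Cech minimizer $E_{0}$ from \eqref{eq:minimizer} actually belongs to $\mathscr{S}$. Because $\mathscr{S}\subset\mathscr{C}$, this yields
\[
\H^2(E_{0}\setminus\partial\Sigma)=\inf_{E\in\mathscr{C}}\H^2(E\setminus\partial\Sigma)\leq\inf_{E\in\mathscr{S}}\H^2(E\setminus\partial\Sigma)\leq\H^2(E_{0}\setminus\partial\Sigma),
\]
so $E_{0}$ is a singular minimizer. As the authors have just shown, such an $E_{0}$ is sliding minimal with sliding boundary $\partial\Sigma$ and contains $\partial\Sigma$, so both Taylor's interior regularity \cite{Taylor:1976,David:2009} and the boundary regularity Theorem \ref{mainthm} apply to it.

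The key step is to prove that $E_{0}$ is locally contractible at every point. At an interior point $x\in E_{0}\setminus\partial\Sigma$, Taylor's theorem gives a small $\rho>0$ and a $C^1$ diffeomorphism identifying $E_{0}\cap B(x,\rho)$ with the intersection of a minimal cone and a ball centered at its vertex; this is star-shaped and hence contractible. At a boundary point $x\in\partial\Sigma$, Theorem \ref{mainthm} produces, for arbitrarily small $\rho>0$, a biH\"older homeomorphism $\phi$ and a sliding minimal cone $Z$ with
\[
E_{0}\cap B(x,\rho)\subset\phi\bigl(Z\cap B(x,3\rho/2)\bigr)\subset E_{0}\cap B(x,2\rho).
\]
All cones listed in Remark \ref{re:listofminimalcones} are star-shaped with respect to the vertex, so $Z\cap B(x,3\rho/2)$ is contractible by radial homotopy, and pushing this homotopy through $\phi$ contracts $\phi(Z\cap B(x,3\rho/2))$ to a point. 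The sandwich inclusion therefore shows that the inclusion $E_{0}\cap B(x,\rho)\hookrightarrow E_{0}\cap B(x,2\rho)$ is null-homotopic, and letting $\rho$ shrink gives a neighborhood basis of $x$ in $E_{0}$ with null-homotopic inclusions.

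Finally, $E_{0}$ and $\partial\Sigma$ are compact, finite-dimensional, locally contractible metric spaces, hence ANRs by Borsuk's theorem (see Hu, \emph{Theory of Retracts}); for compact ANRs, the canonical natural transformation from singular to \v Cech homology is an isomorphism in every degree. Then the commutative square
\[
\xymatrix{
H_1(\partial\Sigma;G)\ar[r]\ar[d]_{\cong}&H_1(E_{0};G)\ar[d]^{\cong}\\
\check{H}_1(\partial\Sigma;G)\ar[r]&\check{H}_1(E_{0};G)
}
\]
and the fact that the lower row annihilates $L$ (by $E_{0}\in\mathscr{C}$) force the upper row to annihilate $L$, so $E_{0}\in\mathscr{S}$. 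The hard part is the local contractibility at boundary points, since Theorem \ref{mainthm} does not directly identify $E_{0}\cap B(x,\rho)$ with a cone slice but only sandwiches it between two $\phi$-images at different scales; one must settle for the weaker conclusion that small neighborhoods are null-homotopic in slightly larger ones, which is nevertheless enough for both local contractibility and the ANR/homology comparison used above.
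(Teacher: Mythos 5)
Your proposal is correct and follows the same overall strategy as the paper: take a \v Cech minimizer $E_0$, observe that $\mathscr{S}\subset\mathscr{C}$ so it suffices to show $E_0\in\mathscr{S}$, and deduce this from the agreement of singular and \v Cech homology on $E_0$ together with the commutative comparison square. The one point where you diverge is in justifying that agreement: the paper invokes the argument of David (2013, Section 6) to conclude that $E_0$ is a H\"older neighborhood retract and then cites Marde\v{s}i\'{c} and Eilenberg--Steenrod, whereas you establish local contractibility directly from the sandwich inclusions supplied by the interior regularity of Taylor and by Theorem \ref{mainthm} at boundary points, and then appeal to Borsuk's criterion that a compact, finite-dimensional, locally contractible metric space is an ANR. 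Your route is more self-contained on this step, and you correctly handle the relevant subtlety: the regularity statements only show that the inclusion of $E_0\cap B(x,\rho)$ into $E_0\cap B(x,2\rho)$ factors through the contractible set $\phi\left(Z\cap B(x,3\rho/2)\right)$, hence is null-homotopic, which is exactly the form of local contractibility Borsuk's theorem requires. The paper's route produces an explicit neighborhood retraction and is in that sense slightly stronger, but both arguments suffice for the homology comparison and hence for the conclusion.
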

\begin{proof}
	Let $E_0$ be a \v{C}ech minimizer. 
	We know, from above discussion, that $E_0$ is sliding minimal. Thus for
	any $x\in E_0$, there is a neighborhood of $x$ where $E_0$ is biH\"older 
	equivalent to minimal cone and by a biH\"older mapping that preserves
	$\partial\Sigma$. By a same
	argument as in \cite[Section 6]{David:2013}, we conclude that $E_0$ is 
	H\"older neighborhood retract. Let's check that $E_0$ is a singular
	minimizer, It is sufficient to show that $E_0$ spans $L$ in
	singular homology. Indeed, the canonical homomorphism $H_1(E_0;G)\to
	\check{H}_1(E_0;G)$ is an isomorphism since $E_0$ is neighborhood retract,
	see for example \cite{Sibe:1959,ES:1952}. Now $E_0$ is a \v{C}ech minimizer, 
	$E_0$ spans $L$ in \v{C}ech homology, thus $E_0$ spans $L$ in singular 
	homology, and we get the conclusion.
\end{proof}


\begin{bibdiv}
\begin{biblist}

\bib{Chavel:2006}{book}{
      author={Chavel, Isaac},
       title={Riemannian geometry: A modern introduction},
     edition={2},
   publisher={Cambridge University Press},
        date={2006},
}

\bib{David:2008}{article}{
      author={David, G.},
       title={{$C^{1+\alpha}$}-{H}\"older regularity of two-dimensional
  almost-minimal sets in {$\mathbb{R}^n$}},
     journal={Journal of Geometric Analysis},
      volume={20},
      number={4},
       pages={837\ndash 954},
}

\bib{David:2009}{article}{
      author={David, G.},
       title={H\"older regularity of two-dimensional almost-minimal sets in
  {$\mathbb{R}^n$}},
        date={2009},
     journal={Annales de la faculté des sciences de Toulouse},
      volume={18},
      number={1},
       pages={65\ndash 246},
}

\bib{David:2012}{article}{
      author={David, G.},
       title={Should we solve {Plateau’s} problem again?},
        date={2012},
     journal={Advances in Analysis: The Legacy of Elias M. Stein. Edited by
  Charles Fefferman, Alexandru D. Ionescu, DH Phong, and Stephen Wainger,
  Princeton Mathematical Series},
      volume={50},
       pages={108\ndash 145},
}

\bib{David:2013}{article}{
      author={David, G.},
       title={Regularity of minimal and almost minimal sets and cones: J.
  taylor’s theorem for beginners},
        date={2013},
     journal={Analysis and geometry of metric measure spaces},
      volume={56},
       pages={67\ndash 117},
}

\bib{David:2014}{article}{
      author={David, G.},
       title={Local regularity properties of almost- and quasiminimal sets with
  a sliding boundary condition},
        date={2014},
      eprint={http://arxiv.org/abs/1401.1179},
      status={Preprint},
}

\bib{DDT:2008}{article}{
      author={David, G.},
      author={Pauw, T.~De},
      author={Toro, T.},
       title={A generalization of reifenberg's theorem in {$\mathbb{R}^3$}},
        date={2008},
     journal={Geometric and Functional Analysis},
      volume={18},
       pages={1168\ndash 1235},
}

\bib{ES:1952}{book}{
      author={Eilenberg, S.},
      author={Steenrod, N.},
       title={Foundations of algebraic topology},
   publisher={Princeton},
        date={1952},
}

\bib{Fang:2013}{article}{
      author={Fang, Yangqin},
       title={Existence of minimizers for the {Reifenberg Plateau} problem},
        date={2013},
      eprint={http://arxiv.org/abs/1310.4690},
      status={Preprint},
}

\bib{Federer:1969}{book}{
      author={Federer, H.},
       title={Geometric measure theory},
   publisher={Springer-Verlag, New York},
        date={1969},
}

\bib{DGM:2014}{article}{
      author={Lellis, C.~De},
      author={Ghiraldin, F.},
      author={Maggi, F.},
       title={{A direct approach to Plateau's problem}},
        date={2014},
      eprint={http://arxiv.org/abs/1408.4047},
      status={Preprint},
}

\bib{Sibe:1959}{article}{
      author={Marde\v{s}i\'{c}, Sibe},
       title={Comparison of singular and \v{C}ech homology in locally connected
  spaces.},
        date={1959},
     journal={Michigan Math. J.},
      volume={6},
       pages={151\ndash 166},
}

\bib{Morgan:1994}{article}{
      author={Morgan, F.},
       title={{$({\bf M},\varepsilon,\delta)$}-minimal curve regularity},
        date={1994},
     journal={Proceedings of the A.M.S.},
      volume={120},
      number={3},
       pages={677\ndash 686},
}

\bib{DDG:2015}{article}{
      author={Philippis, G.~De},
      author={Rosa, A.~De},
      author={Ghiraldin, F.},
       title={{A direct approach to Plateau's problem in any codimension}},
        date={2015},
      eprint={http://arxiv.org/abs/1501.07109},
      status={Preprint},
}

\bib{Taylor:1976}{article}{
      author={Taylor, J.~E.},
       title={{The structure of singularities in soap-bubble-like and
  soap-film-like minimal surfaces}},
        date={1976},
     journal={Ann. of Math},
      volume={103},
       pages={489\ndash 539},
}

\end{biblist}
\end{bibdiv}
\Addresses
\end{document}